\newtheorem{theorem}{Theorem}[section]
\newtheorem{conjecture}[theorem]{Conjecture}
\newtheorem{lemma}[theorem]{Lemma}
\newtheorem{proposition}[theorem]{Proposition}
\newtheorem{corollary}[theorem]{Corollary}
\newtheorem{problem}[theorem]{Problem}
\theoremstyle{definition}
\newtheorem{definition}[theorem]{Definition}
\newtheorem{remark}[theorem]{Remark}
\newtheorem{example}[theorem]{Example}
\renewcommand{\Bbb}{\mathbb}
\begin{document}

\title{On rational homology projective planes with quotient singularities of small indices}

\author{Woohyeok Jo}
\address{Department of Mathematical Sciences, Seoul National University, Seoul 08826,
 Republic of Korea}
\email{koko1681@snu.ac.kr}

\author{Jongil Park}
\address{Department of Mathematical Sciences and Research Institute of Mathematics, Seoul National University, Seoul 08826, Republic of Korea}
\email{jipark@snu.ac.kr}

\author{Kyungbae Park}
\address{Department of Mathematics, Kangwon National University, Kangwon 24341,
 Republic of Korea}
\email{kyungbaepark@kangwon.ac.kr}

\thanks{}
\subjclass[2020]{14J17, 14J25, 32S25}
\keywords{rational homology projective planes, quotient singularity, Donaldson's diagonalization theorem, Heegaard Floer $d$-invariants}
\date{\today}

\begin{abstract}
In this article, we study the effects of topological and smooth obstructions on the existence of rational homology complex projective planes that admit quotient singularities of small indices. In particular, we provide a classification of the types of quotient singularities that can be realized on rational homology complex projective planes with indices up to three, whose smooth loci have trivial first integral homology group.
\end{abstract}

\maketitle

\section{Introduction}
A normal projective complex surface $S$ whose Betti numbers $b_i(S)$ are the same as those of the complex projective plane $\mathbb{CP}^2$ is called a \textit{rational homology projective plane} (or a $\mathbb{Q}$-homology $\mathbb{CP}^2$). Following the convention of \cite{Hwang-Keum-Ohashi-2015}, we assume, throughout this paper,
that a $\mathbb{Q}$-homology $\mathbb{CP}^2$ has at worst quotient singularities. 

For a $\mathbb{Q}$-homology $\mathbb{CP}^2$, either $\pm K$ is ample or $K$ is numerically trivial, where $K$ denotes the canonical divisor. A $\mathbb{Q}$-homology $\mathbb{CP}^2$ with an anti-ample canonical divisor is a log del Pezzo surface of Picard number one, while one with a numerically trivial canonical divisor is a log Enriques surface of Picard number one \cite{Keum-2018}. 

The following problem was raised by Koll\'ar:

\begin{problem}[{\cite[Problem 26]{Kollar-2008}}]\label{prob:Kollar}
    Classify all $\mathbb{Q}$-homology $\mathbb{CP}^2$'s with quotient singularities. 
\end{problem}

Although Koll\'ar expressed doubts about the feasiblity of this endeavor, there has been significant progress, particularly when the canonical divisor is not ample. In the case where the canonical divisor is anti-ample, the classification of log del Pezzo surfaces of Picard number 1 has been established \cite{Lacini-2024}. Some results are also available for the cases where the canonical divisor is numerically trivial \cite{Zhang-1991, Schutt-2023}. However, very little is known about situations where the canonical class is ample \cite{Hwang-Keum-2012}.

A $\mathbb{Q}$-homology $\mathbb{CP}^2$ whose smooth locus is simply-connected is of particular interest due to the following conjecture, called the \emph{algebraic Montgomery-Yang problem}. This conjecture represents a special case of Problem \ref{prob:Kollar} and serves as a key motivation for this paper. 

\begin{conjecture}[Algebraic Montgomery-Yang Problem, {\cite[Conjecture 30]{Kollar-2008}}]\label{conj:aMY} Let $S$ be a $\mathbb{Q}$-homology $\mathbb{CP}^2$ with quotient singularities. If its smooth locus $S^0:= S \setminus \textup{Sing}(S)$ is simply-connected, then $S$ has at most $3$ singularities.
\end{conjecture}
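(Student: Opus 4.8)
The plan is to prove the conclusion directly as a bound on the number $n$ of singular points, feeding the global constraints of a $\mathbb{Q}$-homology $\mathbb{CP}^2$ into the orbifold Bogomolov--Miyaoka--Yau inequality. Since $S$ has the Betti numbers of $\mathbb{CP}^2$, its topological Euler number is $e(S)=3$, so, writing $\Gamma_i=\pi_1(L_i)$ for the local fundamental group at $p_i$ (the link $L_i$ being a spherical space form), the orbifold Euler number is
$$ e_{\mathrm{orb}}(S)=3-\sum_{i=1}^{n}\Bigl(1-\frac{1}{|\Gamma_i|}\Bigr). $$
The orbifold BMY inequality, valid for $\mathbb{Q}$-homology projective planes, forces $e_{\mathrm{orb}}(S)>0$, and as each $|\Gamma_i|\ge 2$ gives $1-1/|\Gamma_i|\ge\tfrac12$, I obtain $n<6$, that is $n\le 5$. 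The whole problem therefore reduces to excluding $n=4$ and $n=5$ under the hypothesis $\pi_1(S^0)=1$.

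To exploit simple connectivity, I would remove disjoint cone neighborhoods of the $p_i$ to obtain a compact smooth $4$-manifold $W$ with $\partial W=\bigsqcup_i L_i$ and $\pi_1(W)\cong\pi_1(S^0)=1$; its second rational Betti number equals one and its intersection form is positive definite of rank one. Capping each boundary lens space off with the negative-definite plumbing $P_i$ furnished by the minimal resolution recovers the smooth closed surface $\widetilde S=W\cup\bigl(\bigsqcup_i P_i\bigr)$, which is simply connected with $p_g=q=0$, $b_2^+(\widetilde S)=1$, and $b_2^-(\widetilde S)=N:=\sum_i\operatorname{rk}\Lambda_i$, where $\Lambda_i$ is the plumbing lattice of $P_i$. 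For a cyclic singularity $\tfrac1{p_i}(1,q_i)$ this $\Lambda_i$ is read directly off the Hirzebruch--Jung continued fraction of $p_i/q_i$. Donaldson's diagonalization theorem, applied to $\widetilde S$, forces its intersection form to be the standard diagonal form $\langle 1\rangle\oplus\langle -1\rangle^{N}$, so that the negative-definite lattice $\bigoplus_i\Lambda_i$ must embed isometrically into $\langle -1\rangle^{N}$. This embedding constraint, together with the linking-form data carried by the $L_i$, is the main source of obstructions.

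I would then split into two cases. If some $\Gamma_i$ is non-cyclic, its order is at least $8$, so $1-1/|\Gamma_i|\ge 7/8$, and the BMY bound $\sum_i(1-1/|\Gamma_i|)<3$ sharply restricts the admissible collections of orders; a finite check of whether the associated plumbing lattices embed into a diagonal lattice then eliminates $n=4,5$. The genuinely difficult case is that \emph{all} singularities are cyclic, for here BMY imposes no upper bound on the individual indices $p_i$, leaving an infinite family of numerical candidates. In this regime I would reinforce the purely topological Donaldson obstruction with the smooth obstruction coming from Heegaard Floer correction terms: because each $L_i$ appears as a boundary component of the positive-definite manifold $W$, the $d$-invariants $d(L_i,\mathfrak s)$ must jointly satisfy the Ozsv\'ath--Szab\'o inequalities for bounding a positive-definite $4$-manifold with $H_1=0$, a rigidity much stronger than the lattice embedding alone and sensitive to the fine arithmetic of the pairs $(p_i,q_i)$.

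The main obstacle is precisely this last step: showing that the combined diagonalization and $d$-invariant constraints fail for \emph{every} admissible cyclic configuration with $n=4$ or $n=5$, uniformly over the unbounded index data, rather than only for indices below some threshold. The difficulty is structural, since both obstructions are computed from continued-fraction expansions whose length and entries grow with the $p_i$, so that a naive case-by-case analysis never terminates. Closing the conjecture therefore hinges on extracting from these expansions a single monotone invariant that is forced to violate the positive-definite bound in all but finitely many configurations, thereby collapsing the infinite family to a finite, explicitly checkable list.
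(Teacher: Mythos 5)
There is no ``paper's own proof'' to compare against: the statement you were asked to prove is Conjecture \ref{conj:aMY}, Koll\'ar's algebraic Montgomery--Yang problem, which the paper explicitly presents as open. It has been confirmed by Hwang and Keum in every case \emph{except} when $S$ is a rational surface with ample canonical divisor and only cyclic singularities \cite{Hwang-Keum-2011-1, Hwang-Keum-2013, Hwang-Keum-2014}, and the strongest known result in the remaining case \cite{Jo-Park-Park-2024} only shows that four cyclic singularities force local fundamental groups of orders $2,3,5,n$ with $n\geq 2599$ --- it does not exclude them. Your proposal honestly reflects this: your final paragraph does not close the argument but instead names the unresolved core (eliminating \emph{all} cyclic configurations with $n=4,5$ uniformly over unbounded orders, where the continued-fraction data grows without bound) and defers it to a hoped-for ``monotone invariant'' that you never construct. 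That deferred step \emph{is} the conjecture; everything before it is a reduction already known in the literature. So the proposal is a research program, not a proof, and the gap is not incidental but coincides exactly with the open problem.

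Two steps you present as settled also contain errors. First, the orbifold BMY inequality (Theorem \ref{thm:oBMY}) gives only $e_{\textup{orb}}(S)\geq 0$, not a strict inequality, so with $1-1/|\Gamma_i|\geq \tfrac12$ you get $n\leq 6$, not $n\leq 5$; the bound of five singularities is a substantially harder theorem (Theorem \ref{thm:5_quot_sing}, due to Belousov and Hwang--Keum) and does not follow from BMY plus the order bound alone. Second, Donaldson's theorem applies to \emph{definite} closed $4$-manifolds, so you cannot apply it to $\widetilde S$, which has $b_2^+=1$; that its intersection form is $\langle 1\rangle\oplus\langle -1\rangle^{N}$ is just the algebraic classification of odd indefinite unimodular forms and carries no gauge-theoretic content. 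Moreover, your extracted constraint --- that $\bigoplus_i\Lambda_i$ embeds isometrically into $\langle -1\rangle^{N}$ --- is too strong and unjustified: a rank-$N$ negative definite sublattice of $\langle 1\rangle\oplus\langle -1\rangle^{N}$ need not lie in the diagonal negative summand. The correct mechanism (the paper's Corollary \ref{cor:Donaldson} and Corollary \ref{cor:orthogonal_complement}) is to reverse the orientation of $W$ and cap its boundary components $-L_i$ with negative definite fillings $X(p_i,p_i-q_i)$ --- note these are \emph{not} the resolution plumbings $P_i$, which fill $+L_i$ --- producing a closed negative definite manifold to which Donaldson applies, yielding an embedding of $\bigoplus_i Q_{X_i}$ into $-\mathbb{Z}^{N+1}$ (one rank higher) together with a condition on the square of the generator of the orthogonal complement. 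These are repairable, but as written your lattice obstruction would reject configurations it should not, and your $d$-invariant step is only sketched at the level of ``Ozsv\'ath--Szab\'o inequalities must hold,'' which, as the $n\geq 2599$ bound shows, is known to be insufficient by itself to finish the cyclic case.
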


Conjecture \ref{conj:aMY} has been confirmed by Hwang and Keum, except when $S$ is a rational surface with an ample canonical divisor and only cyclic singularities \cite{Hwang-Keum-2011-1, Hwang-Keum-2013, Hwang-Keum-2014}. Recently, the authors showed that if $S$ is a $\mathbb{Q}$-homology $\mathbb{CP}^2$ whose smooth locus is simply-connected and contains four cyclic singularities, then the orders of the local fundamental groups at the singularities must be $2$, $3$, $5$, and $n$ with $n\geq 2599$ \cite{Jo-Park-Park-2024}.

It is natural to pose the following problem, which generalizes Conjecture \ref{conj:aMY}, and serves as a special case of Problem \ref{prob:Kollar}.

\begin{problem}\label{prob:simply_connected} Classify all $\mathbb{Q}$-homology $\mathbb{CP}^2$'s with quotient singularities whose smooth locus is simply-connected.     
\end{problem}

Alternatively, one may consider the following generalization of Problem \ref{prob:simply_connected}.

\begin{problem}\label{prob:trivial_H1} Classify all $\mathbb{Q}$-homology $\mathbb{CP}^2$'s with quotient singularities whose smooth locus has trivial first integral homology group.     
\end{problem}

It is worth noting that there exists a $\mathbb{Q}$-homology $\mathbb{CP}^2$ with quotient singularities whose smooth locus has trivial first integral homology group but nontrivial fundamental group \cite{Hwang-Keum-2011-1}. However, we are not aware of any such examples when all the singularities are cyclic.

For a normal projective surface $S$ with quotient singularities, the \textit{index} of a singularity $p\in \textrm{Sing}(S)$ is the smallest positive integer $k$ such that the divisor $kK_S$ is Cartier in a neighborhood of $p$. Note that $(S,p)$ is locally analytically isomorphic to $(\mathbb{C}^2/G_p,0)$ for some finite subgroup $G_p\subset \textrm{GL}(2,\mathbb{C})$, which acts freely on $\mathbb{C}^2\setminus\{0\}$. The index of $p$ is equal to the index $[G_p:G_p\cap \textrm{SL}(2,\mathbb{C})]$. The \textit{index} of $S$ is the smallest positive integer $k$ such that $kK_S$ is Cartier, and it is equal to the least common multiple of the indices of all singularities of $S$.

Rational homology projective planes with small indices have been studied by several authors. For example, log del Pezzo $\mathbb{Q}$-homology $\mathbb{CP}^2$'s of index one have been classified in \cite{Furushima-1986, Miyanishi-Zhang-1988, Ye-2002}, those of index two in \cite{Alexeev-Nikulin-1988, Alexeev-Nikulin-1989, Alexeev-Nikulin-2006, Kojima-2003}. Log Enriques $\mathbb{Q}$-homology $\mathbb{CP}^2$'s of index one have been classified in \cite{Hwang-Keum-Ohashi-2015, Schutt-2023}.

The aim of this article is to study $\mathbb{Q}$-homology $\mathbb{CP}^2$'s (without imposing any condition on the canonical divisor) of indices $\leq 3$. In particular, we provide a complete classification of the singularity types of $\mathbb{Q}$-homology $\mathbb{CP}^2$'s of indices one and two whose smooth locus has trivial first integral homology group or is
simply-connected. These classification results offer solutions to Problem \ref{prob:simply_connected} and Problem \ref{prob:trivial_H1} in terms of singularity types for these indices.

\subsection*{Singularity types of $\mathbb{Q}$-homology $\mathbb{CP}^2$'s of index one}

A $\mathbb{Q}$-homology $\mathbb{CP}^2$ of index one is called \textit{Gorenstein}. A quotient singularity is of index one if and only if it is a rational double point, i.e., a singularity of type  $A_n$, $D_n$, or $E_n$. Therefore, all singularities of a Gorenstein $\mathbb{Q}$-homology $\mathbb{CP}^2$ are rational double points. For example, we say that the singularity type of a $\mathbb{Q}$-homology $\mathbb{CP}^2$ is $2A_33A_1$ if it has two singularities of type $A_3$ and three singularities of type $A_1$. The singularity types of Gorenstein $\mathbb{Q}$-homology $\mathbb{CP}^2$'s are classified as follows.

\begin{theorem}[{\cite[Theorem 1.1]{Hwang-Keum-Ohashi-2015}}]\label{thm:Gorenstein_list} The singularity type of a Gorenstein $\mathbb{Q}$-homology $\mathbb{CP}^2$ is one of the following $58$ types: \begin{enumerate}[label=\textup{(\arabic*)}]
    \item $K\not\equiv 0$ $(27$ types$):$ $A_8$, $A_7A_1$, $A_5A_2A_1$, $2A_4$, $2A_32A_1$, $4A_2$, $E_8$, $E_7A_1$, $E_6A_2$, $D_8$, $D_62A_1$, $D_5A_3$, $2D_4$, $A_7$, $A_5A_2$, $2A_3A_1$, $E_7$, $D_6A_1$, $D_43A_1$, $A_5A_1$, $3A_2$, $E_6$, $A_32A_1$, $D_5$, $A_4$, $A_2A_1$, $A_1$.  
    \item  $K\equiv 0$ $(31$ types$):$ $A_9$, $A_8A_1$, $A_7A_2$, $A_72A_1$, $A_6A_2A_1$, $A_5A_4$, $A_5A_3A_1$, $A_52A_2$, $A_5A_22A_1$, $2A_4A_1$, $A_4A_32A_1$, $3A_3$, $2A_3A_2A_1$, $2A_33A_1$, $A_33A_2$, $D_9$, $D_8A_1$, $D_72A_1$, $D_6A_3$, $D_6A_2A_1$, $D_63A_1$, $D_5A_4$, $D_5A_3A_1$, $D_5D_4$, $D_4A_32A_1$, $2D_4A_1$, $E_8A_1$, $E_7A_2$, $E_72A_1$, $E_6A_3$, $E_6A_2A_1$.
\end{enumerate}
\end{theorem}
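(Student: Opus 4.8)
The statement is a classification, so my plan is to pass to the minimal resolution, translate the problem into a lattice-theoretic enumeration on a smooth surface, and then decide geometric realizability of each candidate. Let $\pi \colon \tilde S \to S$ be the minimal resolution. Since every index-one quotient singularity is a rational double point, $\pi$ is crepant, so $K_{\tilde S} = \pi^* K_S$, and in particular $K_{\tilde S}^2 = K_S^2$. The exceptional locus over each singular point $p_i$ is a configuration of $(-2)$-curves whose dual graph is the Dynkin diagram of its $A$-$D$-$E$ type, spanning a negative-definite root lattice $R_i$ of rank $r_i$. Rational double points are rational singularities, so $b_1(\tilde S) = b_1(S) = 0$, $\chi(\mathcal{O}_{\tilde S}) = \chi(\mathcal{O}_S)$, and $b_2(\tilde S) = 1 + \sum_i r_i$. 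Because $S$ has Picard number one while the $(-2)$-curves are algebraic and linearly independent, one gets $\rho(\tilde S) = \sum_i r_i + 1 = b_2(\tilde S)$; that is, all of $H^2(\tilde S;\mathbb{Q})$ is algebraic and the root lattice $R = \bigoplus_i R_i$ has corank one in $\mathrm{NS}(\tilde S)$.

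Next I would split along the canonical class: either $K_S \equiv 0$ or $\pm K_S$ is ample. In the first case $K_{\tilde S} \equiv 0$ with $b_1(\tilde S) = 0$, so by the Enriques--Kodaira classification $\tilde S$ is a K3 or an Enriques surface; but $\rho(\tilde S) = b_2(\tilde S)$ is impossible for a K3 (where $\rho \le 20 < 22$), so $\tilde S$ must be an Enriques surface, forcing $b_2(\tilde S) = 10$ and $\sum_i r_i = 9$. In the ample case I would first rule out $K_S$ ample, i.e.\ exclude a Gorenstein rational-homology plane of general type, using the orbifold Bogomolov--Miyaoka--Yau inequality together with Noether-type constraints for the pair $(\tilde S, R)$. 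This leaves $-K_S$ ample, so $-K_{\tilde S} = -\pi^* K_S$ is nef and big and $\tilde S$ is a rational weak del Pezzo surface of degree $d = K_S^2$, with $\rho(\tilde S) = 10 - d$ and hence $\sum_i r_i = 9 - d$.

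In both regimes the problem reduces to listing the $A$-$D$-$E$ root lattices $R = \bigoplus_i R_i$ that embed with corank one into the relevant ambient lattice and are geometrically realized. For the del Pezzo surfaces, $R$ is a root subsystem of full rank $9-d$ inside the del Pezzo root system $K_{\tilde S}^\perp$, which is of type $E_{9-d}$ (with the conventions $E_5 = D_5$, $E_4 = A_4$). Enumerating the full-rank root subsystems of $E_8, E_7, E_6, D_5, A_4, \dots$ is a Borel--de Siebenthal computation via deletions from the extended Dynkin diagram; for instance the rank-$8$ subsystems of $E_8$ reproduce exactly the degree-one entries $A_8, E_8, A_7A_1, D_8, 2D_4, 4A_2, \dots$ of the list. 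For the Enriques case, $R$ is a rank-$9$ root sublattice of the Enriques lattice $U \oplus E_8(-1)$ with corank-one complement, subject to the additional $2$-elementary and period constraints intrinsic to Enriques surfaces. Nikulin's theory of primitive embeddings and discriminant forms decides which abstract configurations occur, and the requirement that the contraction be a genuine rational homology $\mathbb{CP}^2$ (not merely Picard number one) becomes a condition on the discriminant of the orthogonal complement; realizability is then established by explicit anticanonical or blow-up models (del Pezzo) or by surjectivity of the period map (Enriques).

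The main obstacle is the simultaneous arithmetic-and-geometric matching. The list of full-rank root subsystems is combinatorially large, and the delicate point is twofold: first, imposing the precise discriminant-form condition that pins down exactly when the contracted surface is a rational homology $\mathbb{CP}^2$; and second, in the Enriques case, excluding those configurations that embed abstractly into $U \oplus E_8(-1)$ yet cannot be realized by $(-2)$-curves on any Enriques surface, where the obstructions come from the canonical involution and the geometry of nodal Enriques surfaces. Arriving at exactly these $58$ types, with none omitted and none spurious, is where the real difficulty lies.
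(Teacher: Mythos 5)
This theorem is not proved in the paper at all: it is quoted verbatim from \cite[Theorem 1.1]{Hwang-Keum-Ohashi-2015}, so your proposal can only be judged against the cited classification and against known examples. Judged that way, it contains one fatal gap: the step where you ``rule out $K_S$ ample.'' Gorenstein $\mathbb{Q}$-homology projective planes with \emph{ample} canonical class do exist. The quotient of a fake projective plane by an order-$3$ automorphism group is a Gorenstein $\mathbb{Q}$-homology $\mathbb{CP}^2$ with ample $K$ and exactly three singular points of type $\frac{1}{3}(1,2)=A_2$ (type $3A_2$), and quotients by automorphism groups of order $9$ give type $4A_2$ (Keum, \emph{Quotients of fake projective planes}); both types appear in part (1) of the statement. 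Moreover, these examples pass every test you propose to use against them: for $3A_2$ one has $K_S^2=9-6=3$ and $3e_{\textup{orb}}(S)=3\bigl(3-3\cdot\tfrac{2}{3}\bigr)=3$, and for $4A_2$ one has $K_S^2=1=3e_{\textup{orb}}(S)$, so the orbifold BMY inequality (Theorem \ref{thm:oBMY}(1)) holds with equality (these are ball-quotient orbifolds), and their minimal resolutions are minimal surfaces of general type with $p_g=q=0$ violating no Noether-type inequality. Consequently the trichotomy cannot be collapsed to ``Enriques or weak del Pezzo'': the $K\not\equiv 0$ half of the classification must treat the ample case as genuinely occurring and prove that every singularity type arising there already lies in the $27$-type list. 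That is precisely the delicate part of the cited proof, which constrains the ample case by the same arithmetic conditions this paper uses elsewhere --- the rank bound $\sum_i r_i\leq 8$ coming from $K_S^2=9-\sum_i r_i>0$, the requirement that $K_S^2\cdot\prod_p|\det(R_p)|$ be a square (Lemma \ref{lem:canonical_divisor}(2)), and orbifold BMY for nef $K$ --- rather than by eliminating it.

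The remainder of your outline is structurally sound and consistent with how the cited classification proceeds: crepancy of the minimal resolution, $\rho(\tilde S)=b_2(\tilde S)$ forcing $p_g=0$, the Enriques alternative when $K\equiv 0$ (K3 is correctly excluded), Borel--de Siebenthal enumeration plus a realizability analysis in the weak del Pezzo case, and Nikulin-style lattice theory plus realizability of $(-2)$-curve configurations in the Enriques case. You also correctly flag that realizability is where much of the labor sits; for instance $8A_1$ and $D_44A_1$ are full-rank root subsystems of $E_8$ that even satisfy the discriminant-square condition, yet do not occur in the list. But as written your argument proves the wrong statement in the ample regime, and no repair along the lines you indicate (BMY plus Noether constraints) can succeed, since the known ample examples satisfy all of those inequalities.
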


The list is divided into two non-overlapping sublists based on whether the canonical class $K$ is numerically trivial or not. It is known that each of these 58 types is realizable (see \cite{Miyanishi-Zhang-1988} for the case $K\not\equiv 0$, and \cite{Schutt-2023} for the case $K\equiv 0$). 

In the case where $-K$ is ample (the log del Pezzo case), the singularity types that can be realized by a Gorenstein $\mathbb{Q}$-homology $\mathbb{CP}^2$ with simply-connected smooth locus have been classified:

\begin{theorem}[{\cite{Miyanishi-Zhang-1988, Gurjar-Pradeep-Zhang-2002}}]\label{thm:del_Pezzo_Gorenstein} The singularity type of a log del Pezzo Gorenstein $\mathbb{Q}$-homology $\mathbb{CP}^2$ with simply-connected smooth locus is one of the following seven types, each of which is realizable: \[
E_8, ~E_7,~ E_6,~ D_5, ~A_4,~ A_2A_1, ~A_1.
\]
\end{theorem}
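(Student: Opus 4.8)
The plan is to pass to the minimal resolution, rephrase the triviality of $\pi_1(S^0)$ as a lattice condition on the exceptional configuration, and then treat realizability and the passage from homology to homotopy separately.

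First I would set up the resolution. Let $\pi\colon \tilde S\to S$ be the minimal resolution. Since all singularities are rational double points they are crepant, so $K_{\tilde S}=\pi^{*}K_S$ and $-K_{\tilde S}$ is nef and big; hence $\tilde S$ is a weak del Pezzo surface of degree $d=K_S^{2}\in\{1,\dots,9\}$. In particular $\tilde S$ is a smooth rational (so simply connected) surface and $H^{2}(\tilde S;\mathbb Z)$ is a unimodular lattice of rank $10-d$. The exceptional curves are $(-2)$-curves; they generate a negative-definite root lattice $R\subseteq K_{\tilde S}^{\perp}$ whose ADE type is exactly the singularity type of $S$, and the hypothesis $\rho(S)=1$ forces $\operatorname{rank}R=9-d=\operatorname{rank}K_{\tilde S}^{\perp}$, so $R\subseteq K_{\tilde S}^{\perp}$ is a finite-index inclusion of lattices of the same rank. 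By Theorem~\ref{thm:Gorenstein_list} the type of $R$ already lies in the finite list of $27$ types with $K\not\equiv 0$, which we then screen.

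The key step is a homological computation. Identifying $S^{0}$ with $\tilde S\setminus E$, where $E=\bigcup_i E_i$ is the total exceptional divisor, and using that $\tilde S$ is a closed simply connected $4$-manifold, the long exact sequence of the pair $(\tilde S,S^{0})$ together with Lefschetz duality $H_*(\tilde S,S^{0})\cong H^{4-*}(E)$ gives
\[
H_1(S^{0};\mathbb Z)\cong\operatorname{coker}\bigl(H_2(\tilde S;\mathbb Z)\xrightarrow{\,\alpha\mapsto(\alpha\cdot E_i)_i\,}R^{*}\bigr)\cong K_{\tilde S}^{\perp}/R,
\]
an abelian group of order $[K_{\tilde S}^{\perp}:R]$. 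Thus $H_1(S^{0})=0$ if and only if the root lattice $R$ fills out all of $K_{\tilde S}^{\perp}$. Comparing discriminants — $|\mathrm{disc}\,R|$ is the product of the Cartan determinants of the ADE factors while $|\mathrm{disc}\,K_{\tilde S}^{\perp}|=d$, and $|\mathrm{disc}\,R|=[K_{\tilde S}^{\perp}:R]^{2}\,d$ — the equality $R=K_{\tilde S}^{\perp}$ can hold only when $K_{\tilde S}^{\perp}$ is itself a root lattice with discriminant $d$. Running through the degrees, this occurs precisely for $K_{\tilde S}^{\perp}\cong E_8,E_7,E_6,D_5,A_4,A_2\oplus A_1,A_1$ at $d=1,2,3,4,5,6,8$ respectively, whereas for $d=7$ (and the smooth case $d=9$) no full-rank root sublattice exists. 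This yields necessity: the seven listed types are the only candidates.

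For sufficiency I would exhibit, for each of the seven types, a weak del Pezzo surface of the appropriate degree whose $(-2)$-curves realize $R=K_{\tilde S}^{\perp}$ and whose contraction has Picard number one (the classical constructions of Miyanishi--Zhang; e.g.\ the quadric cone $\mathbb P(1,1,2)$ for $A_1$), so that $H_1(S^{0})=0$ holds automatically by the formula above. The genuinely delicate point — and where I expect the main obstacle to lie — is upgrading $H_1(S^{0})=0$ to $\pi_1(S^{0})=1$, since, as the introduction already notes, trivial first homology need not force simple-connectivity. Here I would invoke the known finiteness of $\pi_1(S^{0})$ for log del Pezzo surfaces, pass to the universal cover $\widehat{S^{0}}\to S^{0}$, extend it to a finite cover $\hat S\to S$ of normal projective surfaces that is \'etale over $S^{0}$ and ramified only at the singular points, and observe that $\hat S$ is again log del Pezzo. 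Comparing its Euler characteristic, degree, and orbifold data against those of $S$ (using $e(S)=3$ and the orders of the local fundamental groups at the ADE points) should force the covering degree to be $1$. Ruling out every such nontrivial cover, configuration by configuration for $E_8,\dots,A_1$, is the step where the purely homological criterion no longer suffices and the explicit geometry must enter.
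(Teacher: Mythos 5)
A preliminary remark on the comparison: the paper does not prove this theorem at all --- it is imported from Miyanishi--Zhang and Gurjar--Pradeep--Zhang, and the paper's own machinery (linking forms, Donaldson's theorem, $d$-invariants) is reserved for the generalization (Theorem \ref{thm:Gorenstein_H1=0}). So your proposal can only be measured against the literature it is reconstructing. On that score, your \emph{necessity} argument is correct and complete: since $\pi_1(S^0)=1$ implies $H_1(S^0;\mathbb{Z})=0$, your Lefschetz-duality identification $H_1(S^0;\mathbb{Z})\cong K_{\tilde S}^{\perp}/R$ is valid (the saturation of $R$ in the unimodular lattice $H_2(\tilde S;\mathbb{Z})$ is exactly $K_{\tilde S}^{\perp}$, because $R^{\perp}$ has rank one and contains $K_{\tilde S}$), and the degree-by-degree screening then yields the seven types. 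One slip in the bookkeeping: $|\mathrm{disc}\,K_{\tilde S}^{\perp}|=d$ holds only when $K_{\tilde S}$ is primitive in $H^2(\tilde S;\mathbb{Z})$, i.e.\ when $\tilde S$ is a blow-up of $\mathbb{P}^2$; it fails exactly in degree $8$, where the relevant surface is $\Sigma_2$ and $|\mathrm{disc}\,K_{\tilde S}^{\perp}|=2$. Your own list is internally inconsistent at this point, since $A_1$ has discriminant $2$, not $8$; the conclusion ($A_1$ at $d=8$) survives, but this case needs to be separated out.

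The genuine gap is where you suspected it, and your proposed repair cannot close it. For \emph{realizability} you must produce, for each of the seven types, a surface with $\pi_1(S^0)=1$, not merely $H_1(S^0)=0$, and the covering-space argument you sketch does not force the covering degree to be $1$: the orbifold Euler characteristic is \emph{exactly} multiplicative under a finite cover \'etale over $S^0$, so $e_{\textup{orb}}(\hat S)=n\,e_{\textup{orb}}(S)$ produces no contradiction by itself --- $\hat S$ is a log del Pezzo surface of possibly large Picard rank, and there is no a priori upper bound on $e_{\textup{orb}}(\hat S)$ without first bounding $b_2(\hat S)$. (For the cubic surface with three $A_2$ points, whose smooth locus has $\pi_1\cong\mathbb{Z}_3$, this computation is perfectly consistent with the degree-$3$ cover by $\mathbb{P}^2$; only perfectness of $\pi_1$ distinguishes your situation, and that alone does not enter an Euler-characteristic count.) Proving that $H_1(S^0)=0$ forces $\pi_1(S^0)=1$ for these surfaces is precisely the content of the cited Gurjar--Pradeep--Zhang paper and is a long geometric argument, not a numerical one. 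The efficient way to finish within your framework is to sidestep that implication entirely: for each of the seven types take an explicit realization (Miyanishi--Zhang's constructions, e.g.\ $\mathbb{P}(1,1,2)$ for $A_1$, or blow-ups of Hirzebruch surfaces as in the paper's Section \ref{sec:index3_case}) whose smooth locus visibly contains $\mathbb{C}^2$ --- the complement of a fiber union a section in $\Sigma_n$ --- as a Zariski-open subset; simple connectivity then follows from the standard transversality argument, and necessity is already covered by your lattice computation.
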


Our first result is to show that these are the only singularity types that can be realized by a (not necessarily log del Pezzo) Gorenstein $\mathbb{Q}$-homology $\mathbb{CP}^2$ whose smooth locus has trivial first integral homology group. 

\begin{theorem}\label{thm:Gorenstein_H1=0} The singularity type of a Gorenstein $\mathbb{Q}$-homology $\mathbb{CP}^2$ whose smooth locus has trivial first integral homology group is one of the following seven types, each of which is realizable: \[
E_8, ~E_7,~ E_6,~ D_5, ~A_4,~ A_2A_1, ~A_1.
\]
\end{theorem}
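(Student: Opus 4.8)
My plan is to test the complete list of $58$ Gorenstein singularity types from Theorem~\ref{thm:Gorenstein_list} against the hypothesis $H_1(S^0;\mathbb{Z})=0$ and to show that exactly the seven types of Theorem~\ref{thm:del_Pezzo_Gorenstein} survive; realizability of these seven is then immediate from that theorem, since a simply-connected smooth locus has trivial first homology. The first and most important step is to convert the homological hypothesis into lattice data on the minimal resolution $\pi\colon\tilde S\to S$. As every singularity is a rational double point, $\pi$ is crepant and the exceptional curves over $p_i$ span the negative-definite root lattice $R_i$ of the corresponding $ADE$ type; set $R=\bigoplus_i R_i$, of rank $r=b_2(\tilde S)-1$. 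Writing $\tilde S=S^0\cup(\sqcup_i P_i)$ as the union of the smooth locus with the (simply-connected) resolution plumbings $P_i$, a van Kampen argument shows $H_1(\tilde S)$ is a quotient of $H_1(S^0)$, so $H_1(S^0)=0$ forces $H_1(\tilde S)=0$ and hence $H_2(\tilde S;\mathbb{Z})$ is a \emph{unimodular} lattice of signature $(1,r)$. Chasing the long exact sequence of the pair $(\tilde S,S^0)$ and using unimodularity then identifies $H_1(S^0)$ with $\operatorname{Tors}H_2(S)$, where $H_2(S)\cong H_2(\tilde S)/R$; thus $H_1(S^0)=0$ is \emph{equivalent} to $R$ being a primitive sublattice of $H_2(\tilde S)$.

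With this dictionary in place I would first dispose of the $31$ types with $K\equiv 0$. For such $S$ the resolution satisfies $p_g=q=0$ and $\kappa(\tilde S)=0$, so the minimal model of $\tilde S$ is an Enriques surface; as blow-ups preserve $H_1$ and an Enriques surface has $H_1=\mathbb{Z}/2$, this gives $H_1(\tilde S)=\mathbb{Z}/2\neq 0$, contradicting the step above. (Equivalently, $K_S$ is a nonzero torsion class in $\operatorname{Pic}(S)$ whose restriction to $S^0$ defines a nontrivial connected cyclic étale cover, so $H_1(S^0)\neq 0$.) No type with $K\equiv 0$ can therefore occur.

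For the $27$ types with $K\not\equiv 0$ I would apply Nikulin's theory of primitive embeddings. A primitive embedding of the negative-definite $R$ into a unimodular lattice of signature $(1,r)$ has a rank-one orthogonal complement $\langle m\rangle$ and must satisfy $d(R)\cong -d(\langle m\rangle)$ on discriminant forms; in particular the discriminant group $d(R)=\bigoplus_i d(R_i)$ must be \emph{cyclic}. Using $d(A_n)=\mathbb{Z}/(n{+}1)$, $d(D_{2k+1})=\mathbb{Z}/4$, $d(D_{2k})=(\mathbb{Z}/2)^2$, $d(E_6)=\mathbb{Z}/3$, $d(E_7)=\mathbb{Z}/2$, and $d(E_8)=0$, cyclicity alone eliminates all but $E_8,A_7,E_7,E_6,D_5,A_4,A_2A_1,A_1$. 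The lone extra candidate is $A_7$: here $|d(A_7)|=8$ forces the complement to be $\langle 8\rangle$, and an explicit computation of the quadratic refinement on $\mathbb{Z}/8$ shows $d(A_7)\not\cong -d(\langle 8\rangle)$ (a quadratic-residue obstruction modulo $16$); since no even unimodular lattice of signature $(1,7)$ exists, $A_7$ admits no primitive embedding and is excluded. The remaining seven types are precisely those realized by Picard-number-one del Pezzo surfaces, for which $H_2(S)$ is torsion-free, so the required primitive embeddings genuinely exist and the classification closes.

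It is worth recording how the announced tools enter, and where I expect the real difficulty to lie. The embedding obstruction can be packaged gauge-theoretically and uniformly: orientation-reversing $S^0$ gives a negative-definite rank-one filling of the disjoint union of links $L_i=S^3/G_i$, and capping it with negative-definite fillings of the reversed links produces a closed negative-definite $4$-manifold to which Donaldson's diagonalization theorem applies, while the Heegaard Floer $d$-invariants of the $L_i$ obstruct those configurations that embed at the lattice level but cannot bound the requisite smooth filling --- this is the formulation that survives to the non-$ADE$ singularities of higher index. I expect the main obstacle to be twofold: making the homological translation rigorous in the presence of torsion, especially the identification $H_1(S^0)\cong\operatorname{Tors}H_2(S)$ and its equivalence with primitivity; and handling the borderline discriminant-form computations, both the exclusion of $A_7$ and the parity subtlety of the $A_1$ case, whose resolution $\mathbb{F}_2$ is spin and forces the \emph{even} unimodular lattice $H$ rather than the odd diagonal one --- cases where orders alone do not decide the question and the quadratic refinements must be computed with care.
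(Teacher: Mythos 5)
Your framework is internally coherent and genuinely different from the paper's: you work inside the unimodular lattice $H_2(\tilde S;\mathbb{Z})$ of signature $(1,r)$, so that primitivity of the exceptional root lattice $R$ plus Nikulin's discriminant-form gluing (and the classification of \emph{indefinite} unimodular forms) replaces the paper's route, which instead cites Lemma \ref{lem:canonical_divisor}(1) to discard $K\equiv 0$ and non-coprime determinants, excises cone neighborhoods to get the positive definite manifold $W^0$, and then invokes Donaldson's theorem via Corollary \ref{cor:orthogonal_complement} to kill the leftover cases. Your re-derivation of the $K\equiv 0$ exclusion (Enriques minimal resolution, or the torsion-$K_S$ \'etale cover) and your exclusion of $A_7$ are both essentially correct.

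However, there is a genuine gap in the case analysis, and it occurs at exactly a type that requires real work. The discriminant group of $A_8$ is $\mathbb{Z}/9$, which \emph{is} cyclic, so $A_8$ survives your cyclicity filter; your claims that cyclicity eliminates all but $E_8, A_7, E_7, E_6, D_5, A_4, A_2A_1, A_1$ and that ``the lone extra candidate is $A_7$'' are both false. Indeed, the paper's own intermediate list of candidates is $A_8, E_8, D_8, A_7, E_7, E_6, D_5, A_4, A_2A_1, A_1$, and it devotes a separate lemma (an embedding argument for $Q_{X(9,1)}$ into $-\mathbb{Z}^2$, relying on Donaldson's theorem) to excluding $A_8$ together with $A_7$. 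As written, your proof never excludes $A_8$ and therefore does not establish the theorem. The omission is repairable within your own framework: since $1-8\not\equiv 0 \pmod 8$, the ambient lattice must be the odd unimodular $\mathbb{Z}^{1,8}$, the complement of a primitive $A_8$ would be $\langle 9\rangle$, and an anti-isometry of linking forms would require a unit $a$ with $a^2\equiv -1\equiv 8 \pmod 9$; as the squares of units mod $9$ are $\{1,4,7\}$, no such anti-isometry exists. But this computation (the exact analogue of your quadratic-residue argument for $A_7$) has to be carried out; without it the list of surviving types is not the asserted seven.
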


Furthermore, combining with Theorem \ref{thm:del_Pezzo_Gorenstein}, we obtain the following corollary. Therefore, Theorem \ref{thm:del_Pezzo_Gorenstein} holds even without the assumption that the canonical divisor is anti-ample.

\begin{corollary}\label{cor:Gorenstein_pi1=1} The singularity type of a Gorenstein $\mathbb{Q}$-homology $\mathbb{CP}^2$ with simply-connected smooth locus is one of the following seven types, each of which is realizable: \[
E_8, ~E_7,~ E_6,~ D_5, ~A_4,~ A_2A_1, ~A_1.
\]
\end{corollary}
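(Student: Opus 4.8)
The plan is to derive Corollary~\ref{cor:Gorenstein_pi1=1} as an immediate logical consequence of Theorem~\ref{thm:del_Pezzo_Gorenstein} together with the newly established Theorem~\ref{thm:Gorenstein_H1=0}, using only the elementary observation that simple-connectivity of a space implies the vanishing of its first integral homology group. The key point is that the two input theorems assert list memberships under two \emph{different} hypotheses, and the corollary combines them to remove the del Pezzo assumption entirely.

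First I would let $S$ be a Gorenstein \qhcp whose smooth locus $S^0$ is simply-connected. By the Hurewicz theorem (or simply by abelianization of the trivial fundamental group), $H_1(S^0;\mathbb{Z})\cong \pi_1(S^0)^{\mathrm{ab}} = 0$, so $S^0$ has trivial first integral homology group. Theorem~\ref{thm:Gorenstein_H1=0} then applies directly to $S$, and I conclude that the singularity type of $S$ lies in the list $E_8,\,E_7,\,E_6,\,D_5,\,A_4,\,A_2A_1,\,A_1$. This already establishes that no singularity type \emph{outside} this list can occur for such $S$, which is the nontrivial containment.

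Next I would address realizability: I must check that each of the seven listed types is actually realized by some Gorenstein \qhcp with \emph{simply-connected} (not merely $H_1 = 0$) smooth locus. This is supplied verbatim by Theorem~\ref{thm:del_Pezzo_Gorenstein}, which asserts that each of these seven types is realizable by a log del Pezzo Gorenstein \qhcp with simply-connected smooth locus. Since a log del Pezzo \qhcp is in particular a \qhcp, the examples furnished there serve as the required realizations, completing the reverse containment.

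I do not anticipate any genuine obstacle here, since the corollary is a formal amalgamation of the two theorems once one notes that ``simply-connected'' is a stronger hypothesis than ``$H_1=0$'' on the upper-bound side, while the realizing examples on the lower-bound side already satisfy the stronger property. The only point warranting a sentence of care is to verify that the upper bound from Theorem~\ref{thm:Gorenstein_H1=0} and the realizations from Theorem~\ref{thm:del_Pezzo_Gorenstein} concern the exact same seven-element list, so that the two containments coincide and the classification is sharp; this is immediate by inspection.
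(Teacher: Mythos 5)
Your proposal is correct and matches the paper's own argument: the paper likewise deduces the upper bound from Theorem~\ref{thm:Gorenstein_H1=0} (since $\pi_1(S^0)=1$ forces $H_1(S^0;\mathbb{Z})=0$) and obtains the realizations from the simply-connected examples behind Theorem~\ref{thm:del_Pezzo_Gorenstein} (via \cite{Miyanishi-Zhang-1988}). No gaps; nothing further is needed.
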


\subsection*{Singularity types of $\mathbb{Q}$-homology $\mathbb{CP}^2$'s of index two}
A quotient singularity is said to be of type $K_n$ if the reduced exceptional divisor of its minimal resolution consists of rational curves and has the weighted dual graph depicted in Figure \ref{fig:weighted_dual_graph_of_K_n}. A quotient singularity is of index 2 if and only if it is of type $K_n$ \cite[Section 1.2]{Alexeev-Nikulin-2006}. 

\begin{figure}[!t]
\centering
\begin{tikzpicture}[scale=1]
    \draw (-9,0) node{$K_1$:};
    
    \draw (-8,0) node[circle, fill, inner sep=1.2pt, black]{};
    
    \draw (-8,0) node[below]{$-4$};
    
    \draw (-3.5,0) node{$K_n$ $(n\geq 2)$:};
    \draw (-2,0) node[circle, fill, inner sep=1.2pt, black]{};
    \draw (-1,0) node[circle, fill, inner sep=1.2pt, black]{};
    \draw (1,0) node[circle, fill, inner sep=1.2pt, black]{};
    \draw (2,0) node[circle, fill, inner sep=1.2pt, black]{};
    
    \draw (-2,0) node[below]{$-3$};
    \draw (-1,0) node[below]{$-2$};
    \draw (1,0) node[below]{$-2$};
    \draw (2,0) node[below]{$-3$};
    
    \draw (0,0) node{$\cdots$};
    
    \draw (-2,0)--(-1,0) (-1,0)--(-0.5,0) (0.5,0)--(1,0)  (1,0)--(2,0) ;
    \draw [thick,decorate,decoration={mirror,brace,amplitude=3pt},xshift=0pt]
    	(-1,-0.5) -- (1,-0.5) node [black,midway,xshift=0pt,yshift=-12pt] 
    	{$n-2$};
\end{tikzpicture}
\caption{The weighted dual graph of a singularity of type $K_n$.}
\label{fig:weighted_dual_graph_of_K_n}
\end{figure}
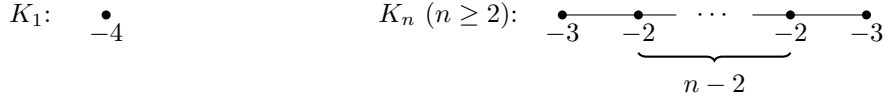

In contrast to the case of index 1, a complete classification of the singularity types that can occur on a $\mathbb{Q}$-homology $\mathbb{CP}^2$ of index 2 remains unknown. However, Alexeev and Nikulin provided a classification of log del Pezzo surfaces of index $2$ \cite{Alexeev-Nikulin-1988, Alexeev-Nikulin-1989, Alexeev-Nikulin-2006}. In particular, the types of singularities on log del Pezzo surfaces of index 2 and Picard number 1 are as follows.

\begin{theorem}[{\cite[Theorem 4.2]{Alexeev-Nikulin-2006}}]\label{thm:Alexeev-Nikulin} The singularity type of a log del Pezzo $\mathbb{Q}$-homology $\mathbb{CP}^2$ of index $2$ is one of the following $18$ types, each of which is realizable: 
$K_9$, $K_6A_2$, $K_5A_4$, $K_5$, $K_3A_5A_1$, $K_32A_2$, $K_2A_7$, $K_22A_3$, $K_2A_2$, $2K_1A_7$, $K_1D_8$, $K_1D_62A_1$, $K_1D_5A_3$, $K_12D_4$, $K_1A_7$, $K_1A_5A_1$, $K_1A_4$, $K_1$.   
\end{theorem}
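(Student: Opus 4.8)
The plan is to reduce the classification to a finite numerical problem by passing to the minimal resolution, and then to carry out the fine enumeration using the orbifold Bogomolov--Miyaoka--Yau inequality together with the index-$2$ structure. First I would fix the minimal resolution $\pi\colon \tilde S \to S$. Since $-K_S$ is ample, $S$ is a rational surface, so $\tilde S$ is a smooth rational surface; because $S$ is a $\mathbb{Q}$-homology $\mathbb{CP}^2$ we have $b_2(S)=1$, hence $\rho(\tilde S)=1+r$, where $r$ is the total number of exceptional curves of $\pi$. Noether's formula then gives $K_{\tilde S}^2 = 9-r$, and the reduced exceptional divisor spans a negative-definite sublattice of $H^2(\tilde S;\mathbb{Z})$. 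The index-$2$ hypothesis forces every singularity to be either a rational double point (type $A$--$D$--$E$) or of type $K_n$, so the admissible local configurations are completely prescribed by Figure \ref{fig:weighted_dual_graph_of_K_n} and the ADE graphs.

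Next I would extract the global numerical constraints. Writing $K_{\tilde S} = \pi^*K_S + \sum_i a_i E_i$ with discrepancies $a_i$ determined locally at each singularity, one computes $K_S^2$ as $K_{\tilde S}^2$ corrected by the (negative) self-intersection of $\sum_i a_i E_i$. Since $e(S)=3$, the orbifold Euler number is $e_{\mathrm{orb}}(S)=3-\sum_p\bigl(1-\tfrac{1}{|G_p|}\bigr)$, and the orbifold BMY inequality $0<K_S^2\le 3\,e_{\mathrm{orb}}(S)$ bounds both the number of singularities and the orders $|G_p|$. Feeding in the explicit discrepancies and group orders for the $K_n$ and ADE singularities turns these inequalities into finitely many arithmetic conditions on the admissible singularity types, cutting the list down to a manageable finite set of candidates.

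To pin the candidates down to exactly the eighteen listed types and to prove realizability, I would invoke the lattice-theoretic machinery of Alexeev--Nikulin. Since $2K_S$ is Cartier, the canonical index-one double cover $W\to S$ is a Gorenstein del Pezzo surface equipped with an involution $\sigma$ satisfying $S=W/\sigma$; the admissible exceptional configurations on $\tilde S$ are thereby encoded by root sublattices sitting inside a fixed hyperbolic lattice, which Nikulin's theory of discrete reflection groups (the \emph{diagram method}) classifies. Realizability of each surviving type is then established by exhibiting explicit projective (or weighted-projective) models, or by the companion lattice-embedding arguments.

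The main obstacle is precisely this final step: \emph{completeness} of the list --- verifying that no configuration surviving the numerical constraints has been overlooked --- and \emph{realizability} of all eighteen types. The coarse orbifold bounds are far from sharp, so eliminating the spurious numerical candidates and producing an actual surface for each genuine type is where the substantive work lies, and it is here that the reflection-group analysis and the explicit constructions of Alexeev--Nikulin are indispensable.
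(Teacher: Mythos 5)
First, a point of comparison: the paper does not prove this statement at all --- it is imported verbatim from \cite[Theorem 4.2]{Alexeev-Nikulin-2006}, with the citation serving as the proof. Your proposal must therefore be measured against Alexeev--Nikulin's actual argument, and as a proof it has a genuine gap, one you candidly flag yourself: the two decisive steps --- completeness of the eighteen-type list and realizability of every type --- are delegated to ``the lattice-theoretic machinery of Alexeev--Nikulin,'' i.e., to the very result being proved. What you supply independently is only the coarse reduction (minimal resolution, $K_{\tilde S}^2=9-r$, negative definiteness of the exceptional sublattice, orbifold Euler-number bounds), and this sieve is much weaker than you suggest. Note also that the inequality $K_S^2\le 3e_{\mathrm{orb}}(S)$ you invoke is stated in Theorem \ref{thm:oBMY}(1) only for $K_S$ nef; in the log del Pezzo case the available statement is just $e_{\mathrm{orb}}(S)\ge 0$ (Theorem \ref{thm:oBMY}(2)), so your numerical constraints are weaker than claimed. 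Section \ref{sec:index_2_case} of the paper illustrates concretely how far such numerics are from the answer: even with the extra hypothesis $H_1(S^0;\mathbb{Z})=0$, Lemma \ref{lem:canonical_divisor}, Donaldson-type lattice embeddings, linking forms, and $d$-invariants, one starts from a $28$-type candidate list and must eliminate the excess case by case --- and that is for a strictly smaller class of surfaces than the one classified here.

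Second, your description of the key construction is off. The index-one canonical double cover of an index-$2$ log del Pezzo surface is indeed a Gorenstein del Pezzo surface with an involution, but that is not the engine of Alexeev--Nikulin's classification. Their method (visible already in the title of \cite{Alexeev-Nikulin-1988}) runs through K3 surfaces with non-symplectic involution: one passes to a double cover whose resolution is a smooth K3 surface, classifies the possible invariant $2$-elementary hyperbolic lattices by Nikulin's lattice theory, and then controls the exceptional configurations via the diagram method for reflection groups in Lobachevsky space, with explicit constructions supplying realizability. So even read charitably as a roadmap of the cited proof rather than as a proof, the proposal misidentifies the central object; read as a proof, it is circular at exactly the two points where the substance lies.
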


Later, Kojima provided a classification of log del Pezzo surfaces of index 2 and Picard number 1, using a different approach from Alexeev and Nikulin. He also calculated the fundamental group of the smooth locus in each case. The following theorem is a part of \cite[Theorem 1.1]{Kojima-2003}.

\begin{theorem}[{\cite{Kojima-2003}}]\label{thm:Kojima}
The singularity type of a log del Pezzo $\mathbb{Q}$-homology $\mathbb{CP}^2$ of index $2$ whose smooth locus is simply-connected is one of the following four types, each of which is realizable: \[
K_5, ~K_2A_2,~ K_1A_4, ~K_1.
\]
\end{theorem}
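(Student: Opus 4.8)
The plan is to take the complete list of $18$ singularity types furnished by Theorem \ref{thm:Alexeev-Nikulin} as the starting point and, for each type, to decide whether a log del Pezzo $\mathbb{Q}$-homology $\mathbb{CP}^2$ of index $2$ realizing it can have simply-connected smooth locus; the goal is to show that exactly the four types $K_5$, $K_2A_2$, $K_1A_4$, $K_1$ survive. Fix such an $S$ with singular points $p_1,\dots,p_k$ and let $\pi\colon \tilde S\to S$ be the minimal resolution, with reduced exceptional divisor $E=\bigcup_i E_i$ a configuration of $m$ rational curves whose intersection matrix $M=(E_i\cdot E_j)$ is the block sum of the negative-definite matrices $M_1,\dots,M_k$ attached to the weighted dual graphs of the $p_j$ (so $|\det M_j|=|H_1(L_j;\mathbb Z)|$ for the link $L_j$; for instance $|\det M_j|=4n$ for a $K_n$ and $n+1$ for an $A_n$). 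Since $S$ has Picard number one, $\tilde S$ is a smooth simply-connected rational surface with $H_2(\tilde S;\mathbb Z)\cong\mathbb Z^{m+1}$ unimodular, and $S^0\cong\tilde S\setminus E$.

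First I would run a homological sieve. Writing $L_E=\langle[E_1],\dots,[E_m]\rangle\subset H_2(\tilde S;\mathbb Z)$, the long exact sequence of the pair $(\tilde S,\tilde S\setminus E)$, Lefschetz duality for a regular neighborhood of $E$, unimodularity of $H_2(\tilde S)$, and $H_1(\tilde S)=0$ combine to give the clean identification
\[
H_1(S^0;\mathbb Z)\;\cong\;\operatorname{Tors}\bigl(H_2(\tilde S;\mathbb Z)/L_E\bigr)\;\cong\;\operatorname{Tors} H_2(S;\mathbb Z).
\]
Thus $H_1(S^0)=0$ is equivalent to $L_E=\bigoplus_j M_j$ being a primitive sublattice of the unimodular lattice $H_2(\tilde S)$ with rank-one orthogonal complement, and whenever the forced torsion is nonzero the corresponding type cannot be simply-connected. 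To make this combinatorial I would combine primitivity with Donaldson's diagonalization theorem, applied to a negative-definite $4$-manifold assembled from the resolution and the positive part of $S$ (as in the method of this paper): this forces $\bigoplus_j M_j$ to embed into a diagonal lattice $\langle-1\rangle^{\oplus N}$, a condition on the weighted dual graphs that is violated by most of the $18$ types, and the $d$-invariant refinement removes the remaining borderline cases. I expect this to leave only the four claimed types, possibly together with a short list to be cleared by hand.

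Next I would upgrade the survivors from $H_1(S^0)=0$ to $\pi_1(S^0)=1$. Applying van Kampen to the decomposition of $S$ into $S^0$ and the contractible cones on the links $L_j$, and using $\pi_1(S)=1$ for the rational surface $S$, shows that $\pi_1(S^0)$ is normally generated by the images of the local fundamental groups $\pi_1(L_j)=G_j$. For each of $K_5$, $K_2A_2$, $K_1A_4$, $K_1$ I would exhibit an explicit Picard-number-one model (obtained by contracting the prescribed negative-definite configuration on a small blow-up of $\mathbb P^2$, or as a suitable quotient) and compute $\pi_1(S^0)$ directly from it, for example by verifying that $\pi_1(S^0)$ is abelian, so that the sieve's conclusion $H_1(S^0)=0$ forces triviality. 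The same models simultaneously establish the realizability clause of the statement.

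The main obstacle is precisely this last upgrade. The homological computation alone is not sufficient, since -- as the paper itself notes -- there exist $\mathbb{Q}$-homology $\mathbb{CP}^2$'s with $H_1(S^0)=0$ but $\pi_1(S^0)\neq1$, so a type can pass the sieve and still fail to be simply-connected. Controlling the genuine fundamental group rather than its abelianization requires the explicit del Pezzo models and a careful analysis of how the meridians of the exceptional curves are identified inside $\tilde S\setminus E$; this is the step where the full Picard-number-one log del Pezzo structure, and not merely the lattice data, must be brought to bear.
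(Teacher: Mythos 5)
First, a point of orientation: the paper does not prove Theorem \ref{thm:Kojima} at all; it imports it from Kojima's classification of index-two log del Pezzo surfaces of Picard number one, whose proof is algebro-geometric (explicit structure theory of the surfaces together with case-by-case computation of $\pi_1(S^0)$). So your proposal is necessarily a different route, and its first half is in fact a sound one. The ``only these four types'' direction works exactly as you outline, and more cleanly than you seem to expect: simple connectivity gives $H_1(S^0;\mathbb{Z})=0$, hence by Lemma \ref{lem:canonical_divisor}(1) (equivalently, your primitivity formulation) the numbers $|\det(R_p)|$ are pairwise coprime, and this single condition already eliminates $13$ of the $18$ types of Theorem \ref{thm:Alexeev-Nikulin} (every type containing $K_nA_m$ with $\gcd(4n,m+1)>1$, any $D_n$, or two $K_n$'s). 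The only survivor besides the four claimed types is $K_9$, which is killed by the Donaldson orthogonal-complement argument of Corollary \ref{cor:orthogonal_complement} -- precisely the computation in Lemma \ref{lem:index2_complement}(1), where both embeddings of $Q_{X(36,19)}$ into $-\mathbb{Z}^4$ have complement of square $-1$ or $-4$, never $-36$. No $d$-invariants, BMY, or square-discriminant arguments are needed; this is a genuinely shorter sieve than the paper's proof of Theorem \ref{thm:index2_list_H1=0}, because the Alexeev--Nikulin list is much smaller than the list of $28$ types the paper must generate without the del Pezzo hypothesis.

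The genuine gap is in the realizability half, and it is twofold. First, the substantive content of Kojima's theorem that the paper outsources is exactly the existence, for each of $K_5$, $K_2A_2$, $K_1A_4$, $K_1$, of a log del Pezzo model with \emph{simply-connected} smooth locus; your proposal defers these constructions entirely, so as written it proves only the necessity direction. Second, your fallback argument contains a logical slip: you propose to verify that $\pi_1(S^0)$ is abelian for an explicit model and then invoke ``the sieve's conclusion $H_1(S^0)=0$'' to force triviality. But the sieve is a statement about \emph{types}, not about surfaces: it says that types failing the obstructions admit no realization with $H_1(S^0)=0$, and it says nothing about whether a \emph{particular} realization of a surviving type has $H_1(S^0)=0$. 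Indeed $H_1(S^0)\cong\operatorname{Tors}(H_2(\tilde S;\mathbb{Z})/L_E)$ depends on the embedding $L_E\hookrightarrow H_2(\tilde S;\mathbb{Z})$, not merely on the abstract lattice $\bigoplus_j M_j$ (compare type $A_8$, which is realizable as a Gorenstein $\mathbb{Q}$-homology $\mathbb{CP}^2$ but never with $H_1(S^0)=0$). So for each model you must compute $H_1(S^0)$ directly from its Picard lattice -- as the paper does for the type $A_{10}(1,1)$ in Section \ref{subsec:realization} -- and even then, abelianness of $\pi_1(S^0)$ is not obviously easier to establish than triviality. The standard repair, used by Kojima and by this paper's realizations, is geometric: exhibit a Zariski-open subset of $S^0$ isomorphic to $\mathbb{C}^2$ (or to $\mathbb{C}\times\mathbb{C}^*$ together with a separate $H_1$ computation), which kills $\pi_1(S^0)$ outright.
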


Similar to the case of index $1$, our next result shows that these are the only singularity types that can be realized by a (not necessarily log del Pezzo) $\mathbb{Q}$-homology $\mathbb{CP}^2$ of index $2$, provided its smooth locus has trivial first integral homology group.

\begin{theorem}\label{thm:index2_list_H1=0} The singularity type of a  $\mathbb{Q}$-homology $\mathbb{CP}^2$ of index $2$ whose smooth locus has trivial first integral homology group is one of the following four types, each of which is realizable: \[
K_5, ~K_2A_2,~ K_1A_4, ~K_1.
\]
\end{theorem}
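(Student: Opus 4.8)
The plan is to run the topological-and-smooth obstruction machinery that already governs the index-one case (Theorem \ref{thm:Gorenstein_H1=0}), now with the lens-space links of the $K_n$ singularities carrying the argument. Let $S$ be a $\mathbb{Q}$-homology $\mathbb{CP}^2$ of index $2$ with $H_1(S^0;\mathbb{Z})=0$ and singular points $p_1,\dots,p_k$, at least one of type $K_n$ and the rest rational double points. Write $L_i$ for the link of $p_i$, so that $L_i=L(4n,2n-1)$ for a $K_n$ point and $L_i=L(a+1,a)$ for an $A_a$ point, while $D$- and $E$-points contribute prism or polyhedral space forms. Let $\pi\colon\widetilde{S}\to S$ be the minimal resolution; over $p_i$ it carries a negative-definite plumbing $P_i$ with $\partial P_i=L_i$ and lattice $\Lambda_i$, and $\widetilde{S}$ is a closed smooth $4$-manifold with $b_2^+=1$ and $b_2^-=R:=\sum_i\operatorname{rk}\Lambda_i$.

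First I would produce an ambient closed positive-definite manifold. Set $W=S\setminus\bigcup_i\operatorname{cone}(L_i)$, so $\widetilde{S}=W\cup_{\sqcup L_i}(\sqcup_i P_i)$ and $\partial W=\sqcup_i\overline{L_i}$. A Mayer--Vietoris computation using $H_2(L_i;\mathbb{Q})=0$ and $H_1(S^0;\mathbb{Z})=0$ gives $b_2(W)=1$ and $H_1(W)=0$, so the intersection form of $W$ is positive definite of rank one. Capping each boundary component with the orientation-reversed plumbing $\overline{P_i}$, which is positive definite with $\partial\overline{P_i}=\overline{L_i}$, yields a closed oriented $4$-manifold $X=W\cup(\sqcup_i\overline{P_i})$ whose intersection form is positive definite of rank $1+R$. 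By Donaldson's diagonalization theorem $Q_X\cong\langle 1\rangle^{1+R}$, so the positive-definite lattice $\overline{\Lambda}=\bigoplus_i\overline{\Lambda_i}$ embeds into $\langle 1\rangle^{1+R}$ with orthogonal complement of rank one. The hypothesis $H_1(S^0;\mathbb{Z})=0$ enters both numerically, through the standard relation among $|H_1(S^0)|$, the orders $d_i=|H_1(L_i;\mathbb{Z})|$, and the degree of $S$, which here pins down the degree in terms of the $d_i$, and structurally, by constraining the torsion subgroup of the cokernel of the embedding.

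The heart of the argument is the resulting combinatorial embedding problem. Because the complement has rank one, each exceptional class maps to a vector of $\langle 1\rangle^{1+R}$ obeying the prescribed self-intersections and adjacencies of the $K_n$-, $A$-, $D$-, and $E$-plumbings, and a Lisca-type analysis of such embeddings of linear and star-shaped plumbing lattices into a diagonal lattice is extremely restrictive: I expect it to bound both $k$ and the parameters $n$ and $a$, and to eliminate the $D$- and $E$-blocks together with all but the smallest chains outright. For the finitely many borderline types that survive the lattice test I would bring in Heegaard Floer $d$-invariants. Since $\overline{W}$ is a negative-definite filling of $\sqcup_i L_i$ with $b_1=0$, capping all but one boundary component with negative-definite plumbings exhibits each $L_{i_0}$ as the boundary of a negative-definite $4$-manifold $V$ with $b_1=0$, and the Ozsv\'ath--Szab\'o inequality $c_1(\mathfrak{s})^2+b_2(V)\le 4\,d(L_{i_0},\mathfrak{t})$, evaluated through the recursive formula for the correction terms of lens spaces, rules out the remaining impossible types. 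After this two-stage obstruction only $K_5$, $K_2A_2$, $K_1A_4$, and $K_1$ should survive.

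Realizability then follows at once: each of these four types is realized by one of Kojima's log del Pezzo surfaces of index $2$ (Theorem \ref{thm:Kojima}), whose smooth locus is simply connected and hence has trivial first integral homology. The main obstacle I anticipate is uniform control of the infinite families: a priori the parameters $n$, $a$ and the number $k$ of singular points are unbounded, so the crux is to extract from the corank-one Donaldson embedding, reinforced by the $d$-invariant inequalities, effective bounds that collapse the classification to a finite check, and then to dispose of the numerically consistent but geometrically impossible borderline cases, where Donaldson's theorem alone is insufficient and the Floer-theoretic obstruction becomes essential.
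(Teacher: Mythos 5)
Your toolkit (corank-one Donaldson embeddings, $d$-invariants, realizability via Theorem \ref{thm:Kojima}) overlaps with the paper's, but the logical structure is inverted, and the step you rely on for finiteness does not work. The paper first uses algebraic geometry to produce a finite list: coprimality of the $|\det(R_p)|$ (Lemma \ref{lem:canonical_divisor}(1)) forces exactly one $K_n$ point and excludes $D_n$, $E_7$, and odd-$A_n$ points outright; the formula $K_S^2=9+(\#\text{ of index-2 points})-L$ from \eqref{eq:K_index_2}, combined with $K_S^2>0$ (since $K_S$ is ample or anti-ample once $H_1(S^0;\mathbb{Z})=0$), bounds the number of exceptional curves by $L\leq 9$; the squareness of $D=K_S^2\prod_p|\det(R_p)|$ (Lemma \ref{lem:canonical_divisor}(2)) then eliminates $18$ of the $28$ surviving types, and orbifold BMY plus Theorem \ref{thm:Alexeev-Nikulin} eliminates $K_2$. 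Only after this does topology enter, and only for five residual types: Donaldson embeddings for $K_9$, $K_8$, $K_1A_8$, the linking form for $K_1E_6$, and spin $d$-invariants for $K_1E_8$. Your plan instead asks the lattice-embedding analysis itself to bound the parameters $n$, $a$ and the number of singular points, with $d$-invariants cleaning up borderline cases.

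That hope is provably false, so this is a genuine gap. Take the single singularity $K_n$ with $n=2a^2+2a+1$, e.g.\ $K_{13}$: the reversed link $-L(52,25)=L(52,27)$ bounds $X(52,27)$ with plumbing $[-2,-14,-2]$, and the vectors $u_1=e_1-e_2$, $u_2=e_2+2e_3+3e_4$, $u_3=e_3-e_4$ give an embedding $Q_{X(52,27)}\hookrightarrow-\mathbb{Z}^4$ whose orthogonal complement is generated by the primitive vector $5e_1+5e_2-e_3-e_4$ of square $-52=-4\cdot 13$, exactly as Corollary \ref{cor:orthogonal_complement} demands. The linking-form test also passes (the form $27/52$ is isomorphic to $-1/52$ since $27\equiv -5^2 \bmod 52$), and so does the spin $d$-invariant test of Corollary \ref{cor:spin_d_2}, because $d(L(4n,2n-1),3n-1)=1/4$ for odd $n$. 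Thus the entire family $K_{2a^2+2a+1}$ survives every topological obstruction you propose; it is killed only by the algebro-geometric contradiction $K_S^2=10-n<0$, which your argument never establishes because you never derive \eqref{eq:K_index_2} or use $\pm K_S$ ample. A second, smaller defect: your claim that the embedding analysis eliminates the $E$-blocks cannot be carried out even in principle for types containing $E_8$, since $-L(E_8)$ bounds no negative definite $4$-manifold at all, so the hypothesis of Corollary \ref{cor:Donaldson} fails; the paper disposes of $K_1E_8$ by the spin cobordism equality $d=-1/4$ rather than by any definite filling. Your realizability step via Theorem \ref{thm:Kojima} is fine, but without the algebro-geometric reduction the classification never becomes a finite check.
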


As a consequence, we obtain the following corollary which generalizes Theorem \ref{thm:Kojima}.

\begin{corollary}\label{cor:index2_list_pi1=1} The singularity type of a  $\mathbb{Q}$-homology $\mathbb{CP}^2$ of index $2$ whose smooth locus is simply-connected is one of the following four types, each of which is realizable: \[
K_5, ~K_2A_2,~ K_1A_4, ~K_1.
\]
\end{corollary}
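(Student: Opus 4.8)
The plan is to obtain this corollary essentially for free by combining the two preceding theorems, exploiting the fact that simple connectivity is strictly stronger than the vanishing of first integral homology. The essential observation is that for any path-connected space $S^0$ the group $H_1(S^0;\mathbb{Z})$ is the abelianization of $\pi_1(S^0)$; hence $\pi_1(S^0)=1$ forces $H_1(S^0;\mathbb{Z})=0$. All the genuine content of the corollary therefore already lives in Theorem \ref{thm:index2_list_H1=0} and Theorem \ref{thm:Kojima}, and the task reduces to assembling them correctly.

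First I would establish the upper bound on the list of singularity types. Suppose $S$ is a $\mathbb{Q}$-homology $\mathbb{CP}^2$ of index $2$ whose smooth locus $S^0$ is simply-connected. By the observation above we have $H_1(S^0;\mathbb{Z})=0$, so Theorem \ref{thm:index2_list_H1=0} applies and the singularity type of $S$ must be one of $K_5$, $K_2A_2$, $K_1A_4$, $K_1$. This yields the ``only these four types'' half of the statement, and it is precisely here that the topological and smooth obstructions packaged into Theorem \ref{thm:index2_list_H1=0} do all the work.

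Next I would address realizability, where nothing new must be constructed. Theorem \ref{thm:Kojima}, due to Kojima, asserts exactly that each of the four types $K_5$, $K_2A_2$, $K_1A_4$, $K_1$ is realized by a log del Pezzo $\mathbb{Q}$-homology $\mathbb{CP}^2$ of index $2$ whose smooth locus is \emph{simply-connected}. Since a log del Pezzo surface of Picard number one is in particular a $\mathbb{Q}$-homology $\mathbb{CP}^2$, these four examples simultaneously witness realizability with simply-connected smooth locus, completing the argument.

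I do not anticipate any real obstacle, as the difficult analysis is confined to the proofs of Theorem \ref{thm:index2_list_H1=0} and Theorem \ref{thm:Kojima}. The only points requiring care are to invoke the homological implication in the correct direction (namely $\pi_1(S^0)=1 \Rightarrow H_1(S^0;\mathbb{Z})=0$, not its converse) and to observe that Kojima's realizability already supplies examples with simply-connected smooth loci, so no independent realization step is needed. This deduction mirrors exactly how Corollary \ref{cor:Gorenstein_pi1=1} is obtained from Theorem \ref{thm:Gorenstein_H1=0} together with Theorem \ref{thm:del_Pezzo_Gorenstein} in the index-one case.
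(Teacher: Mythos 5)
Your proposal is correct and matches the paper's own argument exactly: the paper deduces Corollary \ref{cor:index2_list_pi1=1} from Theorem \ref{thm:index2_list_H1=0} via the implication $\pi_1(S^0)=1 \Rightarrow H_1(S^0;\mathbb{Z})=0$, and cites Theorem \ref{thm:Kojima} for realizability by surfaces with simply-connected smooth locus. Nothing is missing.
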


\begin{remark}\label{rmk:thm_index2_H1=0} 
 The types $K_1A_4$ and $K_1$ cannot be realized by a $\mathbb{Q}$-homology $\mathbb{CP}^2$ whose canonical divisor is ample because these two types do not satisfy the orbifold BMY inequality (Theorem \ref{thm:oBMY}(1)). We do not know whether the types $K_5$ and $K_2A_2$ can be realized by a $\mathbb{Q}$-homology $\mathbb{CP}^2$ with ample canonical divisor.
\end{remark}

\subsection*{Singularity types of $\mathbb{Q}$-homology $\mathbb{CP}^2$'s of index three} We also provide a partial classification of the singularity types that can be realized by a $\mathbb{Q}$-homology $\mathbb{CP}^2$ of index $3$ whose smooth locus has trivial first integral homology group.

\begin{theorem}\label{thm:index3_list} The singularity type of a $\mathbb{Q}$-homology $\mathbb{CP}^2$ of index $3$ whose smooth locus has trivial first integral homology group is one of the following $18$ types. All types, except possibly $A_2(1,2)E_7$ and $A_2(2,2)E_8$, are realizable: \begin{align*}
    &A_1(1)E_8,~A_1(1)D_7,~ A_1(1)A_6,~A_1(1)A_4A_1,~ A_1(1)A_3,~A_1(1),~ A_6(1,1),~ A_{10}(1,1),~A_2(1,2)E_7, \\ &A_2(1,2)A_4,\ A_2(1,2)A_1,\ A_4(1,2)A_1,\ A_1(2)A_6,\ A_1(2),\ A_2(2,2)E_8,\ A_2(2,2)A_3,\ A_6(2,2),\  D_5(2).
\end{align*}
\end{theorem}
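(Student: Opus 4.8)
The plan is to adapt to index $3$ the three-stage strategy---numerical enumeration, smooth obstruction, explicit construction---that underlies Theorems \ref{thm:Gorenstein_H1=0} and \ref{thm:index2_list_H1=0}. Fix a $\mathbb{Q}$-homology $\mathbb{CP}^2$ $S$ of index $3$ with $H_1(S^0;\mathbb{Z})=0$, write $\pi\colon\tilde{S}\to S$ for the minimal resolution, and let $\Lambda=\bigoplus_{p}\Lambda_p$ be the negative-definite intersection lattice of the reduced exceptional divisor, of total rank $\rho$, the sum running over the singular points $p$. Since the index is $3$, each $p$ is either a rational double point or an index-$3$ quotient singularity, so each $\Lambda_p$ is one of the explicit plumbing lattices attached to the $A$-$D$-$E$ graphs and to the index-$3$ types $A_m(a,b)$ and $D_5(2)$.

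First I would use the orbifold Bogomolov--Miyaoka--Yau inequality (Theorem \ref{thm:oBMY}) to produce a finite list of candidate configurations. As $\tilde{S}$ has $b_2^+=1$ and $b_2(\tilde{S})=\rho+1$, the orbifold Euler number of $S$ is pinned to that of $\mathbb{CP}^2$, and the inequality bounds a weighted sum of the local contributions, hence both the number of singular points and the lengths of the index-$3$ chains. Together with the integrality constraints forced by $b_2(S)=1$---so that the discriminant of $\Lambda$ equals $|H_1(S)|$ times the product of the orders $|H_1(L_p)|$ of the links $L_p=\partial P_p$---this leaves finitely many configurations. I would then impose $H_1(S^0;\mathbb{Z})=0$: writing $S^0_{\mathrm{cpt}}$ for the complement in $\tilde{S}$ of open neighborhoods of the exceptional divisors (so $S^0_{\mathrm{cpt}}$ has $b_2^+=1$, $b_2^-=0$ and boundary $\bigsqcup_p(-L_p)$), the Mayer--Vietoris sequence for $S=S^0_{\mathrm{cpt}}\cup(\text{cones on the }L_p)$ expresses $H_1(S^0)$ through the orders $|H_1(L_p)|$ and the cokernel of $\Lambda$, and its vanishing sharply restricts the admissible orders, trimming the list further.

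The smooth obstruction is the heart of the argument. By Poincar\'e duality the form $Q_{\tilde{S}}$ is unimodular of signature $(1,\rho)$, with $\Lambda$ its negative-definite part of corank one. Donaldson's diagonalization theorem pins down this form---the standard diagonal $\langle 1\rangle\oplus\langle -1\rangle^{\rho}$ in the odd case, and the even lattice $H\oplus E_8(-1)^{(\rho-1)/8}$ permitted only when $\rho\equiv 1\bmod 8$---and the embedding of the plumbing lattice $\Lambda$ into it must moreover be compatible with the positive generator of $H_2(S;\mathbb{Q})$, whose square is the fixed orbifold degree of $S$. For most configurations surviving the first stage no such compatible embedding exists, and they are eliminated. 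I expect this to be the main obstacle: the index-$3$ chains $A_m(a,b)$ combine with the $A$-$D$-$E$ summands in many patterns, and ruling out each mixed pattern by a lattice-embedding argument---uniformly in the chain length $m$---is the most delicate and case-laden part of the proof.

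For the configurations that survive the lattice test I would invoke the Heegaard Floer correction terms. Because $b_2^+(S^0_{\mathrm{cpt}})=1$, the orientation reversal $\overline{S^0_{\mathrm{cpt}}}$ is a negative-definite filling of $\bigsqcup_p L_p$ of second Betti number one, so the Ozsv\'ath--Szab\'o inequality $c_1(\mathfrak{t})^2+1\le 4\,d\big(\bigsqcup_p L_p,\mathfrak{t}|\big)$ must hold for every $\mathrm{spin}^c$ structure $\mathfrak{t}$; since the $d$-invariants of the lens-space links are recursively computable, this constrains the sum $\sum_p d(L_p,\cdot)$ and rules out the remaining non-realizable candidates, leaving precisely the $18$ listed types. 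Finally, for realizability I would construct the $16$ realizable types explicitly---as global quotients, weighted-projective or toric models, or through $\mathbb{Q}$-Gorenstein smoothings and rational blow-downs---while for $A_2(1,2)E_7$ and $A_2(2,2)E_8$ I expect all of the above obstructions to vanish yet no construction to present itself, which is exactly why these two types are left undetermined.
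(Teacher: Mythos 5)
Your overall architecture---numerical enumeration, lattice and Floer-theoretic obstructions, explicit realizations, with $A_2(1,2)E_7$ and $A_2(2,2)E_8$ left open---matches the paper's, but two of your three stages have genuine gaps. The first is the enumeration itself: the orbifold BMY inequality (Theorem \ref{thm:oBMY}) controls $\sum_p\bigl(1-1/|G_p|\bigr)$, and each singular point contributes \emph{less than} $1$ to this sum no matter how long its resolution chain is, so BMY bounds the number of singular points but cannot bound the lengths $n$ of the chains $A_n(a,b)$; your first stage therefore does not terminate in a finite list. What actually gives finiteness in the paper is: (i) Lemma \ref{lem:canonical_divisor}(1), forcing the $|\det(R_p)|$ to be pairwise coprime (hence exactly one index-$3$ point) and $K_S$ ample or anti-ample, so Picard rank one gives $K_S^2>0$; (ii) the identity $K_S^2=9-L-\sum_p D_p^2$ with bounded local corrections $D_p^2$, which caps the number $L$ of exceptional curves; and above all (iii) the perfect-square condition on $D=K_S^2\prod_p|\det(R_p)|$ from Lemma \ref{lem:canonical_divisor}(2), which does the bulk of the elimination (21 of the 33 candidates in the $A_n(1,2)$ case, 48 of 58 in the $A_1(2)/A_n(2,2)$ case). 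Items (ii) and (iii) appear nowhere in your proposal.

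The second gap is your ``smooth obstruction.'' Donaldson's theorem concerns \emph{definite} forms of closed smooth $4$-manifolds; the isomorphism type of the indefinite unimodular form $Q_{\tilde S}$ ($\langle 1\rangle\oplus\langle-1\rangle^{\rho}$ or $H\oplus E_8(-1)^{k}$) is the classical Hasse--Minkowski/Milnor classification, valid for topological manifolds, so the condition you extract---an embedding of the resolution lattice into $Q_{\tilde S}$ compatible with the orbifold class---contains no gauge theory at all. It is a necessary and in fact nontrivial condition, but you neither verify it excludes the types that must be excluded nor have grounds to expect it is as strong as the genuinely smooth obstruction. The paper's Donaldson input enters through a different construction: excising cone neighborhoods gives $W^0$ with $b_2=b_2^+=1$ and $\partial W^0=\coprod_p(-L_p)$; each $-L_p$ is capped off by its negative definite plumbing $X(p,p-q)$; Donaldson applied to the resulting \emph{closed negative definite} manifold yields Corollary \ref{cor:orthogonal_complement}, an embedding condition on the dual plumbing lattices $Q_{X_p}$ (not the resolution lattices) into $-\mathbb{Z}^{n+1}$ whose orthogonal complement has square $-\prod_p|H_1(L_p;\mathbb{Z})|$; and it is these definite embeddings that are enumerated in Lemma \ref{lem:index3_Donaldson} and its Case-4 analogue to kill $A_2(1,2)A_7$, $A_2(1,2)$, $A_3(1,2)$, $A_6(1,2)$, $A_9(1,2)$, $A_{10}(1,2)$, $A_2(2,2)A_9$, $A_5(2,2)A_6$, $A_{11}(2,2)$. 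You are also missing two further tools the paper needs: the linking-form comparison (Corollary \ref{cor:linking_form} plus Lemma \ref{lem:linking_form}), which rules out $A_2(1,2)E_8$ and $A_2(1,2)D_5$ and, via non-cyclicity of $H_1$, all $D_n(1)$ and $D_n(2)$ with $n$ even; and the spin-filling equality $d=-\tfrac14$ of Corollary \ref{cor:spin_d_2}, which rules out $A_1(2)E_8$, $A_3(2,2)E_8$ and $D_9(2)$. Your Ozsv\'ath--Szab\'o inequality for definite fillings is in the right spirit for this last group, but it is not what the paper uses and you have not checked it suffices.
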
 

The notation for the singularity types will be introduced in Section \ref{sec:index3_case}.
We do not know whether either of the two types, $A_2(1,2)E_7$ and $A_2(2,2)E_8$, is realizable. It will be shown in Section \ref{sec:index3_case} that each of the $16$ realizable cases can indeed be realized by a $\mathbb{Q}$-homology $\mathbb{CP}^2$ with simply-connected smooth locus.

\begin{corollary} The singularity type of a $\mathbb{Q}$-homology $\mathbb{CP}^2$ of index $3$ whose smooth locus is simply-connected is one of the following $18$ types. All types, except possibly $A_2(1,2)E_7$ and $A_2(2,2)E_8$, are realizable: \begin{align*}
    &A_1(1)E_8,~A_1(1)D_7,~ A_1(1)A_6,~A_1(1)A_4A_1,~ A_1(1)A_3,~A_1(1),~ A_6(1,1),~ A_{10}(1,1),~A_2(1,2)E_7, \\ &A_2(1,2)A_4,\ A_2(1,2)A_1,\ A_4(1,2)A_1,\ A_1(2)A_6,\ A_1(2),\ A_2(2,2)E_8,\ A_2(2,2)A_3,\ A_6(2,2),\  D_5(2).
\end{align*}
\end{corollary}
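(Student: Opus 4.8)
The plan is to deduce this corollary directly from Theorem \ref{thm:index3_list} together with the explicit constructions announced for Section \ref{sec:index3_case}, splitting the argument into a classification (necessity) part and a realizability (sufficiency) part. The key elementary fact driving the classification direction is that simple connectivity is strictly stronger than vanishing first homology: for any space, $H_1(S^0;\mathbb{Z})$ is the abelianization of $\pi_1(S^0)$, so $\pi_1(S^0)=1$ forces $H_1(S^0;\mathbb{Z})=0$.

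For necessity, I would argue as follows. Let $S$ be a $\mathbb{Q}$-homology $\mathbb{CP}^2$ of index $3$ whose smooth locus $S^0$ is simply-connected. Then $H_1(S^0;\mathbb{Z})=0$ by the observation above, so $S$ satisfies the hypotheses of Theorem \ref{thm:index3_list}. Consequently its singularity type must already appear among the $18$ types listed there. This shows that imposing the stronger hypothesis $\pi_1(S^0)=1$ cannot enlarge the list, and in fact the candidate list is identical.

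The substantive content of the corollary lies in the realizability direction, and this is where the main obstacle sits. The passage from trivial $H_1$ to trivial $\pi_1$ does \emph{not} hold in general — indeed the introduction recalls, via \cite{Hwang-Keum-2011-1}, a $\mathbb{Q}$-homology $\mathbb{CP}^2$ whose smooth locus has trivial first integral homology but nontrivial fundamental group — so one cannot upgrade the realizing examples of Theorem \ref{thm:index3_list} abstractly. Instead, for each of the $16$ types asserted to be realizable, I would produce (or verify) explicit models and compute $\pi_1(S^0)$ directly, confirming that it is trivial rather than merely perfect or having trivial abelianization. This is precisely the verification promised in the remark preceding the corollary, namely that each of the $16$ realizable cases can be realized by a $\mathbb{Q}$-homology $\mathbb{CP}^2$ with simply-connected smooth locus; the constructions in Section \ref{sec:index3_case} supply these models, and checking simple connectivity of each $S^0$ (for instance through the geometry of the construction, e.g.\ presenting $\pi_1(S^0)$ from the resolution data or from a fibration/quotient structure) completes the proof. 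The two types $A_2(1,2)E_7$ and $A_2(2,2)E_8$ remain open for the same reason as in Theorem \ref{thm:index3_list}: no obstruction rules them out, yet no construction is known, so they are listed as possibly realizable in both statements.
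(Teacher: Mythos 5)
Your proposal is correct and takes essentially the same route as the paper: the classification direction is the trivial implication $\pi_1(S^0)=1 \Rightarrow H_1(S^0;\mathbb{Z})=0$ combined with Theorem \ref{thm:index3_list}, and the realizability direction is exactly what the paper carries out in Section \ref{sec:index3_case}, where each of the $16$ realizable types is given an explicit model whose smooth locus is verified to be simply-connected (for most types because $S^0$ contains a Zariski open subset isomorphic to $\mathbb{C}^2$, and for $A_{10}(1,1)$ by a separate fibration argument showing $\pi_1(S^0)$ is a quotient of $\mathbb{Z}$ with trivial abelianization). You also correctly pinpointed the one subtlety---that realizations with $H_1=0$ cannot be upgraded abstractly to simply-connected ones---which is precisely why the paper checks $\pi_1=1$ directly on its constructions rather than quoting Theorem \ref{thm:index3_list} alone.
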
 

\begin{remark}\label{rmk:index3_cor}
The canonical divisor of each of the $16$ realizations presented in Section \ref{sec:index3_case} is anti-ample. Among these, the $10$ types, $A_1(1)A_4A_1$, $A_1(1)A_3$, $A_1(1)$, $A_2(1,2)A_4$, $A_2(1,2)A_1$, $A_4(1,2)A_1$, $A_1(2)A_6$, $A_1(2)$, $A_2(2,2)A_3$, and $D_5(2)$, cannot be realized by a $\mathbb{Q}$-homology $\mathbb{CP}^2$ with an ample canonical divisor, as they do not satisfy the orbifold BMY inequality (Theorem \ref{thm:oBMY}(1)). Whether the remaining $8$ types, including $A_2(1,2)E_7$ and $A_2(2,2)E_8$, can be realized by a $\mathbb{Q}$-homology $\mathbb{CP}^2$ with an ample canonical divisor remains unknown.

On the other hand, Hwang \cite{Hwang-personal} has confirmed that there does not exist a log del Pezzo $\mathbb{Q}$-homology $\mathbb{CP}^2$ of index $3$ whose singularity type is $A_2(1,2)E_7$ or $A_2(2,2)E_8$. Therefore, if a $\mathbb{Q}$-homology $\mathbb{CP}^2$ has singularity type $A_2(1,2)E_7$ or $A_2(2,2)E_8$, its canonical divisor must be ample.
\end{remark}

The proof of our main classification results relies on the study of topological and smooth 4-manifolds, a central idea in \cite{Jo-Park-Park-2024}. We briefly outline our approach here: Fixing the index, we first generate a finite list of all possible singularity types by applying fundamental properties and known results of $\mathbb{Q}$-homology $\mathbb{CP}^2$'s. Next, we observe that, for a $\mathbb{Q}$-homology $\mathbb{CP}^2$, each singularity has a neighborhood homeomorphic to the cone on its link. By excising these cone neighborhoods from $S$, we obtain a compact, smooth (real) 4-dimensional manifold with $b_2=b_2^+=1$, whose boundary is the disjoint union of the links with opposite orientation. 

We then apply various tools from the theory of topological and smooth 4-manifolds such as linking form, Donaldson's diagonalization theorem, and Heegaard Floer $d$-invariants to demonstrate, through case-by-case analysis, the non-existence of such a 4-manifold in certain cases. Finally, we confirm that the remaining singularity types, which could not be eliminated, are realizable on a $\mathbb{Q}$-homology $\mathbb{CP}^2$, thereby completing the proof.

\subsection*{Acknowledgements} The authors express their gratitude to DongSeon Hwang for introducing this topic. They also thank all members of the 4-manifold topology group at Seoul National University (SNU) for their invaluable comments and insights throughout this work. Jongil Park was supported by the National Research Foundation of Korea (NRF) grant funded by the Korean government (No.2020R1A5A1016126 and RS-2024-00392067). 
He is also affiliated with the Research Institute of Mathematics at SNU. Kyungbae Park was supported by NRF grant funded by the Korean government (No.2021R1A4A3033098 and No.2022R1F1A1071673).

\section{Preliminaries}\label{sec:preliminaries}

In this section, we compile a variety of techniques that can be used to reduce the list of possible singularity types on rational homology projective planes. The properties applied here are drawn from multiple fields; in particular, we address the orbifold BMY inequality from algebraic geometry and Donaldson's diagonalization theorem from the theory of smooth 4-manifolds.

\subsection{Results from Algebraic Geometry}
For a normal projective surface $S$ with quotient singularities, its \emph{orbifold Euler characteristic}, denoted by $e_{\textup{orb}}(S)$, is defined by the following formula: 
\[ 
    e_{\textup{orb}}(S):= e(S) - \sum_{p\in \textup{Sing}(S)} \left(1-\frac{1}{|G_p|} \right),
\]
where $e(S)$ is the topological Euler characteristic of $S$, and $G_p$ is the local fundamental group at the point $p$, which is the fundamental group of the link $L_p$ of $S$ at $p$.

\begin{theorem}\label{thm:oBMY} 
    Let $S$ be a normal projective surface with quotient singularities. 
    \begin{enumerate}[label=\normalfont(\arabic*)]
        \item \textup{(Orbifold BMY inequality) \cite{KoNS-1989,Megyesi-1999,Miyaoka-1984,Sakai-1980}}  If the canonical class $K_S$ of $S$ is nef, then 
        \begin{equation*} 
            K^2_S \leq 3e_{\textup{orb}}(S).
        \end{equation*}
        In particular, \[0\leq e_{\textup{orb}}(S).\]
        \item \textup{(Weak orbifold BMY inequality) \cite{KeM-1999}}  If $-K_S$ is nef, then 
        \[ 0\leq e_{\textup{orb}}(S). \]
    \end{enumerate} 
\end{theorem}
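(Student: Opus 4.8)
The plan is to reduce both inequalities to the classical Bogomolov--Miyaoka--Yau theorem for smooth projective surfaces by passing to the minimal resolution and carefully tracking the local contributions of the quotient singularities. Let $\pi\colon \tilde S\to S$ be the minimal resolution, with reduced exceptional divisor $E=\sum_i E_i$ lying over $\textup{Sing}(S)$. Since quotient singularities are log terminal, I would write $K_{\tilde S}=\pi^*K_S+\sum_i a_iE_i$ with discrepancies $-1<a_i\le 0$, and set $\Delta:=-\sum_i a_iE_i\ge 0$, so that $(\tilde S,\Delta)$ is a klt pair with $K_{\tilde S}+\Delta=\pi^*K_S$. The first (bookkeeping) step is to verify the two identifications
\[
(K_{\tilde S}+\Delta)^2=(\pi^*K_S)^2=K_S^2,\qquad \bar c_2(\tilde S,\Delta)=e_{\textup{orb}}(S),
\]
where $\bar c_2$ denotes the orbifold second Chern number. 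The first is immediate from the projection formula; the second reduces to a local computation at each $p\in\textup{Sing}(S)$, comparing the Euler-characteristic contribution of the exceptional configuration over $p$ with the correction term $1-1/|G_p|$, and follows from the description of the link $L_p$ as the boundary of the plumbing along $E_p$.

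For Part (1), with $K_S$ nef, I would follow Miyaoka's algebraic route. Consider the orbifold cotangent sheaf of $S$ (equivalently, a suitable fractional-coefficient twist of the log cotangent sheaf $\Omega^1_{\tilde S}(\log)$). The core of the argument is to show that this rank-two orbifold sheaf is semistable with respect to the nef class $K_S$: a destabilizing sub-line-sheaf $\mathcal L\subset\Omega^1_S$ would satisfy $\mathcal L\cdot K_S>\tfrac12 K_S^2$, and one derives a contradiction using the nefness of $K_S$ to bound such sub-sheaves. Granting semistability, Miyaoka's refinement of Bogomolov's inequality gives $\bar c_1^2\le 3\bar c_2$, which by the identifications above is exactly $K_S^2\le 3e_{\textup{orb}}(S)$; the ``in particular'' clause then follows since $K_S$ nef forces $K_S^2\ge 0$. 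When $K_S$ is moreover ample, one could instead invoke the orbifold Aubin--Yau theorem to produce an orbifold K\"ahler--Einstein metric of negative scalar curvature and read off $3\bar c_2-\bar c_1^2\ge 0$ as the integral of the nonnegative squared norm of the trace-free curvature, recovering the nef case by a limiting argument.

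For Part (2), with $-K_S$ nef, the K\"ahler--Einstein sign is reversed and semistability of the cotangent sheaf fails, so I would argue differently, following the Keel--McKernan analysis of rational curves on log surfaces. Here $-K_S$ nef forces $S$ to be of log del Pezzo type and hence covered by rational curves; the nonnegativity $0\le e_{\textup{orb}}(S)$ is then extracted from the global structure, either through an orbifold conic- or elliptic-type fibration that reduces the estimate to the base, or through finiteness properties of the orbifold fundamental group. I expect the main obstacle to lie in Part (1): establishing semistability of the orbifold cotangent sheaf under only the nef (rather than ample) hypothesis, together with the precise orbifold/log Chern-class bookkeeping with fractional boundary coefficients that makes $\bar c_1^2$ and $\bar c_2$ equal to $K_S^2$ and $e_{\textup{orb}}(S)$. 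The weak inequality in Part (2) is genuinely separate and relies on surface-specific structure theory rather than on a Chern-class inequality.
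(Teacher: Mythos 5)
First, note that the paper itself does not prove this theorem: it is quoted as a known result, with part (1) attributed to Kobayashi--Nakamura--Sakai, Megyesi, Miyaoka, and Sakai, and part (2) to Keel--McKernan, and it is then used as a black box throughout. So there is no in-paper argument to compare your attempt against; the only meaningful comparison is between your sketch and the cited literature.

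Judged as a proof, your proposal has genuine gaps, which you yourself partly acknowledge. The preliminary bookkeeping is correct: writing $K_{\tilde S}=\pi^*K_S+\sum_i a_iE_i$ with $a_i\in(-1,0]$ for the minimal resolution of a quotient (klt) singularity, setting $\Delta=-\sum_i a_iE_i\ge 0$, and noting $(K_{\tilde S}+\Delta)^2=K_S^2$ is fine, and matching $e_{\textup{orb}}(S)$ with the orbifold second Chern number is a standard local computation. But in part (1) the entire mathematical content is concentrated in the step you introduce with ``Granting semistability'': the semistability (more precisely, Miyaoka's generic semipositivity) of the orbifold or log cotangent sheaf with respect to a merely nef polarization \emph{is} the theorem of Miyaoka, and ruling out a destabilizing sub-line-sheaf is not achieved by ``using the nefness of $K_S$ to bound such sub-sheaves'' --- it requires Bogomolov-type results on big line subsheaves of $\Omega^1$ (foliation and fibration arguments), together with a Bogomolov inequality adapted to fractional boundary coefficients. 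Your proposal names these ingredients but does not supply them, and the alternative K\"ahler--Einstein route has the same issue: the passage from the ample to the nef case ``by a limiting argument'' is itself nontrivial. In part (2), the claim that $-K_S$ nef ``forces $S$ to be of log del Pezzo type and hence covered by rational curves'' is unjustified (nef is strictly weaker than ample, and uniruledness requires proof), and extracting $0\le e_{\textup{orb}}(S)$ ``from the global structure'' is precisely the content of the Keel--McKernan memoir; nothing in the sketch substitutes for it. In short, your text is an accurate roadmap of how the cited references proceed --- consistent with how the paper treats the statement --- but it is a citation-level outline rather than a proof, and the steps it defers are the whole difficulty.
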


Recall that if $S$ is a $\mathbb{Q}$-homology $\mathbb{CP}^2$, then either $K_S$ is nef or $-K_S$ is ample. Thus, by Theorem \ref{thm:oBMY}, we have $e_{\textup{orb}}(S)\geq 0$. Since $e(S)=e(\mathbb{CP}^2)=3$ and $|G_p|\geq 2$ for each $p\in \textup{Sing}(S)$, it follows directly that $|\textup{Sing}(S)|\leq 6$. In fact, the following result is known, which combines \cite[Theorem 1.2]{Belousov-2008} and \cite[Theorem 1.1]{Hwang-Keum-2011-2} (see also \cite[Theorem 2.2]{Hwang-Keum-Ohashi-2015}).

\begin{theorem}\label{thm:5_quot_sing} Let $S$ be a $\mathbb{Q}$-homology $\mathbb{CP}^2$ with quotient singularities. Then $S$ has at most five singularities. Moreover, if $S$ has exactly five singularities, it must be Gorenstein with singularity type $2A_33A_1$.   
\end{theorem}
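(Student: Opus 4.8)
The plan is to prove Theorem~\ref{thm:5_quot_sing} by first establishing the crude bound $|\textup{Sing}(S)|\leq 6$ via the orbifold BMY inequality, and then working harder to rule out the case of six singularities and to pin down the five-singularity case. Since $S$ is a \qhcp, Theorem~\ref{thm:oBMY} gives $e_{\textup{orb}}(S)\geq 0$, which unwinds to the inequality
\[
    \sum_{p\in \textup{Sing}(S)} \left(1-\frac{1}{|G_p|}\right) \leq e(S) = 3.
\]
Because every local fundamental group is nontrivial, $|G_p|\geq 2$, so each summand is at least $\tfrac12$. Hence the number of singular points is at most $6$, and equality forces every $|G_p|=2$, i.e.\ every singularity of type $A_1$. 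So the whole problem reduces to a finer analysis of the arithmetic constraints imposed on the torsion data (linking form / discriminant group) by the fact that $S$ is a \emph{rational homology} projective plane, not just an Euler-characteristic count.

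The key extra ingredient is the torsion/linking constraint coming from $H_1$ of the boundary. Excising cone neighborhoods of the singularities yields a smooth compact $4$-manifold $W$ with $b_2 = b_2^+ = 1$ whose boundary is $\bigsqcup_p (-L_p)$, where $L_p$ is the link of the singularity at $p$; for an $A_n$ singularity the link is the lens space $L(n+1,n)$, whose first homology has order $n+1$ (so order $2$ for $A_1$). The condition that $S$ is a \qhcp\ constrains the order of the torsion subgroup of $H_1(S)$ and hence, through the Mayer--Vietoris / half-lives-half-dies structure, relates the product $\prod_p |G_p|$ to a perfect square (up to the contribution of $H_2(S)$); this is the standard fact that for a \qhcp\ the order of $\bigoplus_p G_p^{\mathrm{ab}}$ is a square times the square of $|H_1(S;\mathbb{Z})_{\mathrm{tors}}|$ adjusted by the self-intersection data. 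I would invoke the cited results \cite{Belousov-2008, Hwang-Keum-2011-2} at exactly this point, since the theorem is stated as a combination of theirs; the genuinely new content is not in this excerpt but in those papers, and the honest statement of the plan is that the proof \emph{assembles} those two results.

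Concretely, I would proceed as follows. First, reduce to six candidate $A_1$'s via orbifold BMY as above. Second, eliminate six singularities: six $A_1$'s would force $e_{\textup{orb}}(S) = 3 - 6\cdot\tfrac12 = 0$, and one checks this is incompatible with the $K^2_S \leq 3 e_{\textup{orb}}(S) = 0$ constraint together with the realizability/divisibility conditions on the discriminant form of a \qhcp\ (six copies of $\mathbb{Z}/2$ cannot embed compatibly with $b_2=1$ and the required self-linking), following \cite{Hwang-Keum-2011-2}. Third, for exactly five singularities, the inequality forces $\sum_p(1-\tfrac1{|G_p|})\leq 3$ with five terms, which severely limits the multiset of orders; a short arithmetic case analysis (all five at most $A_2$-order, with at most one or two exceeding order $2$) combined with the square/linking-form obstruction singles out the type $2A_3 3A_1$ and shows it must be Gorenstein, again quoting \cite{Belousov-2008, Hwang-Keum-Ohashi-2015}.

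The main obstacle is the five-singularity refinement rather than the initial counting bound. Getting from ``at most five'' to the rigid conclusion ``exactly five implies $2A_3 3A_1$ and Gorenstein'' requires controlling the interaction between the orbifold Euler characteristic bound, the integrality of the discriminant form of the intersection lattice of $W$, and the constraint $b_2^+(W)=1$; the delicate part is showing that no other combination of five quotient-singularity orders whose deficiencies sum to at most $3$ can simultaneously satisfy the linking-form square condition and admit an actual \qhcp\ structure. Since Theorem~\ref{thm:5_quot_sing} is explicitly attributed to the combination of \cite{Belousov-2008} and \cite{Hwang-Keum-2011-2}, my proposal is ultimately to cite those classifications for this hardest step rather than to reprove them, and to present only the orbifold BMY reduction and the structure of the excision argument in full detail.
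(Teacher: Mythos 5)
Your proposal ends exactly where the paper does. The paper offers no proof of Theorem \ref{thm:5_quot_sing}: in the preceding paragraph it derives the crude bound $|\textup{Sing}(S)|\leq 6$ from Theorem \ref{thm:oBMY}, just as you do, and then states the theorem as known, combining \cite{Belousov-2008} and \cite{Hwang-Keum-2011-2}. Since your final position is likewise to cite those two papers for everything beyond the BMY counting bound, the two treatments agree in substance, and at that level your plan is fine.

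However, the intermediate steps you sketch as though they were routine fillable detail are not correct, and would fail if executed as described. First, the five-singularity arithmetic: with five singular points, $e_{\textup{orb}}(S)\geq 0$ only says $\sum_p 1/|G_p|\geq 2$, whose solutions in orders are $(2,2,2,2,n)$ for \emph{every} $n\geq 2$, $(2,2,2,3,n)$ with $n\leq 6$, $(2,2,2,4,4)$, and $(2,2,3,3,3)$. In particular four $A_1$'s plus one singularity of arbitrarily large order passes BMY, and the order does not determine the type (e.g.\ $A_7$ and $D_4$ both have $|G_p|=8$), so there is no ``short arithmetic case analysis'' reducing to $2A_33A_1$; that reduction is precisely the hard lattice-theoretic content of \cite{Hwang-Keum-2011-2}. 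Second, the six-singularity elimination you sketch (``six copies of $\mathbb{Z}/2$ cannot embed compatibly with $b_2=1$'') relies on linking-form obstructions of the kind in Corollary \ref{cor:linking_form} and Corollary \ref{cor:orthogonal_complement}, but all of those require $H_1(S^0;\mathbb{Z})=0$, which is \emph{not} a hypothesis of Theorem \ref{thm:5_quot_sing}. A correct elimination within the paper's toolkit runs as follows: six singularities forces six $A_1$'s, all rational double points, so $K_S^2=K_{\tilde{S}}^2=12\chi(\mathcal{O}_{\tilde{S}})-e(\tilde{S})=3+12p_g(\tilde{S})\geq 3>0$, whence $K_S\not\equiv 0$ and $\pm K_S$ is ample. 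If $K_S$ is ample, Theorem \ref{thm:oBMY}(1) gives $K_S^2\leq 3e_{\textup{orb}}(S)=0$, a contradiction. If $-K_S$ is ample, then $S$ is a log del Pezzo surface of Picard number one, hence rational, so $p_g(\tilde{S})=0$ and $K_S^2=3$; then Lemma \ref{lem:canonical_divisor}(2), which needs no hypothesis on $H_1(S^0)$, would force $3\cdot 2^6=192$ to be a perfect square, a contradiction (alternatively, quote \cite{Belousov-2008} directly, which bounds the number of singular points of such a surface by four).
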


Let $S$ be a $\mathbb{Q}$-homology $\mathbb{CP}^2$, and let $f\colon\tilde{S}\to S$ be the minimal resolution. Denote the smooth locus of $S$ by $S^0:=S\setminus \textrm{Sing}(S)$, and assume that $H_1(S^0;\mathbb{Z})=0$. Then by \cite[Lemma 3]{Hwang-Keum-2011-1}, the group $H_2(\tilde{S};\mathbb{Z})$ is free, and the intersection lattice on $H_2(\tilde{S};\mathbb{Z})$ is unimodular since $\tilde{S}$ is a closed, real 4-dimensional manifold. 

For a singularity $p\in \textup{Sing}(S)$, let $R_p$ denote the negative definite sublattice generated by the numerical classes of the components of $f^{-1}(p)$. The absolute value $|\det(R_p)|$ of its determinant equals the order $|G_p/[G_p,G_p]|=|H_1(L_p;\mathbb{Z})|$, where $L_p$ is the link of $S$ at $p$ and $G_p$ is the local fundamental group of $S$ at $p$, i.e., the fundamental group of $L_p$. For quotient singularities of index $\leq 2$, the values of $|G_p|$ and $|\det(R_p)|$ are given in Table \ref{tab:|det|}.

\begin{table}[t]
\centering
\begin{tabular}{c|c|c}
      Type  & $|G_p|$ &   $|\det(R_p)|$  \\ [-1em] &&\\ 
      \hline &&\\ [-1em]  
       $A_n$ $(n\geq 1)$ & $n+1$ & $n+1$  \\ [-1em] &&\\
       $D_n$ $(n\geq 4)$ & $4(n-2)$ & $4$  \\ [-1em] &&\\
       $E_6$ & $24$ & $3$  \\ [-1em]&& \\
       $E_7$ & $48$ & $2$ \\ [-1em]&& \\
       $E_8$ & $120$ & 1  \\ [-1em]&&\\
       $K_n$ $(n\geq 1)$ & $4n$ & $4n$ 
\end{tabular}
    \caption{$|G_p|$ and $|\det(R_p)|$ for singularities of types $A_n$, $D_n$, $E_n$, and $K_n$.}
    \label{tab:|det|} 
\end{table}

\begin{lemma}\label{lem:canonical_divisor} Let $S$ be a $\mathbb{Q}$-homology $\mathbb{CP}^2$ with smooth locus $S^0$.
\begin{enumerate}[label=\textup{(\arabic*)}]
    \item \cite[Lemma 3]{Hwang-Keum-2011-1} If $H_1(S^0;\mathbb{Z})=0$, then the numbers $|\det(R_p)|$ are pairwise relatively prime and the canonical divisor $K_S$ is not numerically trivial, i.e., $K_S$ is either ample or anti-ample. 
    \item \cite[Lemma 3.3]{Hwang-Keum-2011-2}, \cite[Lemma 2.3]{Hwang-Keum-Ohashi-2015} If $K_S$ is not numerically trivial, then \[
    D:=K_S^2 \cdot \prod_{p\in \textup{Sing}(S)}|\det(R_p)|
    \] 
    is a nonzero square number.   
\end{enumerate}
\end{lemma}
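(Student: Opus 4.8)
The plan is to reduce both statements to lattice theory on $H_2$ of the minimal resolution $\tilde{S}$, exploiting that $\tilde{S}$ is a closed oriented $4$-manifold. Let $f\colon \tilde{S}\to S$ be the minimal resolution, $E=f^{-1}(\mathrm{Sing}(S))$ the total exceptional divisor, and $R=\bigoplus_p R_p\subseteq H_2(\tilde{S};\mathbb{Z})$ the exceptional sublattice, which is negative definite of rank $r=\#\{\text{components of }E\}$. Since $S$ is a $\mathbb{Q}$-homology $\mathbb{CP}^2$, the form on $H_2(\tilde{S};\mathbb{Q})$ has signature $(1,r)$: indeed $H_2(\tilde{S};\mathbb{Q})=\langle f^*H\rangle_{\mathbb{Q}}\oplus R_{\mathbb{Q}}$, where $H$ is the ample generator of $\mathrm{Num}(S)$ (note $f^*H\cdot E_i=H\cdot f_*E_i=0$), so $R^{\perp}$ has rank $1$ and is positive definite. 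Throughout I write $A_L:=L^*/L$ for the discriminant group of a nondegenerate lattice $L$, so $|A_{R_p}|=|\det R_p|$.

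For part (1) I may assume, as recalled just above the lemma, that $N:=H_2(\tilde{S};\mathbb{Z})$ is free with unimodular form. First I would show that $H_1(S^0;\mathbb{Z})=0$ forces $R$ to be \emph{primitive} in $N$. Writing $\tilde{S}=T\cup_L W$, where $T$ is a regular neighborhood of $E$ (a tree plumbing, so $H_1(T)=0$ and $\partial T=L=\bigsqcup_p L_p$) and $W\simeq S^0$, the Mayer–Vietoris sequence, using $H_1(W)=H_1(S^0)=0$, $H_1(T)=0$, and $H_1(\tilde{S})=0$ (as $b_1(\tilde{S})=b_1(S)=0$ and $N$ is free), identifies $H_1(S^0)$ with the torsion of $N/R$, i.e.\ with the glue group $\bar{R}/R$, where $\bar{R}$ is the primitive closure. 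Hence $H_1(S^0)=0$ gives $\bar{R}=R$. Now the standard discriminant-group duality for a primitive sublattice of a unimodular lattice yields $A_R\cong A_{R^{\perp}}$ as finite abelian groups; since $R^{\perp}$ has rank $1$, $A_{R^{\perp}}\cong\mathbb{Z}/\delta$ is cyclic, so $A_R=\bigoplus_p A_{R_p}$ is cyclic. A direct sum of finite abelian groups can be cyclic only if the orders of the summands are pairwise coprime, which is precisely the claim that the $|\det R_p|$ are pairwise relatively prime. For $K_S\not\equiv 0$: if $K_S\equiv 0$ then $f^*K_S\equiv 0$, so the integral class $K_{\tilde{S}}$ equals the discrepancy divisor $\Delta=\sum a_iE_i\in R_{\mathbb{Q}}$; by primitivity $K_{\tilde{S}}\in N\cap R_{\mathbb{Q}}=R$, so $K_{\tilde{S}}$ is supported on the exceptional curves and has $K_{\tilde{S}}^2\le 0$. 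I would derive a contradiction from surface classification with $b_2^+(\tilde{S})=1$, using the structure theory of log Enriques surfaces (canonical covers being $K3$ or abelian) to force torsion into $H_2(\tilde{S})$, incompatible with the freeness of $N$.

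For part (2) I drop the hypothesis $H_1(S^0)=0$ and work with $N:=H_2(\tilde{S};\mathbb{Z})/\mathrm{torsion}$, whose form is unimodular by Poincaré duality. Let $\xi$ generate $R^{\perp}\subseteq N$ with $\xi^2=\delta>0$. Since $f^*K_S\in R^{\perp}_{\mathbb{Q}}$ and $\Delta\perp\xi$, I get $b:=K_{\tilde{S}}\cdot\xi=f^*K_S\cdot\xi\in\mathbb{Z}$ and $K_S^2=(f^*K_S)^2=b^2/\delta$, which is positive exactly because $K_S\not\equiv 0$. Multiplicativity of discriminants for the finite-index orthogonal sum $\langle\xi\rangle\oplus R\subseteq N$ gives $\delta\cdot\prod_p|\det R_p|=|\det N|\cdot k^2=k^2$ with $k=[N:\langle\xi\rangle\oplus R]$, so
\[
D=K_S^2\cdot\prod_p|\det R_p|=\frac{b^2}{\delta}\cdot\frac{k^2}{\delta}=\left(\frac{bk}{\delta}\right)^2,
\]
a nonzero square of a rational number. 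For integrality I write $K_S^2=K_{\tilde{S}}^2-\Delta^2$; since $\Delta\cdot E_i=K_{\tilde{S}}\cdot E_i\in\mathbb{Z}$ we have $\Delta\in R^*$, and $R^*\cdot R^*\subseteq\frac{1}{|\det R|}\mathbb{Z}$, so $K_S^2\in\frac{1}{|\det R|}\mathbb{Z}$ and hence $D=K_S^2\cdot|\det R|\in\mathbb{Z}$. A rational number whose square is an integer is itself an integer, so $D$ is a genuine nonzero perfect square.

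The lattice-theoretic core of both parts is routine once the hypotheses are translated correctly, and the identification of the Mayer–Vietoris connecting map with the linking form (to read off $H_1(S^0)\cong\bar{R}/R$) is standard but slightly technical. The real obstacle I anticipate is the final clause of part (1), namely ruling out $K_S\equiv 0$: for higher index the minimal resolution $\tilde{S}$ need not itself be a minimal surface of Kodaira dimension $0$, so the clean ``Enriques/bielliptic carries torsion'' argument does not apply verbatim and one must invoke the finer structure theory of log Enriques surfaces. Since this lemma is quoted from \cite{Hwang-Keum-2011-1, Hwang-Keum-2011-2, Hwang-Keum-Ohashi-2015}, I would ultimately defer that delicate step to those references.
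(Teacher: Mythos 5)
The paper never proves this lemma: both parts are quoted verbatim from \cite{Hwang-Keum-2011-1}, \cite{Hwang-Keum-2011-2}, and \cite{Hwang-Keum-Ohashi-2015}, so there is no in-paper argument to compare against, and your proposal is in effect a reconstruction of the references' proofs. Two of the three assertions you do reconstruct are proved correctly and completely. Part (2) is sound: $f^*K_S$ lies in the rank-one space $R^{\perp}_{\mathbb{Q}}=\mathbb{Q}\xi$, so $K_S^2=(f^*K_S)^2=b^2/\delta$ with $b=K_{\tilde S}\cdot\xi\in\mathbb{Z}$ and $b\neq 0$ exactly when $K_S\not\equiv 0$; discriminant multiplicativity for the finite-index sublattice $\langle\xi\rangle\oplus R\subseteq N$ gives $\delta\prod_p|\det R_p|=k^2$, hence $D=(bk/\delta)^2$; and the integrality step via $\Delta\in R^*$ and $R^*\cdot R^*\subseteq\frac{1}{|\det R|}\mathbb{Z}$ legitimately upgrades this to a perfect square. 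The coprimality claim in part (1) is also correct: $H_1(S^0;\mathbb{Z})=0$ forces $R$ to be primitive in the unimodular lattice $N$, and then $A_R\cong A_{R^{\perp}}\cong\mathbb{Z}/\delta$ is cyclic, so the orders $|\det R_p|$ are pairwise coprime. (Minor point: the identification $H_1(S^0;\mathbb{Z})\cong\bar R/R$ should be established \emph{before} imposing the vanishing hypothesis, most cleanly via the pair sequence for $(\tilde S,T)$ and Lefschetz duality, $H^2(S^0;\mathbb{Z})\cong H_2(\tilde S;\mathbb{Z})/R$; as written you assume $H_1(S^0)=0$ while deriving the isomorphism that is supposed to exploit it.)

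The genuine gap is the final clause of part (1), that $K_S\not\equiv 0$, and the repair you sketch would fail. If $K_S\equiv 0$, then $K_S$ is torsion in $\mathrm{Cl}(S)$ (numerically trivial divisors are torsion since $q(S)=0$); let $I$ be its order. For $I>1$ the canonical cover is \'etale over $S^0$ only, and what it produces is a surjection $H_1(S^0;\mathbb{Z})\to\mathbb{Z}/I$ --- \emph{not} torsion in $H_2(\tilde S;\mathbb{Z})$. Indeed, the minimal resolution of a log Enriques surface of index $I>1$ is typically a rational surface with torsion-free homology, so the contradiction you aim for (``torsion in $H_2(\tilde S)$ versus freeness of $N$'') can never materialize; the contradiction is with the hypothesis $H_1(S^0;\mathbb{Z})=0$ itself. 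In the remaining case $I=1$ we have $K_S\sim 0$, so $S$ is Gorenstein, its singularities are rational double points, and $K_{\tilde S}=f^*K_S\sim 0$; combined with $b_1(\tilde S)=0$ this forces $\tilde S$ to be a K3 surface, and then $R$ would be a negative definite sublattice of rank $b_2(\tilde S)-1=21$ in a lattice of signature $(3,19)$, which is impossible. Your primitivity observation $K_{\tilde S}\in R$ plays no role in either branch. Since this clause is precisely what the paper uses to discard the $31$ types with $K\equiv 0$ in Theorem \ref{thm:Gorenstein_list} and to guarantee $D\neq 0$ in part (2), leaving it to the references (as you acknowledge doing) means the proposal is incomplete as a standalone proof --- though, to be fair, that deferral matches the paper's own practice of citing rather than proving the lemma.
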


\subsection{Topological and Smooth Obstructions}\label{subsec:top_and_smooth_obstructions}
Let $S$ be a $\mathbb{Q}$-homology $\mathbb{CP}^2$, and let $f\colon\tilde{S}\to S$ be the minimal resolution. Each singularity $p\in \textrm{Sing}(S)$ has a neighborhood which is homeomorphic to the cone on $L_p$, where $L_p$ is the link of $S$ at $p$. Note that each $L_p$ is a rational homology $3$-sphere. By removing these cone neighborhoods, we obtain a compact, smooth $4$-manifold with $b_2=1$, which we denote by $W^0$. 

Observe that $W^0$ is homotopy equivalent to $S^0$, as $S^0$ deformation retracts onto $W^0$. The manifold $W^0$ naturally embeds in $\tilde{S}$; in fact $W^0$ is the complement of a regular neighborhood of $\bigcup_{p\in \textrm{Sing}(S)} f^{-1}(p)$ in $\tilde{S}$. With the orientation induced from the canonical orientation of $\tilde{S}$, the intersection form of $W^0$ is positive definite, i.e., $b_2^+(W^0)=b_2(W^0)=1$ \cite[Theorem 1.4.13]{Gompf-Stipsicz-1999}. The boundary $\partial W^0$ is the disjoint union $\coprod_{p\in \textrm{Sing}(S)}\left(-L_p\right)$, consisting of the links of singularities with opposite orientation.

In the case where $|\textrm{Sing}(S)|\geq 2$, by removing thickened arcs (i.e., 4-dimensional 3-handles) from $W^0$, we obtain a compact, oriented, smooth 4-manifold $Z^0$ with $b_2^+(Z^0)=b_2(Z^0)=1$ and $H_1(Z^0;\mathbb{Z})\cong H_1(W^0;\mathbb{Z})$. The intersection form of $Z^0$ is isomorphic to that of $W^0$, and the boundary of $Z^0$ is the connected sum $\#_{p\in \textrm{Sing}(S)} \left(-L_p\right)$.

\subsubsection{Topological Constraints}
A fundamental topological obstruction arises from the relationship between the intersection form of a $4$-manifold and the linking form of its boundary $3$-manifold.
\begin{definition}[Linking form] Let $Y$ be an oriented rational homology $3$-sphere. For disjoint oriented two knots $K_1, K_2$ in $Y$, choose a rational $2$-chain $c$ such that $\partial c=K_1$, and define \[ 
    \lambda_Y([K_1],[K_2])= c \cdot K_2. \]
This yields a well-defined nondegenerate symmetric bilinear form \[
    \lambda_Y\colon H_1(Y;\mathbb{Z})\times H_1(Y;\mathbb{Z})\to \mathbb{Q}/\mathbb{Z}.\] 
See \cite[Exercise 4.5.12(c)]{Gompf-Stipsicz-1999} for example. 
\end{definition}

The following lemma is well known; see \cite[Exercise 5.3.13(f),(g)]{Gompf-Stipsicz-1999} for example.

\begin{lemma}\label{lem:linking_form} Let $Y$ be a rational homology $3$-sphere. \begin{enumerate}[label=\textup{(\arabic*)}]
    \item Suppose $X$ is a compact, oriented $4$-manifold with $H_1(X;\Bbb Z)=0$ and $\partial X=Y$. If $A$ is a matrix representing the intersection form of $X$, then $H_1(Y;\Bbb Z)$ is isomorphic to the cokernel of $A\colon\Bbb Z^{b_2(X)}\to \Bbb Z^{b_2(X)}$, and $(-A)^{-1}$ represents the linking form on $H_1(Y;\Bbb Z)$. 
    \item If $Y$ is obtained from $S^3$ by integral surgery along an oriented, framed link $L\subset S^3$ with linking matrix $B$, then $(-B)^{-1}$ represents the linking form on $H_1(Y;\Bbb Z)$, with respect to the generating set given by meridians of the link components.
\end{enumerate}
\end{lemma}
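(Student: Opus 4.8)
The plan is to deduce both statements from the algebraic topology of the pair $(X,Y)$, treating (1) as the core computation and obtaining (2) as an immediate special case. Throughout I work with $H_2(X;\mathbb{Z})$, which is free: since $H_1(X;\mathbb{Z})=0$ and $Y$ is a rational homology $3$-sphere, a short diagram chase with Poincar\'e--Lefschetz duality shows $H_2(X;\mathbb{Z})$ injects into the free group $H^2(X;\mathbb{Z})\cong\operatorname{Hom}(H_2(X;\mathbb{Z}),\mathbb{Z})$. I then fix a basis so that the intersection form is represented by the matrix $A$.

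First I would write down the long exact sequence of the pair $(X,Y)$,
\[
H_2(X)\xrightarrow{\;j\;} H_2(X,Y)\xrightarrow{\;\partial\;} H_1(Y)\longrightarrow H_1(X)=0,
\]
so that $\partial$ is surjective and $H_1(Y)\cong\operatorname{coker}(j)$. Next I would invoke Poincar\'e--Lefschetz duality to identify $H_2(X,Y)\cong H^2(X)\cong\operatorname{Hom}(H_2(X),\mathbb{Z})$, and verify the standard fact that under this identification $j$ becomes the adjoint of the intersection form, $\alpha\mapsto(\beta\mapsto\alpha\cdot\beta)$. In the chosen basis and its dual basis, $j$ is therefore represented precisely by $A$, which yields the first assertion $H_1(Y)\cong\operatorname{coker}(A)=\mathbb{Z}^{b_2(X)}/A\,\mathbb{Z}^{b_2(X)}$.

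For the linking form I would compute $\lambda_Y$ geometrically using the identification above. Given classes represented by disjoint knots $K_1,K_2$ in $Y$, lift their classes through the surjection $\partial$ to relative classes $\tilde a,\tilde b\in H_2(X,Y)$, corresponding under duality to functionals $\phi_a,\phi_b\in\mathbb{Z}^{b_2(X)}$. Since $Y$ is a rational homology sphere, $[K_1]$ is torsion, so some multiple $N\tilde a$ lies in the image of $j$; writing $N\tilde a=j(\eta)$ translates into $N\phi_a=A\eta$, whence $A^{-1}\phi_a=\eta/N\in H_2(X)\otimes\mathbb{Q}$ is carried by a rational $2$-cycle in $X$ whose boundary realizes a multiple of $[K_1]$ in $Y$. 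Capping the associated rational $2$-chain against a surface representing $\tilde b$ and pushing it to the boundary, the defining intersection number $c\cdot K_2$ is computed inside $X$ as the evaluation $\langle\phi_b,A^{-1}\phi_a\rangle=\phi_a^{\,T}A^{-1}\phi_b\pmod{\mathbb{Z}}$, using the symmetry of $A$. Tracking the boundary-orientation convention built into the definition of $\lambda_Y$ introduces an overall sign, giving $\lambda_Y([K_1],[K_2])=\phi_a^{\,T}(-A)^{-1}\phi_b\pmod{\mathbb{Z}}$; thus $(-A)^{-1}$ represents the linking form.

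The hard part will be this last sign bookkeeping: the identification $H_1(Y)\cong\operatorname{coker}(A)$ is routine homological algebra, but matching the geometric definition of $\lambda_Y$ (a rational chain bounding $K_1$ inside $Y$, oriented by the boundary convention on $\partial X=Y$) with the rational inverse $A^{-1}$ of the intersection pairing requires care to land on $(-A)^{-1}$ rather than $A^{-1}$. Finally, statement (2) follows immediately by applying (1) to $X=X_L$, the $4$-manifold obtained from $B^4$ by attaching $2$-handles along the framed link $L$: here $H_1(X_L)=0$, the intersection form of $X_L$ is exactly the linking matrix $B$, and the standard basis of $H_2(X_L)$ given by the handle cores is dual, under $\partial$, to the meridians of the components of $L$. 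Hence $(-B)^{-1}$ represents $\lambda_Y$ in the meridian generating set, as claimed \cite{Gompf-Stipsicz-1999}.
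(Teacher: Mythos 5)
The paper offers no proof of this lemma at all: it is quoted as a well-known fact with a pointer to \cite[Exercise 5.3.13(f),(g)]{Gompf-Stipsicz-1999}. So your proposal is not an alternative to an argument in the text; it is essentially a solution to the cited exercise, and its skeleton is the standard (and correct) one. Each main step is sound: $H_1(X;\mathbb{Z})=0$ together with $H^1(Y;\mathbb{Z})=0$ gives that $H_2(X;\mathbb{Z})\cong H^2(X,Y;\mathbb{Z})$ injects into the free group $H^2(X;\mathbb{Z})\cong\operatorname{Hom}(H_2(X;\mathbb{Z}),\mathbb{Z})$, so $H_2(X;\mathbb{Z})$ is free; the long exact sequence of $(X,Y)$ plus Poincar\'e--Lefschetz duality and universal coefficients identify $j\colon H_2(X)\to H_2(X,Y)$ with the adjoint of the intersection form in dual bases, hence $H_1(Y;\mathbb{Z})\cong\operatorname{coker}(A)$; and part (2) correctly follows by applying part (1) to the $2$-handlebody $X_L$, whose intersection form is the linking matrix $B$ and whose cocores have the meridians as boundaries, so the generating set of $\operatorname{coker}(B)$ is the one claimed.

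The one step you assert rather than establish is the sign, and you flag this yourself. It does need to be closed: $A^{-1}$ and $(-A)^{-1}$ present non-isomorphic forms in general, and the paper's later quadratic-residue arguments (e.g.\ Lemma \ref{lem:index2_linkingform}) are sensitive to exactly this sign. A clean way to finish along your lines: choose relative cycles $F_a,F_b$ with $\partial F_a=K_1$, $\partial F_b=K_2$, and a $2$-chain $c'$ in $Y$ with $\partial c'=NK_1$; then $\Sigma:=NF_a-c'$ is a closed cycle with $j[\Sigma]=N\tilde a$, and duality gives the integer $\Sigma\cdot F_b=N\,\phi_a^{T}A^{-1}\phi_b$. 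Expanding $\Sigma\cdot F_b=N(F_a\cdot F_b)-c'\cdot F_b$ and pushing $c'$ into a collar yields $c'\cdot F_b=\sigma\,(c'\cdot_Y K_2)=\sigma N\lambda_Y([K_1],[K_2])$ for a sign $\sigma$ that is \emph{universal} (it depends only on collar-orientation conventions, not on $(X,Y)$), whence $\lambda_Y\equiv-\sigma\,\phi_a^{T}A^{-1}\phi_b \pmod{\mathbb{Z}}$. Universality means one example where the two candidates differ pins down $\sigma$: for $Y=L(3,1)=\partial X(3,1)$, a direct meridian-disk computation from the paper's definition gives $\lambda(\mu,\mu)=1/3$, matching $(-A)^{-1}=(1/3)$ rather than $A^{-1}\equiv(2/3)$, so $\sigma=+1$ and the stated formula follows. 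With that anchor, your argument is complete.
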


For a compact, oriented $4$-manifold $X$ and its intersection form \[
    Q_X\colon H_2(X;\Bbb Z)/\textrm{Tor}\times H_2(X;\Bbb Z)/\textrm{Tor}\to \Bbb Z,\] 
we simply denote the lattice $Q_X:=(H_2(X;\Bbb Z)/\textrm{Tor}, Q_X)$.

\begin{corollary}\label{cor:linking_form} Let $S$ be a $\mathbb{Q}$-homology $\mathbb{CP}^2$ such that $H_1(S^0;\mathbb{Z})=0$. Then the intersection forms $Q_{W^0}$ and $Q_{Z^0}$ of $W^0$ and $Z^0$, respectively, are both represented by the $1\times 1$ matrix $(n)$, where \[
n=\prod_{p\in \textup{Sing}(S)} |H_1(L_p;\mathbb{Z})|.
\]
Moreover, $H_1(\partial W^0;\mathbb{Z})\cong H_1(\partial Z^0;\mathbb{Z})$ is a cyclic group of order $n$, and $H_1(L_p;\mathbb{Z})$ is cyclic for each $p\in \textup{Sing}(S)$. 
\end{corollary}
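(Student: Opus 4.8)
The plan is to deduce the entire statement from Lemma \ref{lem:linking_form}(1), applied to $W^0$ and to $Z^0$, once we record that both manifolds have vanishing first integral homology together with a positive definite, rank-one intersection form. In this sense the corollary is essentially a repackaging of the linking-form lemma, and the real work is to verify its two hypotheses and then match orders.

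First I would observe that $H_1(W^0;\mathbb{Z})=0$. Since $S^0$ deformation retracts onto $W^0$, the two spaces are homotopy equivalent, so the hypothesis $H_1(S^0;\mathbb{Z})=0$ gives $H_1(W^0;\mathbb{Z})=0$. Because $b_2(W^0)=b_2^+(W^0)=1$, the intersection form $Q_{W^0}$ is positive definite of rank one over $\mathbb{Z}$, hence represented by a $1\times 1$ matrix $(m)$ with $m\geq 1$. Lemma \ref{lem:linking_form}(1) then applies with $X=W^0$ and $A=(m)$: it identifies $H_1(\partial W^0;\mathbb{Z})$ with the cokernel of $(m)\colon\mathbb{Z}\to\mathbb{Z}$, namely the cyclic group $\mathbb{Z}/m\mathbb{Z}$ of order $m$.

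Next I would compute $H_1(\partial W^0;\mathbb{Z})$ directly from the description of the boundary. Since $\partial W^0=\coprod_{p\in\textup{Sing}(S)}(-L_p)$ and each $L_p$ is a rational homology $3$-sphere, we have $H_1(\partial W^0;\mathbb{Z})\cong\bigoplus_{p}H_1(L_p;\mathbb{Z})$, a finite group of order $n=\prod_p|H_1(L_p;\mathbb{Z})|$. Comparing this with the previous step forces $m=n$, so $Q_{W^0}=(n)$ and $H_1(\partial W^0;\mathbb{Z})\cong\mathbb{Z}/n\mathbb{Z}$. Cyclicity of each factor is then automatic: a direct summand of a finite cyclic group is itself a quotient of that cyclic group, hence cyclic, so each $H_1(L_p;\mathbb{Z})$ is cyclic.

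Finally, the statements for $Z^0$ follow by the same mechanism. By construction $H_1(Z^0;\mathbb{Z})\cong H_1(W^0;\mathbb{Z})=0$, the intersection form of $Z^0$ is isomorphic to that of $W^0$ and hence equal to $(n)$, and $\partial Z^0=\#_p(-L_p)$ gives $H_1(\partial Z^0;\mathbb{Z})\cong\bigoplus_p H_1(L_p;\mathbb{Z})\cong\mathbb{Z}/n\mathbb{Z}$; alternatively one may re-run Lemma \ref{lem:linking_form}(1) directly on $Z^0$. I do not anticipate a genuine obstacle here, since the essential input is already contained in Lemma \ref{lem:linking_form}; the only points requiring care are the orientation bookkeeping in $\partial W^0=\coprod(-L_p)$ (the sign does not affect $H_1$) and the elementary group-theoretic fact that a cyclic group splits only into pairwise coprime cyclic summands, which yields both the cyclicity of each $H_1(L_p;\mathbb{Z})$ and consistency with Lemma \ref{lem:canonical_divisor}(1).
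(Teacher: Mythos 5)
Your proposal is correct and follows essentially the same route as the paper: vanishing $H_1$ of the complement, the rank-one positive definite intersection form, Lemma \ref{lem:linking_form}(1), and the identification of the boundary's first homology with $\bigoplus_{p}H_1(L_p;\mathbb{Z})$, from which cyclicity of each summand falls out. The one point needing care is that your primary application of Lemma \ref{lem:linking_form}(1) is to $X=W^0$, whose boundary $\coprod_{p}(-L_p)$ is disconnected whenever $S$ has more than one singularity, while the lemma as stated assumes $\partial X=Y$ is a rational homology $3$-sphere, hence connected; this is precisely why the paper runs the argument on $Z^0$ first (its boundary is the connected sum $\#_{p}(-L_p)$, a genuine rational homology sphere) and then transfers the conclusion to $W^0$ via $Q_{W^0}\cong Q_{Z^0}$. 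Since the cokernel statement does extend to disjoint unions of rational homology spheres by the same Poincar\'e--Lefschetz argument, and since you explicitly offer the $Z^0$ alternative, which is exactly the paper's proof, this is a matter of ordering and hypothesis bookkeeping rather than a gap.
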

\begin{proof} Recall that $Q_{Z^0}\cong Q_{W^0}$. Since $b_2^+(Z^0)=b_2(Z^0)=1$, the intersection form $Q_{Z^0}$ is represented by a $1\times 1$ matrix $(n)$ for some positive integer $n$. By Lemma \ref{lem:linking_form}(1), $H_1(\partial Z^0;\mathbb{Z})$ is isomorphic to $\mathbb{Z}_n$. Since $\partial Z^0=\#_{p\in\textrm{Sing}(S)} (-L_p)$, it follows that each $H_1(L_p;\mathbb{Z})$ is cyclic, and that \[
n=|H_1\left(\partial Z^0;\mathbb{Z}\right)|=\left|\bigoplus_{p\in \textup{Sing}(S)} H_1(L_p;\mathbb{Z}) \right|=\prod_{p\in \textup{Sing}(S)} |H_1(L_p;\mathbb{Z})|.
\]
\end{proof}

\subsubsection{Donaldson's Diagonalization Theorem}\label{subsubsec:Donaldson}
Donaldson's diagonalization theorem imposes a significant constraint on the intersection forms of closed, oriented, \textit{smooth}, \textit{definite} $4$-manifolds. This constraint can be utilized to derive a condition for a smooth $4$-manifold with a specified $3$-manifold as its boundary, particularly when it bounds a smooth definite $4$-manifold. The argument presented here is inspired by Lisca's remarkable work \cite{Lisca-2007}.

For a positive integer $n$, let $\{e_1,\dots,e_n\}$ be the standard basis for $\Bbb Z^n$. We denote by $-\Bbb Z^n$ the standard negative definite lattice $(\Bbb Z^n, \langle\cdot,\cdot\rangle)$, where the pairing is given by $\langle e_i,e_j\rangle = -\delta_{i,j}$, with $\delta_{i,j}$ being the Kronecker delta. 

\begin{theorem}[Donaldson's Diagonalization Theorem, {\cite{Donaldson-1983,Donaldson-1987}}]\label{thm:Donaldson} If the intersection form $Q_W$ of a closed, oriented, smooth $4$-manifold $W$ is negative definite, then $Q_W$ is isomorphic to $-\Bbb Z^n$, where $n=b_2(W)=b_2^-(W)$.     
\end{theorem}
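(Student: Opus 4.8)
This statement is Donaldson's diagonalization theorem, and the proof I would give is gauge-theoretic, reducing the topological claim to a lattice-counting statement that is then settled by elementary means. Since $Q_W$ is negative definite we have $b_2^+(W)=0$, and the goal is to show that the unimodular lattice $(H_2(W;\mathbb{Z})/\mathrm{Tor}, Q_W)$, of rank $n=b_2(W)$, is isomorphic to $-\mathbb{Z}^n$. First I would fix a Riemannian metric $g$ on $W$ and a principal $SU(2)$-bundle $P\to W$ with $c_2(P)=1$, and consider the moduli space $\mathcal{M}$ of \emph{anti-self-dual} connections on $P$ modulo gauge equivalence. The entire content lies in understanding the global shape of $\mathcal{M}$.

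Three structural facts are needed. First, for a generic metric $g$ the irreducible part $\mathcal{M}^*$ is a smooth manifold of dimension $8c_2(P)-3(1+b_2^+(W))=8-3=5$; this is a transversality statement, proved by applying the Sard--Smale theorem to the self-duality operator over a suitable Banach space of metrics or perturbations. Second, by the Uhlenbeck compactness theorem the only failure of compactness comes from curvature concentrating at a single point, so a neighborhood of the ideal boundary is diffeomorphic to a collar $W\times(0,\varepsilon)$; thus $\mathcal{M}$ has one end modeled on $W$ itself. Third, the reducible anti-self-dual connections are isolated points of $\mathcal{M}$, corresponding to splittings $E=L\oplus L^{-1}$ with $c_1(L)^2=-1$; because $b_2^+(W)=0$ every harmonic $2$-form is anti-self-dual, so each class $\alpha\in H_2(W;\mathbb{Z})/\mathrm{Tor}$ with $Q_W(\alpha,\alpha)=-1$ (taken up to sign) yields exactly one reducible. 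A Kuranishi-model computation then shows that near each reducible $\mathcal{M}$ is homeomorphic to a cone on $\mathbb{CP}^2$.

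Granting this picture, I would truncate the single end and excise a small open cone around each reducible point, obtaining a compact oriented smooth $5$-manifold $\overline{\mathcal{M}}$ whose boundary is the disjoint union of $W$ and one copy of $\mathbb{CP}^2$ for each reducible. Hence $W$ is oriented-cobordant to $r$ copies of $\mathbb{CP}^2$, where $r$ is the number of reducibles. Since four-dimensional oriented bordism is detected by signature ($\Omega_4^{SO}\cong\mathbb{Z}$), comparing signatures gives, after the orientation bookkeeping at the cone points, $\sigma(W)=-r$; as $\sigma(W)=-n$ this forces $r=n$, i.e.\ the lattice contains exactly $2n$ vectors of square $-1$. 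The theorem now follows by induction on $n$: choosing $e$ with $Q_W(e,e)=-1$, unimodularity yields an orthogonal splitting $\mathbb{Z}e\oplus e^\perp$, and a short computation shows the square-$(-1)$ vectors of the lattice are precisely $\pm e$ together with those of $e^\perp$. Thus $e^\perp$ is negative definite unimodular of rank $n-1$ with $2(n-1)$ such vectors, and iterating produces an orthonormal basis, identifying the lattice with $-\mathbb{Z}^n$.

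The main obstacle is the analytic core, namely the three structural facts above: smoothness of $\mathcal{M}^*$ for generic $g$, Uhlenbeck's bubbling analysis governing the end, and the identification of the local model at a reducible as a cone on $\mathbb{CP}^2$ with the orientation that makes the signature come out to $-r$ rather than $+r$. Establishing a coherent orientation of the moduli space, needed to run the cobordism argument, is itself a substantial result. By contrast the signature comparison and the lattice induction are elementary. An alternative that softens the analysis is to run the Seiberg--Witten argument for $b_2^+=0$, which bounds the squares of characteristic vectors and then invokes Elkies' characterization of $-\mathbb{Z}^n$ as the unique negative definite unimodular lattice realizing the extremal value; the logical skeleton is the same, only the hard PDE input changes.
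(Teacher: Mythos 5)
The paper contains no proof of this statement: it is imported as a black box from Donaldson's 1983 and 1987 papers, and is only ever \emph{applied} (in Corollaries \ref{cor:Donaldson} and \ref{cor:orthogonal_complement}). So the only meaningful comparison is with the cited literature, and your sketch is precisely the argument given there: the charge-one ASD $SU(2)$ moduli space, generic transversality producing a five-dimensional irreducible stratum, a collar end modeled on $W$, cone-on-$\mathbb{CP}^2$ singularities at the reducibles, the resulting oriented cobordism and signature count, and finally an elementary lattice argument. Your concluding induction (splitting off $\mathbb{Z}e\oplus e^\perp$ and counting square-$(-1)$ vectors) is complete and correct, and your assessment that the real content is the analytic package --- transversality, compactness, the Kuranishi cone model, and orientability of the moduli space --- is accurate, as is the Seiberg--Witten/Elkies alternative you mention.

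Three caveats, one of which matters for the generality actually needed here. First, your dimension formula $8c_2-3(1+b_2^+)$ and the claim that reducibles are isolated points both require $b_1(W)=0$: in general the formula is $8c_2-3(1-b_1+b_2^+)$, and the reducibles attached to a class $\alpha$ with $\alpha^2=-1$ sweep out a $b_1$-dimensional torus of ASD line-bundle connections, destroying the cone picture. The theorem as stated --- and as the paper uses it, applied to the closed manifold $(-W^0)\cup_{\partial} X$, on which there is no control of $\pi_1$ or $b_1$ --- has no such hypothesis; you must either invoke Donaldson's 1987 extension or first reduce to $b_1=0$ by surgering out embedded circles generating the free part of $H_1(W;\mathbb{Z})$, which leaves the intersection form unchanged. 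Second, the collar structure of the end is not a consequence of Uhlenbeck compactness alone: one also needs Taubes' existence/gluing theorem to know that \emph{every} point of $W$ arises from concentrated instantons, so that the truncated boundary is all of $W$. Third, your assertion that orientation bookkeeping yields $\sigma(W)=-r$ on the nose is stronger than what the argument requires and is exactly the delicate point; the standard route only uses $|\sigma(W)|\leq r$ with uncontrolled signs, since Cauchy--Schwarz forces distinct pairs of square-$(-1)$ classes to be orthogonal, giving $r\leq n$ for free, whence $r=n$ and your induction (or the direct orthogonal splitting) finishes the proof.
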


Then we obtain the following corollary for $\mathbb{Q}$-homology $\mathbb{CP}^2$ with quotient singularities.

\begin{corollary}\label{cor:Donaldson} Let $S$ be a $\mathbb{Q}$-homology $\mathbb{CP}^2$. Suppose that for each $p\in \textup{Sing}(S)$, the orientation reversal $-L_p$ of the link $L_p$ of $S$ at $p$ bounds a compact, oriented, negative definite, smooth $4$-manifold $X_p$. Then the lattice $\bigoplus_{p\in \textup{Sing}(S)} Q_{X_p}$ embeds into $-\mathbb{Z}^{n+1}$, where $n=\sum_{p\in \textup{Sing}(S)} b_2(X_p)$. 
\end{corollary}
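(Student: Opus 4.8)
The plan is to build a single closed, oriented, smooth, negative definite $4$-manifold by gluing together the pieces provided by the hypothesis and the piece $Z^0$ coming from $S$ itself, then apply Donaldson's theorem (Theorem \ref{thm:Donaldson}) to deduce that the resulting intersection form is diagonal, and finally read off the desired embedding. First I would recall from Section \ref{subsec:top_and_smooth_obstructions} that, since $|\textup{Sing}(S)|\geq 2$ is implicitly the interesting case (when there is a single singularity the statement is essentially immediate from the definition of $Z^0$), we have the smooth $4$-manifold $Z^0$ with $b_2^+(Z^0)=b_2(Z^0)=1$ and boundary $\partial Z^0 = \#_{p\in \textup{Sing}(S)}(-L_p)$. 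The key idea is that $Z^0$ has a \emph{positive} definite intersection form, so reversing its orientation yields $-Z^0$, a negative definite $4$-manifold with $\partial(-Z^0) = \#_{p}L_p$.

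Next I would assemble the closed manifold. For each $p\in \textup{Sing}(S)$ the hypothesis supplies a compact, oriented, negative definite smooth $4$-manifold $X_p$ with $\partial X_p = -L_p$. Taking the boundary connected sum $X:=\natural_{p\in\textup{Sing}(S)} X_p$ produces a negative definite $4$-manifold with $\partial X = \#_p(-L_p) = \partial Z^0$ and $Q_X \cong \bigoplus_p Q_{X_p}$, with $b_2(X)=n=\sum_p b_2(X_p)$. Now I would glue $X$ to $-Z^0$ along their common boundary: since $\partial(-Z^0)=\#_p L_p = -\partial X$, orientations match and the result
\[
W := (-Z^0) \cup_{\partial} X
\]
is a closed, oriented, smooth $4$-manifold. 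A Mayer--Vietoris computation shows its intersection form is (up to the torsion that plays no role in the lattice $Q_W$) the orthogonal sum $Q_{-Z^0} \oplus Q_X$; because both summands are negative definite, $Q_W$ is negative definite with $b_2(W) = 1 + n = n+1$. Here I would need to be a little careful: the gluing is along a rational homology sphere, so $H_2$ of the two pieces sit inside $H_2(W)$ as orthogonal sublattices whose direct sum has finite index, and the unimodularity of $Q_W$ is what forces it to actually be a full-rank definite lattice of rank $n+1$.

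Then Donaldson's diagonalization theorem applies: $Q_W \cong -\mathbb{Z}^{n+1}$. The sublattice $\bigoplus_p Q_{X_p} = Q_X$ embeds isometrically into $Q_W$ as the image of $H_2(X)\to H_2(W)$, hence into $-\mathbb{Z}^{n+1}$, which is exactly the assertion. The main obstacle I expect is the bookkeeping in the gluing step, specifically verifying that $Q_W$ is genuinely negative definite and that $Q_X$ embeds as a \emph{primitive-enough} orthogonal summand for the argument to go through. The torsion in $H_1(\partial X;\mathbb{Z})\cong \mathbb{Z}_n$ (by Corollary \ref{cor:linking_form}) means $Q_W$ need not be the naive direct sum of the two forms on the nose, but since we only care about the free parts and the signature is additive under gluing along rational homology spheres (the boundary contributes no $b_2$), the orthogonality of the two images and the negative definiteness of each piece suffice to conclude that $Q_W$ is negative definite of rank $n+1$, and that $Q_X$ sits inside it as a sublattice, giving the embedding $\bigoplus_{p\in\textup{Sing}(S)} Q_{X_p} \hookrightarrow -\mathbb{Z}^{n+1}$.
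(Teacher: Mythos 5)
Your proof is correct and follows the same overall strategy as the paper's: cap off the complement of the singularities with the given negative definite fillings, apply Donaldson's diagonalization theorem (Theorem \ref{thm:Donaldson}) to the resulting closed, negative definite $4$-manifold of $b_2=n+1$, and use vanishing of $H_2$ of the gluing $3$-manifold to see that the pieces inject into $-\mathbb{Z}^{n+1}$. The difference is in the decomposition: the paper works with $W^0$, whose boundary is the \emph{disjoint union} $\coprod_{p}(-L_p)$, and caps it off with the disjoint union $X=\coprod_p X_p$, so the gluing takes place along the disconnected $3$-manifold $\coprod_p L_p$; you instead work with $Z^0$, whose boundary is the connected sum $\#_p(-L_p)$, which forces you to first form the boundary connected sum $\natural_p X_p$. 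The paper's choice buys two small simplifications that your route handles by hand: there is no case distinction at $|\textup{Sing}(S)|=1$ (where $Z^0$ is not even defined in the paper; your parenthetical that the statement is then ``immediate'' is not quite right --- one still needs the full gluing-plus-Donaldson argument, just with $W^0$ playing the role of $Z^0$), and there is no need to introduce boundary connected sums at all. Also, your closing concern about unimodularity is unnecessary: injectivity of $Q_{-Z^0}\oplus Q_X \to Q_W$ follows directly from Mayer--Vietoris together with $H_2(\#_p L_p;\mathbb{Z})=0$ (exactly as the paper invokes $H_2(L_p;\mathbb{Z})=0$), and negative definiteness of $Q_W$ follows from Novikov additivity of the signature, so the embedding needs no appeal to unimodularity of $Q_W$.
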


\begin{proof} Consider the smooth 4-manifold $W^0$ constructed as above. The disjoint union $X:=\coprod_{p\in \textrm{Sing}(S)} X_p$ is a negative definite 4-manifold whose boundary is $\coprod_{p\in \textrm{Sing}(S)} \left(-L_p\right)$ and satisfies $Q_X \cong \bigoplus_{p\in \textup{Sing}(S)} Q_{X_p}$. Now define $W:=(-W^0)\cup_{\partial} X$. This is a closed, oriented, smooth, negative definite 4-manifold with $b_2(W)=n+1$. By Theorem \ref{thm:Donaldson}, $Q_W$ is isomorphic to $-\mathbb{Z}^{n+1}$. Since $H_2(L_p;\mathbb{Z})=0$ for each $p\in \textup{Sing}(S)$, the inclusions of $-W^0$ and $X$ into $W$ induce an embedding $\iota\colon Q_{-W^0}\oplus Q_X \hookrightarrow Q_W$ of lattices. \end{proof}

In the case where the smooth locus has trivial first integral homology group, we can provide an additional condition using the following proposition.

\begin{proposition}[{\cite[Lemma 2.4]{AMP-2022}}]\label{prop:orthogonal_complement} Let $Y$ be an oriented $3$-manifold with $H^1(Y;\mathbb{Z})=0$ which is the boundary of two compact, oriented $4$-manifolds $X_1$ and $X_2$ with $H_1(X_1;\Bbb Z)=0$. If $X$ is a closed, oriented $4$-manifold obtained by $X:=X_1\cup_Y (-X_2)$, then the inclusions $X_1,-X_2\hookrightarrow X$ induce an embedding of lattices \[
  \iota\colon Q_{X_1}\oplus (-Q_{X_2}) \to Q_X
\]
such that $\iota(-Q_{X_2})$ is the orthogonal complement of $\iota(Q_{X_1})$ in $Q_X$.     
\end{proposition}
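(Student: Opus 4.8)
The plan is to extract the lattice embedding from a Mayer--Vietoris computation and then pin down the orthogonal complement using unimodularity of $Q_X$ together with the hypothesis $H_1(X_1;\mathbb{Z})=0$. First I would record that $H^1(Y;\mathbb{Z})=0$ forces $H_1(Y;\mathbb{Z})$ to be finite, so $Y$ is a rational homology $3$-sphere; in particular $H_2(Y;\mathbb{Z})=0$ and $H_*(Y;\mathbb{Q})$ is concentrated in degrees $0$ and $3$. Writing $X=X_1\cup_Y(-X_2)$ and running the Mayer--Vietoris sequence, the vanishing of $H_2(Y;\mathbb{Z})$ gives a short exact sequence $0\to H_2(X_1)\oplus H_2(X_2)\xrightarrow{(i_1,i_2)} H_2(X)\to C\to 0$ with $C\hookrightarrow H_1(Y)$ finite. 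Passing to the free quotients yields an injection of finite index, which is the desired map $\iota$ at the level of underlying groups; since $X$ is closed and oriented, $Q_X$ is unimodular by Poincar\'e duality.

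Next I would check that $\iota$ is an isometry onto its image with the claimed orthogonal splitting. A class coming from $X_i$ can be represented by a cycle in the interior of $X_i$, so its self- and mutual intersections are computed inside $X_i$; the orientation reversal on $X_2$ contributes the sign, so $Q_X$ restricts to $Q_{X_1}$ and to $-Q_{X_2}$ on the two summands. Classes from $X_1$ and from $X_2$ have representatives in disjoint interiors, so their intersection vanishes. Setting $L_1:=\iota(Q_{X_1})$ and $L_2:=\iota(-Q_{X_2})$, this already proves that $\iota$ is an embedding of lattices and that $L_2\subseteq L_1^\perp$.

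It remains to upgrade the inclusion $L_2\subseteq L_1^\perp$ to an equality, which is the heart of the argument. Since $\partial X_1=Y$ is a rational homology sphere, $Q_{X_1}$ is nondegenerate, so $L_1\cap L_1^\perp=0$ and hence $L_1\oplus L_2$ is a direct sum of finite index in $Q_X$. Given $x\in L_1^\perp$, finite index yields $mx=\ell_1+\ell_2$ with $\ell_i\in L_i$ and $m\geq1$; pairing against $L_1$ and using orthogonality forces $\ell_1\in L_1\cap L_1^\perp=0$, hence $mx\in L_2$. The final---and most delicate---step is to show that $L_2$ is \emph{saturated} in $Q_X$, for then $mx\in L_2$ implies $x\in L_2$, giving $L_1^\perp\subseteq L_2$ and thus equality. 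This is exactly where $H_1(X_1;\mathbb{Z})=0$ enters: by the long exact sequence of the pair $(X,X_2)$ and excision, $H_2(X)/\operatorname{im}(i_2)$ injects into $H_2(X,X_2)\cong H_2(X_1,Y)$, and Lefschetz duality together with the universal coefficient theorem identifies $H_2(X_1,Y)\cong H^2(X_1)\cong\operatorname{Hom}(H_2(X_1),\mathbb{Z})$, which is torsion-free precisely because $\operatorname{Ext}(H_1(X_1),\mathbb{Z})=0$. Consequently $\operatorname{im}(i_2)$, and therefore $L_2$, is primitive. I expect the bookkeeping in this saturation step---correctly tracking the torsion subgroup of $H_2(X)$ and the excision identifications---to be the main obstacle, whereas the Mayer--Vietoris splitting and the orthogonality of the two summands are routine.
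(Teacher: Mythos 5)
The paper gives no proof of this proposition at all --- it is imported wholesale as \cite[Lemma 2.4]{AMP-2022} --- so there is no in-paper argument to compare against; your proof must stand on its own, and it does. All three stages check out. First, $H^1(Y;\mathbb{Z})=0$ does force $H_1(Y;\mathbb{Z})$ finite and $H_2(Y;\mathbb{Z})=0$, so Mayer--Vietoris gives the short exact sequence $0\to H_2(X_1)\oplus H_2(X_2)\to H_2(X)\to C\to 0$ with $C$ finite, and injectivity survives passage to free quotients (an element mapping to torsion is itself torsion, by injectivity), giving the finite-index embedding $\iota$. Second, representing classes by cycles in the disjoint interiors correctly yields the isometry onto the image, the sign on the $X_2$ factor, and $L_2\subseteq L_1^\perp$. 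Third, the saturation step --- the only place the hypothesis $H_1(X_1;\mathbb{Z})=0$ enters --- is exactly right: $H_2(X)/\operatorname{im}(i_2)\hookrightarrow H_2(X,X_2)\cong H_2(X_1,Y)\cong H^2(X_1)\cong \operatorname{Hom}(H_2(X_1),\mathbb{Z})$ is torsion-free, and the bookkeeping you flag does go through: if $m\tilde{x}\in \operatorname{im}(i_2)+\operatorname{Tor}(H_2(X))$, then in the torsion-free quotient $H_2(X)/\operatorname{im}(i_2)$ the torsion class dies, so $m[\tilde{x}]=0$, hence $[\tilde{x}]=0$ and $x\in L_2$. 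Combined with your finite-index argument (where $\ell_1\in L_1\cap L_1^{\perp}=0$ uses nondegeneracy of $Q_{X_1}$, valid because $Y$ is a rational homology sphere), this gives $L_1^{\perp}\subseteq L_2$ and hence equality. Two minor remarks: unimodularity of $Q_X$, though true, is never actually used in your argument, and the hypothesis needed in the saturation step is only that $\operatorname{Ext}(H_1(X_1;\mathbb{Z}),\mathbb{Z})=0$, i.e.\ that $H_1(X_1;\mathbb{Z})$ is torsion-free, so your proof in fact establishes a slightly stronger statement than the one quoted.
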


Then, we have the following corollary for a $\mathbb{Q}$-homology $\mathbb{CP}^2$ with $H_1(S^0;\mathbb{Z})=0$.

\begin{corollary}\label{cor:orthogonal_complement} Let $S$ be a $\mathbb{Q}$-homology $\mathbb{CP}^2$ such that $H_1(S^0;\mathbb{Z})=0$. Suppose that, for each $p\in \textup{Sing}(S)$, the orientation reversal $-L_p$ of the link $L_p$ of $S$ at $p$ bounds a compact, oriented, negative definite, smooth $4$-manifold $X_p$ with $H_1(X_p;\mathbb{Z})=0$. Then there exists an embedding \[
    \iota\colon\bigoplus_{p\in \textup{Sing}(S)}Q_{X_p}\hookrightarrow -\mathbb{Z}^{n+1}\] 
of lattices, where $n=\sum_{p\in \textup{Sing}(S)} b_2(X_p)$, such that the generator (uniquely determined up to sign) of the orthogonal complement of $\iota\left(\bigoplus_{p\in \textup{Sing}(S)}Q_{X_p}\right)$ in $-\mathbb{Z}^{n+1}$ has square $-\prod_{p\in \textup{Sing}(S)}|H_1(L_p;\mathbb{Z})|$.   
\end{corollary}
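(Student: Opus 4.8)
The plan is to build a single closed, negative definite, smooth $4$-manifold by capping off the neighborhood-complement with (orientation-reversed) caps for the links, and then to read off the desired lattice embedding from Donaldson's theorem (Theorem~\ref{thm:Donaldson}) together with Proposition~\ref{prop:orthogonal_complement}. The one point that requires care is the \emph{direction} in which Proposition~\ref{prop:orthogonal_complement} is applied: I will arrange the roles so that the rank-one summand appears precisely as the orthogonal complement whose generator we must understand, thereby sidestepping any saturation (double orthogonal complement) argument.

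First I would fix a reference $4$-manifold with connected boundary. When $S$ has at least two singularities, let $V:=Z^0$; when $S$ has a single singularity, let $V:=W^0$. In either case the construction of Section~\ref{subsec:top_and_smooth_obstructions} gives $\partial V=\#_{p\in\textup{Sing}(S)}(-L_p)$ and $H_1(V;\mathbb{Z})=0$ (using that $W^0\simeq S^0$, that $H_1(S^0;\mathbb{Z})=0$, and that $H_1(Z^0;\mathbb{Z})\cong H_1(W^0;\mathbb{Z})$), and by Corollary~\ref{cor:linking_form} the form $Q_V$ is represented by the $1\times 1$ matrix $(N)$ with $N=\prod_{p}|H_1(L_p;\mathbb{Z})|$; in particular $V$ is positive definite of rank one. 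Next, form the boundary connected sum $X:=\natural_{p\in\textup{Sing}(S)} X_p$. Since each $X_p$ is negative definite with $H_1(X_p;\mathbb{Z})=0$, the manifold $X$ is negative definite with $H_1(X;\mathbb{Z})=0$, $Q_X\cong\bigoplus_p Q_{X_p}$, $b_2(X)=n$, and $\partial X=\#_p(-L_p)=\partial V$.

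Set $Y:=\#_p(-L_p)$; being a connected sum of rational homology spheres, $Y$ is itself a rational homology sphere, so $H^1(Y;\mathbb{Z})=0$. I would then apply Proposition~\ref{prop:orthogonal_complement} with $X_1:=X$ (which has $H_1(X_1;\mathbb{Z})=0$) and $X_2:=V$, forming the closed, oriented, smooth $4$-manifold $\widehat{X}:=X\cup_Y(-V)$. The proposition yields an embedding of lattices $\iota\colon Q_X\oplus(-Q_V)\hookrightarrow Q_{\widehat X}$ such that $\iota(-Q_V)$ is the orthogonal complement of $\iota(Q_X)$ in $Q_{\widehat X}$. Because $X$ and $-V$ are both negative definite and are glued along a rational homology sphere, $\widehat X$ is closed, oriented, negative definite with $b_2(\widehat X)=b_2(X)+b_2(V)=n+1$; hence by Theorem~\ref{thm:Donaldson} there is an isometry $Q_{\widehat X}\cong -\mathbb{Z}^{n+1}$.

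Composing, I obtain $\iota\colon\bigoplus_p Q_{X_p}=Q_X\hookrightarrow -\mathbb{Z}^{n+1}$, and the orthogonal complement of its image is exactly $\iota(-Q_V)$. Since $-Q_V=(-N)$ is the rank-one lattice spanned by a generator $g$ with $\langle g,g\rangle=-N$, and since an orthogonal complement is automatically a saturated (primitive) sublattice, the vector $\iota(g)$ is, up to sign, the generator of the orthogonal complement, and it has square $-N=-\prod_{p}|H_1(L_p;\mathbb{Z})|$, as claimed. The only genuine subtlety is the bookkeeping just described: had I instead set $X_1:=V$, the proposition would identify the orthogonal complement of the \emph{rank-one} image rather than that of $\bigoplus_p Q_{X_p}$, and recovering the latter would require proving that the rank-one image is primitive (equivalently, computing a double orthogonal complement). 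Taking $X_1=X$ makes the rank-one lattice appear directly as the relevant orthogonal complement, so that issue never arises.
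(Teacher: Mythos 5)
Your proof is correct and follows essentially the same route as the paper: cap off the singularity-complement with the negative definite fillings, apply Donaldson's theorem to the resulting closed negative definite $4$-manifold, and apply Proposition \ref{prop:orthogonal_complement} with $X_1$ equal to the caps, so that the rank-one piece (identified as $(N)$ via Corollary \ref{cor:linking_form}) appears directly as the orthogonal complement. The only difference is cosmetic: the paper glues the disjoint union $\coprod_{p} X_p$ to $-W^0$ (Proposition \ref{prop:orthogonal_complement} does not require $Y$ to be connected), whereas you work with the connected boundary of $Z^0$ and the boundary connected sum $\natural_{p} X_p$.
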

\begin{proof} Consider the situation in the proof of Corollary \ref{cor:Donaldson}. In this case $H_1(X;\mathbb{Z})=0$, so by Proposition \ref{prop:orthogonal_complement} $\iota(Q_{-W^0})$ is the orthogonal complement of $\iota(Q_X)$ in $Q_W\cong -\mathbb{Z}^{n+1}$. On the other hand, since $H_1(W^0;\mathbb{Z})=H_1(S^0;\mathbb{Z})=0$, we have $Q_{-W^0}\cong -Q_{W^0}$, which is represented by $\left(-\prod_{p\in \textup{Sing}(S)}|H_1(L_p;\mathbb{Z})|\right)$ by Corollary \ref{cor:linking_form}. The result now follows immediately.    
\end{proof}

For relatively prime integers $p>q>0$, the lens space $L(p,q)$ is an oriented 3-manifold obtained by $-p/q$-surgery along the unknot in $S^3$.  The fraction $p/q$ can be uniquely expressed as a Hirzebruch-Jung continued fraction:
\[ 
    \frac{p}{q}=[a_1,\dots,a_\ell]:=a_1-\frac{1}{a_2-\displaystyle\frac{1}{\cdots-\displaystyle\frac{1}{a_\ell}}} \ ~~(a_i\geq 2).
\]
It is well known that $L(p,q)$ is the boundary of the negative definite plumbed 4-manifold $X(p,q)$, constructed from the linear graph shown in Figure \ref{fig:plumbing_graph_of_X(p,q)} (see \cite[Exercise 5.3.9(b)]{Gompf-Stipsicz-1999}). Since $L(p,p-q)$ is homeomorphic to the orientation reversal $-L(p,q)$ of $L(p,q)$, we conclude that $-L(p,q)$ is the boundary of the negative definite 4-manifold $X(p,p-q)$. 

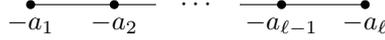
\begin{figure}[!t]
\centering
\begin{tikzpicture}[scale=1.1]
\draw (-2,0) node[circle, fill, inner sep=1.2pt, black]{};
\draw (-1,0) node[circle, fill, inner sep=1.2pt, black]{};
\draw (1,0) node[circle, fill, inner sep=1.2pt, black]{};
\draw (2,0) node[circle, fill, inner sep=1.2pt, black]{};

\draw (-2,0) node[below]{$-a_1$};
\draw (-1,0) node[below]{$-a_2$};
\draw (1,0) node[below]{$-a_{\ell-1}$};
\draw (2,0) node[below]{$-a_\ell$};

\draw (0,0) node{$\cdots$};

\draw (-2,0)--(-1,0) (-1,0)--(-0.5,0) (0.5,0)--(1,0)  (1,0)--(2,0) ;
\end{tikzpicture}
\caption{The plumbing graph for the negative definite 4-manifold $X(p,q)$ bounded by the lens space $L(p,q)$.}
\label{fig:plumbing_graph_of_X(p,q)}
\end{figure}

\begin{example}\label{ex:link_of_An_Kn}
    The link of a singularity of type $A_n$ is $L(n+1,n)$. Its orientation reversal $L(n+1,1)$ is the boundary of the 4-manifold $X(n+1,1)$, whose plumbing graph is shown in Figure \ref{fig:plumbing_graph_of_X(n+1,1)}.
    
\begin{figure}[!t]
\centering
\begin{tikzpicture}[scale=1]
    \draw (-8,0) node[circle, fill, inner sep=1.2pt, black]{};
    \draw (-8,0) node[below]{$-(n+1)$};
\end{tikzpicture}
\caption{The plumbing graph of $X(n+1,1)$.}
\label{fig:plumbing_graph_of_X(n+1,1)}
\end{figure}
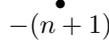

The link of a singularity of type $K_n$ is $L(4n,2n-1)$. Its orientation reversal $L(4n,2n+1)$ is the boundary of the 4-manifold $X(4n,2n+1)$, whose plumbing graph is shown in Figure \ref{fig:plumbing_graph_of_X(4n,2n+1)}.
    
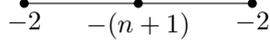
\begin{figure}[!th]
\centering
\begin{tikzpicture}[scale=1]
    \draw (-1.5,0) node[circle, fill, inner sep=1.2pt, black]{};
    \draw (0,0) node[circle, fill, inner sep=1.2pt, black]{};
    \draw (1.5,0) node[circle, fill, inner sep=1.2pt, black]{};
    
    \draw (-1.5,0) node[below]{$-2$};
    \draw (0,0) node[below]{$-(n+1)$};
    \draw (1.5,0) node[below]{$-2$};
    \draw (-1.5,0)--(1.5,0);
\end{tikzpicture}
\caption{The plumbing graph of $X(4n,2n+1)$.}
\label{fig:plumbing_graph_of_X(4n,2n+1)}
\end{figure}
\end{example}

\subsubsection{Heegaard Floer $d$-invariants}
A rational homology $3$-sphere $Y$ is defined as an \textit{L-space} if the rank of its hat version of Heegaard Floer homology, denoted by $\textrm{rank}(\widehat{HF}(Y))$, equals $|H_1(Y;\Bbb Z)|$. Lens spaces are known to be $L$-spaces, and the property of being an $L$-space is preserved under connected sums \cite{Ozsvath-Szabo-2005}. Moreover, it is known that the boundary of a negative definite 4-dimensional plumbing is an $L$-space if it is the link of a rational surface singularity \cite{Nemethi-2005}, \cite[p.358]{Lisca-Stipsicz-2007}.

A particularly relevant invariant in Heegaard Floer homology, which encapsulates information about $4$-dimensional manifolds bounded by $Y$, is the \textit{d-invariant}, also known as the \textit{correction term}. This is analogous to Fr\o yshov's invariant in Seiberg-Witten theory \cite{Froyshov-1996}. This rational-valued invariant is associated with a pair $(Y,\mathfrak{t})$, where $Y$ is a rational homology $3$-sphere, and $\mathfrak{t}$ is a spin$^c$ structure on $Y$. It is also known that $d$-invariant is additive under the connected sum of two spin$^c$ rational homology $3$-spheres. For further details, see \cite{Ozsvath-Szabo-2003}. 

In particular, for $d$-invariants of the boundary $3$-manifolds of a spin cobordism, especially those with small $b_2$, we have the following.

\begin{proposition}[{\cite[Lemma 2.7]{Lidman-Moore-Vazquez-2019}}]\label{prop:spin_cobordism} Let $(W,\mathfrak{s})\colon(Y,\mathfrak{t})\to (Y',\mathfrak{t}')$ be a spin cobordism between $L$-spaces satisfying $b_2^+(W)=1$ and $b_2^-(W)=0$. Then \[d(Y',\mathfrak{t}')-d(Y,\mathfrak{t})=-\frac{1}{4}.  \]    
\end{proposition}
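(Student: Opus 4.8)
The plan is to sandwich the quantity $d(Y',\mathfrak t')-d(Y,\mathfrak t)$ between matching upper and lower bounds, both coming from the behavior of Heegaard Floer $d$-invariants under spin$^c$ cobordisms. Two structural facts set this up. First, since $\mathfrak s$ is induced by a spin structure it is self-conjugate with $c_1(\mathfrak s)=0$, so $c_1(\mathfrak s)^2=0$; and $\sigma(W)=b_2^+(W)-b_2^-(W)=1$. Second, because $Y$ and $Y'$ are $L$-spaces, $HF^+(Y,\mathfrak t)$ and $HF^+(Y',\mathfrak t')$ are standard towers whose bottom gradings are exactly $d(Y,\mathfrak t)$ and $d(Y',\mathfrak t')$, so the effect of a cobordism map can be read off directly at the level of $d$-invariants.

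For the upper bound I would reverse orientation. The manifold $-W$, regarded as a spin$^c$ cobordism between the $L$-spaces $-Y$ and $-Y'$, satisfies $b_2^+(-W)=0$ and $b_2^-(-W)=1$, hence is negative definite. The Ozsv\'ath--Szab\'o $d$-invariant inequality for negative-definite cobordisms \cite{Ozsvath-Szabo-2003}, together with $c_1(\mathfrak s)^2=0$ and $b_2^-(-W)=1$, bounds the difference of the boundary $d$-invariants; feeding in the conjugation symmetry $d(-Y,\mathfrak t)=-d(Y,\mathfrak t)$ and $d(-Y',\mathfrak t')=-d(Y',\mathfrak t')$ (valid since spin structures are self-conjugate) then yields $d(Y',\mathfrak t')-d(Y,\mathfrak t)\le -\tfrac14$.

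The real content is the reverse inequality $d(Y',\mathfrak t')-d(Y,\mathfrak t)\ge -\tfrac14$, and this is where I expect the main difficulty. One cannot simply apply the negative-definite estimate to $W$ itself: since $b_2^+(W)=1$, the induced map on $HF^\infty$ vanishes, and the naive grading shift $\tfrac{c_1(\mathfrak s)^2-2\chi(W)-3\sigma(W)}{4}$ of the cobordism map no longer records the $d$-invariant. My plan is to exploit the hypotheses $b_2^-(W)=0$ and the $L$-space condition: after handle cancellation one may present $W$ as a single even-framed $2$-handle (surgery) cobordism, and then use the mapping-cone / surgery-exact-triangle description of $HF^+$ specialized to $L$-spaces, where the relevant reduced groups vanish, to pin the shift exactly at $-\tfrac14$; equivalently, one shows the cobordism map $F^+_{W,\mathfrak s}$ is nonzero on the bottom tower element, which forces the complementary bound. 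Combining the two bounds gives the equality. As a consistency check, the minimal model where $W$ is the $+2$-framed $2$-handle on the unknot, a spin cobordism from $S^3$ to $\mathbb{RP}^3$ with $b_2^+=1$ and $b_2^-=0$, realizes exactly $d(\mathbb{RP}^3,\mathfrak t')-d(S^3)=-\tfrac14$, confirming both the target value and the sharpness of the upper bound.
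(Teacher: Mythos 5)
First, a point of comparison: the paper does not prove this proposition at all --- it is quoted directly from \cite[Lemma 2.7]{Lidman-Moore-Vazquez-2019} --- so your argument has to be judged on its own terms. Your first half is correct and is the standard argument: viewing $-W$ as a negative definite cobordism (from $Y'$ to $Y$, or equivalently from $-Y$ to $-Y'$ together with $d(-Y,\mathfrak{t})=-d(Y,\mathfrak{t})$) and applying the Ozsv\'ath--Szab\'o negative-definite $d$-invariant inequality with $c_1(\mathfrak{s})^2=0$ and $b_2^-=1$ gives $d(Y',\mathfrak{t}')-d(Y,\mathfrak{t})\le -\tfrac{1}{4}$; as you note, this direction uses no $L$-space hypothesis. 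The genuine gap is in the lower bound, and the pivot you propose there is not merely unproven but impossible. You suggest showing that $F^+_{W,\mathfrak{s}}$ is nonzero on the bottom element of the tower. But since $b_2^+(W)=1>0$, the induced map $F^\infty_{W,\mathfrak{s}}$ vanishes (Ozsv\'ath--Szab\'o), and because $Y$ is an $L$-space the projection $HF^\infty(Y,\mathfrak{t})\to HF^+(Y,\mathfrak{t})$ is surjective (there is no reduced part); by naturality $F^+_{W,\mathfrak{s}}$ then vanishes identically on $HF^+(Y,\mathfrak{t})$. So no argument can establish the nonvanishing you want, and whatever mechanism does prove the lower bound (this is exactly where the $L$-space hypothesis must enter), it cannot be the untwisted cobordism map.

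The supporting reductions in your plan also do not hold up. ``After handle cancellation one may present $W$ as a single even-framed $2$-handle cobordism'' is unjustified: a cobordism between rational homology spheres with $b_2^+=1$, $b_2^-=0$ need not admit such a handle structure ($1$- and $3$-handles need not cancel, and neither $b_1(W)$ nor $H_1(W)$ is assumed to vanish). This generality is not a technicality here: in the application in this paper (Corollaries \ref{cor:spin_d} and \ref{cor:spin_d_2}), the relevant $4$-manifold is $Z^0$, which is in no natural way a surgery trace. And even granting a $2$-handle presentation, ``use the mapping cone specialized to $L$-spaces to pin the shift at $-\tfrac{1}{4}$'' compresses the real content of the lemma: one would need the rational surgery formula for rationally null-homologous knots in $L$-spaces, an identification of which spin$^c$ structure in the mapping cone is the spin one, and a genus-type bound (the analogue of ``positive $L$-space slopes are at least $2g-1$'') forcing the relevant correction term to vanish. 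Your check on the $+2$-framed unknot verifies the statement, not the proof. In short: the upper bound is complete; the lower bound is missing, and its proposed replacement rests on a false claim.
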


The following is an immediate corollary of the fact that $d(S^3,\mathfrak{t})=0$, where $\mathfrak{t}$ is the unique spin$^c$ structure on $S^3$.

\begin{corollary}\label{cor:spin_d} Let $(W,\mathfrak{s})$ be a compact, oriented, smooth, spin $4$-manifold whose boundary $Y$ is an $L$-space. If $b_2^+(W)=1$ and $b_2^-(W)=0$, then 
\[d(Y,\mathfrak{s}|_Y)=-\frac{1}{4}.\]  
\end{corollary}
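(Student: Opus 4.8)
The plan is to realize $(W,\mathfrak{s})$ as a spin cobordism between $L$-spaces so that Proposition \ref{prop:spin_cobordism} applies directly, using $S^3$ as an auxiliary incoming boundary component. Concretely, I would first remove an open $4$-ball from the interior of $W$ to form $W' := W \setminus \operatorname{int}(B^4)$. The newly created boundary component is a copy of $S^3$ (up to orientation; since $S^3$ admits an orientation-reversing self-diffeomorphism this causes no trouble), so $W'$ becomes a cobordism $W'\colon S^3 \to Y$.

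Next I would verify that puncturing preserves every hypothesis needed to invoke the proposition. Removing an open ball from the interior of a $4$-manifold does not affect $H_2$ or the intersection form, so $Q_{W'}\cong Q_W$ and hence $b_2^+(W')=1$ and $b_2^-(W')=0$ are inherited. The spin structure $\mathfrak{s}$ restricts to a spin structure $\mathfrak{s}|_{W'}$ on $W'$, and on the inserted $S^3$ boundary it must be the unique spin (indeed unique spin$^c$) structure $\mathfrak{t}_0$. Finally, both ends are $L$-spaces: $S^3$ is a lens space and hence an $L$-space, while $Y$ is an $L$-space by hypothesis.

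With these checks in place, applying Proposition \ref{prop:spin_cobordism} to the spin cobordism $(W',\mathfrak{s}|_{W'})\colon (S^3,\mathfrak{t}_0)\to (Y,\mathfrak{s}|_Y)$ yields
\[ d(Y,\mathfrak{s}|_Y)-d(S^3,\mathfrak{t}_0)=-\tfrac{1}{4}, \]
and since $d(S^3,\mathfrak{t}_0)=0$ the asserted identity $d(Y,\mathfrak{s}|_Y)=-\tfrac{1}{4}$ follows at once.

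There is essentially no hard step here, as the statement is an immediate consequence of Proposition \ref{prop:spin_cobordism} together with $d(S^3)=0$. The only point demanding any care is the bookkeeping under puncturing: confirming that $\mathfrak{s}$ and the Betti number data $b_2^+=1$, $b_2^-=0$ descend to $W'$, and that the new $S^3$ end carries the standard spin structure with vanishing $d$-invariant. I expect this routine verification to be the most delicate part of the argument.
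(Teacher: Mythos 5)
Your proposal is correct and follows exactly the route the paper intends: the paper presents this corollary as an immediate consequence of Proposition \ref{prop:spin_cobordism} together with $d(S^3,\mathfrak{t})=0$, which amounts precisely to puncturing $W$ to view it as a spin cobordism $(S^3,\mathfrak{t}_0)\to(Y,\mathfrak{s}|_Y)$. Your additional bookkeeping (invariance of the intersection form under puncturing, uniqueness of the spin structure on $S^3$, and $S^3$ being an $L$-space) is exactly the routine verification the paper leaves implicit.
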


Applying this to a $\mathbb{Q}$-homology $\mathbb{CP}^2$, we obtain:
\begin{corollary}\label{cor:spin_d_2} Let $S$ be a $\mathbb{Q}$-homology $\mathbb{CP}^2$ such that $H_1(S^0;\mathbb{Z})=0$. If $\prod_{p\in \textup{Sing}(S)}|H_1(L_p;\mathbb{Z})|$ is even, then for each $p\in \textup{Sing}(S)$ there exists a spin structure $\mathfrak{s}_p$ on the link $L_p$ of $S$ at $p$ such that \[
\sum_{p\in \textup{Sing}(S)} d(L_p,\mathfrak{s}_p)=\frac{1}{4}.
\]
\end{corollary}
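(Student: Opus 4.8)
The plan is to apply Corollary \ref{cor:spin_d} to the auxiliary manifold $Z^0$ associated with $S$, whose boundary is the single rational homology sphere $Y:=\#_{p\in\textup{Sing}(S)}(-L_p)$. Recall from the construction in Section \ref{subsec:top_and_smooth_obstructions} together with Corollary \ref{cor:linking_form} that $Z^0$ is a compact, oriented, smooth $4$-manifold with $H_1(Z^0;\mathbb{Z})\cong H_1(S^0;\mathbb{Z})=0$, with $b_2^+(Z^0)=b_2(Z^0)=1$ and $b_2^-(Z^0)=0$, and with intersection form $Q_{Z^0}=(n)$, where $n=\prod_{p\in\textup{Sing}(S)}|H_1(L_p;\mathbb{Z})|$. (When $|\textup{Sing}(S)|\leq 1$ one works directly with $W^0$, whose boundary $-L_p$ is already connected.)

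First I would produce a spin structure on $Z^0$. Since $Z^0$ is oriented, $w_1(Z^0)=0$, and because $H_1(Z^0;\mathbb{Z})=0$ the reduction $H^2(Z^0;\mathbb{Z})\to H^2(Z^0;\mathbb{Z}_2)$ identifies $w_2(Z^0)$ with the mod-$2$ functional $x\mapsto x\cdot x$ on $H_2(Z^0;\mathbb{Z})$. The hypothesis that $n$ is even says precisely that the generator of $Q_{Z^0}$ has even square, so the intersection form is even and $w_2(Z^0)=0$; hence $Z^0$ admits a spin structure $\mathfrak{s}$. Next I would verify that $Y$ is an $L$-space: each link $L_p$ is the link of a quotient, hence rational, surface singularity and therefore bounds its negative definite minimal resolution, so by the results of N\'emethi and Lisca--Stipsicz cited above $L_p$ is an $L$-space; since the $L$-space property is invariant under orientation reversal (equal ranks of $\widehat{HF}$ for $L_p$ and $-L_p$) and under connected sum, $Y$ is an $L$-space.

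With $(Z^0,\mathfrak{s})$ satisfying all the hypotheses of Corollary \ref{cor:spin_d}, I conclude $d(Y,\mathfrak{s}|_Y)=-\tfrac14$. It then remains to unwind this over the connected-sum decomposition. The restriction $\mathfrak{s}|_Y$ decomposes as a connected sum of spin structures on the summands $-L_p$; let $\mathfrak{s}_p$ denote the corresponding spin structure on $L_p$. Using additivity of the $d$-invariant under connected sum together with $d(-L_p,\cdot)=-d(L_p,\cdot)$, I obtain
\[
-\frac14=d(Y,\mathfrak{s}|_Y)=\sum_{p\in\textup{Sing}(S)}d(-L_p,\mathfrak{s}|_{-L_p})=-\sum_{p\in\textup{Sing}(S)}d(L_p,\mathfrak{s}_p),
\]
which yields the claimed identity $\sum_{p}d(L_p,\mathfrak{s}_p)=\tfrac14$.

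The main obstacle is the spin-structure bookkeeping rather than any deep input. One must argue cleanly that the even intersection form together with $H_1(Z^0;\mathbb{Z})=0$ forces $w_2(Z^0)=0$, so that $Z^0$ itself (and not merely the closed definite manifold produced in Corollary \ref{cor:Donaldson}) is spin, and that the induced structure on $\partial Z^0$ is an honest spin---not merely spin$^{c}$---structure restricting to spin structures $\mathfrak{s}_p$ on each $L_p$. Once these points are in place, the conclusion is a direct application of the already-established formalism of Corollary \ref{cor:spin_d} and the additivity and orientation-reversal properties of the $d$-invariant.
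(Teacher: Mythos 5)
Your proposal is correct and takes essentially the same route as the paper: both apply Corollary \ref{cor:spin_d} to $Z^0$, deduce that $Z^0$ is spin from $H_1(Z^0;\mathbb{Z})=0$ together with the evenness of $Q_{Z^0}=(n)$ (the paper simply cites \cite[Corollary 5.7.6]{Gompf-Stipsicz-1999} where you reprove this via $w_2$), and then unwind the resulting identity $d(\partial Z^0,\mathfrak{s})=-\tfrac14$ over the connected-sum decomposition using additivity and $d(-L_p,\cdot)=-d(L_p,\cdot)$. Your explicit verification of the $L$-space hypothesis and your remark on the $|\textup{Sing}(S)|\leq 1$ case are points the paper leaves implicit, but the argument is the same.
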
 
\begin{proof} Consider the smooth 4-manifold $Z^0$ constructed in Section \ref{subsec:top_and_smooth_obstructions}, whose boundary is the connected sum $\#_{p\in \textup{Sing}(S)}\left(-L_p\right)$. Since $H_1(S^0;\mathbb{Z})=0$, we have $H_1(Z^0;\mathbb{Z})=0$, and the intersection form $Q_{Z^0}$ of $Z^0$ is represented by $\left(\prod_{p\in \textup{Sing}(S)}|H_1(L_p;\mathbb{Z})|\right)$ by Corollary \ref{cor:linking_form}. 

Since $\prod_{p\in \textup{Sing}(S)}|H_1(L_p;\mathbb{Z})|$ is even by assumption, it follows that $Z^0$ is a spin 4-manifold \cite[Corollary 5.7.6]{Gompf-Stipsicz-1999}. Thus, by Corollary \ref{cor:spin_d}, we have \[
d(\partial Z^0,\mathfrak{s})=-\frac{1}{4}\] 
for some spin structure $\mathfrak{s}$ of $\partial Z^0$. Note that every spin structure on $\partial Z^0=\#_{p\in \textup{Sing}(S)}(-L_p)$ is induced from a spin structure on each $-L_p$. Therefore, we conclude that there exists a spin structure $\mathfrak{s}_p$ on $L_p$ for each $p\in \textup{Sing}(S)$ such that \[
-\frac{1}{4}=d\biggl( \underset{p\in \textup{Sing}(S)}{\#}-L_p, \underset{p\in \textup{Sing}(S)}{\#}\mathfrak{s}_p\biggr)=\sum_{p\in\textup{Sing}(S)} d(-L_p,\mathfrak{s}_p)=\sum_{p\in \textup{Sing}(S)} -d(L_p,\mathfrak{s}_p).
\]
\end{proof}

Recall that our orientation of the lens space $L(p,q)$ is defined such that it is obtained by $-p/q$-surgery along the unknot in $S^3$. Following \cite[Proposition 4.8]{Ozsvath-Szabo-2003} (noting that their orientation convention differs from ours), we can choose an identification $\text{Spin}^c(L(p,q))\cong \Bbb Z_p$ such that the following recursive formula for the $d$-invariants applies: 
\begin{equation}\label{eq:d-invariant}
    d(L(p,q),i)=\frac{1}{4}-\frac{(2i+1-p-q)^2}{4pq}-d(L(q,r),j),
\end{equation}
assuming $0<q<p$ and $0\leq i<p$, where $r$ and $j$ are the reductions of $p$ and $i$ modulo $q$, respectively. The spin structures of $L(p,q)$ correspond exactly to the integers among $(q-1)/2$ and $(p+q-1)/2$ \cite[p.134]{Ue-2009}.

\begin{example}[Spin $d$-invariants of the link of a singularity of type $A_n$ and $K_n$]\label{ex:d_invs_An_Kn} Recall from Example \ref{ex:link_of_An_Kn} that the link of a singularity of type $A_n$ is $L(n+1,n)$. For odd $n$, $L(n+1,n)$ has two spin structures $(n-1)/2$ and $n$. The recursive formula (\ref{eq:d-invariant}) gives \[
d\left(L(n+1,n),\frac{n-1}{2}\right)=-\frac{1}{4}\quad \text{and}\quad d\left(L(n+1,n),n\right)=\frac{n}{4}.
\]
For even $n$, $L(n+1,n)$ has a unique spin structure $n$, and we have \[
d(L(n+1,n),n)=\frac{n}{4}.
\] 

The link of a singularity of type $K_n$ is $L(4n, 2n-1)$, which has two spin structures, $n-1$ and $3n-1$. We have 
\[
d(L(4n,2n-1),n-1)=\begin{cases}
    -3/4 & \text{for odd $n$}, \\ 
    -1/4 & \textrm{for even $n$},
\end{cases}\]
and 
\[ 
\quad d(L(4n,2n-1),3n-1)=\begin{cases}
    1/4 & \text{for odd $n$}, \\ 
    -1/4 & \text{for even $n$}.
\end{cases}
\]

 \end{example}

\begin{example}[$d$-invariants of the link of a singularity of type $D_n$]\label{ex:d_invs_Dn} Consider the link $L(D_n)$ of a singularity of type $D_n$. We have (see \cite[Satz 2.11]{Brieskorn-1968}): \[
H_1(L(D_n);\mathbb{Z})=\begin{cases}
    \mathbb{Z}_2\oplus \mathbb{Z}_2 & \textrm{if $n$ is even}, \\
    \mathbb{Z}_4 & \textrm{if $n$ is odd}.
\end{cases}
\]
In particular, $L(D_n)$ has four distinct spin$^c$ structures for each $n$. The corresponding $d$-invariants are given by $\displaystyle\frac{n}{4}$, $0$, $0$, and $\displaystyle\frac{n-4}{4}$ \cite[Example 15]{Doig-2015}. Notably, the values $\displaystyle\frac{n}{4}$ and $\displaystyle\frac{n-4}{4}$ correspond to spin structures \cite[Table 3]{Doig-2015}.
\end{example}

To determine $d$-invariants for the link of a singularity of type $E_n$ for $n=6,7,8$, we recall the work of Ni and Wu on Heegaard Floer homology of 3-manifolds obtained by surgery along a knot. Let $p,q>0$ be relatively prime integers and $i\in \{0,\dots,p-1\}$. Given a knot $K\subset S^3$, let $S^3_{p/q}(K)$ denote the 3-manifold obtained by $p/q$-surgery along $K$. A non-decreasing sequence $\{V_s(K)\}_{s=0}^\infty$ of non-negative integers can be assigned to $K$ \cite[Section 2]{Ni-Wu-2015}. It is known that \[
V_s(K)=\begin{cases}
    1 & \textrm{if $s=0$} \\
    0 & \textrm{if $s>0$}
\end{cases}
\]
when $K$ is the right-handed trefoil knot  \cite{Ozsvath-Szabo-2005-2}, \cite[p.238]{Choe-Park-2021}.
\begin{proposition}[{\cite[Proposition 1.6, Remark 2.10]{Ni-Wu-2015}}]\label{prop:Ni-Wu} The following formula holds: \[
d(S^3_{p/q}(K),i)=d(-L(p,q),i)-2\max\left\{V_{\lfloor \frac{i}{q}\rfloor}(K), V_{\lfloor\frac{p+q+1-i}{q}\rfloor}(K)\right\},
\]
where $x\mapsto \lfloor x\rfloor$ is the usual floor function.     
\end{proposition}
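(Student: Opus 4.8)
The plan is to deduce this from the Ozsv\'ath--Szab\'o rational surgery (mapping cone) formula, which expresses $HF^+$ of $p/q$-surgery as the homology of an explicit mapping cone, and then to read off the bottom of the $U$-tower in each spin$^c$ structure. Concretely, for $p/q>0$ the group $HF^+(S^3_{p/q}(K),i)$ is isomorphic to the homology of a mapping cone $\mathbb{X}^+$ of a map
\[
    D^+\colon \bigoplus_{s} A_s^+ \longrightarrow \bigoplus_{s} B_s^+,
\]
where each $B_s^+\cong HF^+(S^3)=\mathcal{T}^+$, each $A_s^+$ is the large-surgery complex associated to the $s$-th truncation of the knot Floer complex of $K$, and the component maps are $v_s\colon A_s^+\to B_s^+$ and $h_s\colon A_s^+\to B_{s+1}^+$. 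The spin$^c$ structure $i$ selects the subcomplex consisting of those $s$ lying in a fixed residue class modulo $p$ under the standard affine identification, so the relevant $d$-invariant is the minimal grading carried by the image of the tower in this truncated zig-zag.

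The first technical ingredient I would record is the behaviour of the maps on the towers: on the $U$-nontorsion part, $v_s$ acts as multiplication by $U^{V_s}$ and $h_s$ as multiplication by $U^{H_s}$, where $\{V_s\}$ and $\{H_s\}$ are the non-negative integers attached to $K$, satisfying the symmetry $H_s=V_{-s}$ and the monotonicity $V_s\geq V_{s+1}\geq V_s-1$ (and dually for $H_s$). These facts, together with the grading shifts built into the surgery formula, are exactly what is needed to compute the degree of the generator of the surviving tower in $\mathbb{X}^+$.

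The heart of the argument is then a comparison with the unknot. Setting $K=U$ gives $V_s=H_s=0$ for all $s$, so all the $v_s,h_s$ are isomorphisms of $\mathcal{T}^+$ and the mapping cone computes $HF^+(-L(p,q))$; this pins down the grading normalization and shows that the ``base'' contribution to the $d$-invariant is precisely $d(-L(p,q),i)$. For general $K$, the only change in the zig-zag is that two of the maps out of the extremal relevant $A_s^+$ summands become multiplication by positive powers of $U$; a diagram chase shows the bottom of the tower is pushed up by twice the larger of these two exponents. Translating the extremal indices into closed form via the affine identification of spin$^c$ structures yields the floor-function indices $\lfloor i/q\rfloor$ and $\lfloor (p+q+1-i)/q\rfloor$, and using $H_s=V_{-s}$ rewrites both exponents as values of $V$, producing the stated correction $-2\max\{V_{\lfloor i/q\rfloor},V_{\lfloor (p+q+1-i)/q\rfloor}\}$.

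The step I expect to be most delicate is the combinatorial bookkeeping in this last paragraph: correctly truncating the mapping cone for a fixed spin$^c$ structure, tracking the absolute gradings through the zig-zag (including the lens-space grading shifts inherited from the unknot comparison), and verifying that among all the relevant $V_s$ and $H_s$ only the two extremal ones survive to determine the bottom of the tower. The monotonicity of $\{V_s\}$ and $\{H_s\}$ is what ultimately collapses the entire zig-zag to this single maximum, and matching the two surviving indices to the precise floor expressions is the part that requires the most care.
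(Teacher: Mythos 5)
The paper offers no proof of this proposition to compare against: it is quoted (with conventions adjusted per Remark 2.10) directly from Ni--Wu, so the ``paper's own proof'' is a citation. Your sketch is essentially the argument of that cited source --- the Ozsv\'ath--Szab\'o rational surgery mapping cone, the action of $v_s$ and $h_s$ on the towers as $U^{V_s}$ and $U^{H_s}$, the unknot comparison pinning the grading normalization to $d(-L(p,q),i)$, and the monotonicity of $\{V_s\}$, $\{H_s\}$ together with $H_s=V_{-s}$ reducing everything to the two extremal indices --- so it matches the original approach. One caveat: your claim that ``the only change in the zig-zag is that two of the maps out of the extremal $A_s^+$ summands become multiplication by positive powers of $U$'' is not literally correct, since in general many of the $v_s,h_s$ act by nontrivial powers of $U$; the correct statement, which your final paragraph implicitly supplies, is that monotonicity of the sequences guarantees that only the two extremal exponents $V_{\lfloor i/q\rfloor}$ and $H_{\lfloor (i-p)/q\rfloor}$ can obstruct the bottom of the surviving tower, and this is precisely the bookkeeping Ni--Wu carry out.
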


\begin{example}[$d$-invariants of the link of a singularity of type $E_n$]\label{ex:d_invs_En}

For $n=6,7,8$, it can be shown using elementary Kirby calculus that the link $L(E_n)$ of a singularity of type $E_n$ can be obtained from $S^3$ by $(n-9)$-surgery along the left-handed trefoil knot (see \cite[Exercise 5.1.12(a)]{Gompf-Stipsicz-1999}). (Note that the same argument shows that the link $L(D_5)$ of a singularity of type $D_5$ can be obtained from $S^3$ by $(-4)$-surgery along the left-handed trefoil knot.) In particular, $H^2(L(E_n);\mathbb{Z})=H_1(L(E_n);\mathbb{Z})\cong \mathbb{Z}_{9-n}$ $(n=6,7,8)$ by Lemma \ref{lem:linking_form}(2), and the $d$-invariants of $L(E_n)$ can be computed using Proposition \ref{prop:Ni-Wu}. The results are summarized in Table \ref{tab:d_invs}. We also note that $L(E_8)$ is the Poincar\'e homology sphere $\Sigma(2,3,5).$
\end{example}

\begin{table}[t]
    \centering
    \begin{tabular} {c|c}
Link  &  $d$-invariants \\ [-1em] \\ 
      \hline \\ [-0.8em] 
       $L(E_6)$ & $\dfrac{3}{2}$, $\dfrac{1}{6}$, $\dfrac{1}{6}$\\ [-0.5em]  \\
       $L(E_7)$ & $\dfrac{7}{4}$, $\dfrac{1}{4}$ \\ [-0.6em]\\
       $L(E_8)$ & $2$ 
       \end{tabular}
    \caption{$d$-invariants of the links of singularities of type $E_n$.}
    \label{tab:d_invs}
\end{table}

\section{The Index One Case (Proof of Theorem \ref{thm:Gorenstein_H1=0})}
Let $S$ be a Gorenstein $\mathbb{Q}$-homology $\mathbb{CP}^2$ such that $H_1(S^0;\mathbb{Z})=0$. By Lemma \ref{lem:canonical_divisor}(1), $K_S$ is not numerically trivial, so the singularity type of $S$ must be one of the $27$ types listed in Theorem \ref{thm:Gorenstein_list}(1) for the case $K\not\equiv 0$. Moreover, by Lemma \ref{lem:canonical_divisor}(1), the numbers $|\det(R_p)|$ are pairwise relatively prime (see Table \ref{tab:|det|}). It follows that the singularity type of $S$ must be one of the following $10$ types: \[
    A_8, ~E_8,~ D_8, ~A_7, ~E_7,~ E_6,~ D_5, ~A_4,~ A_2A_1, ~A_1.
\]

\begin{lemma} The type $D_8$ does not occur as the singularity type of $S$.
\end{lemma}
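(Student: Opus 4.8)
The plan is to derive a contradiction from the smooth obstruction encoded in Corollary \ref{cor:orthogonal_complement}, applied to a hypothetical Gorenstein \qhcp{} $S$ of singularity type $D_8$. Since $D_8$ is the only singularity, we have $\prod_p |H_1(L_p;\mathbb{Z})| = |\det(R_{D_8})| = 4$ by Table \ref{tab:|det|}. The link $-L(D_8)$ bounds a canonical compact, oriented, negative definite, smooth $4$-manifold $X_{D_8}$ with $H_1 = 0$, namely the plumbing $4$-manifold associated to the $D_8$ Dynkin diagram with all weights $-2$ (equivalently, the minimal resolution neighborhood with reversed orientation). Its intersection lattice $Q_{X_{D_8}}$ is the $E$-type root lattice $-D_8$ of rank $8$ and determinant $4$.

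First I would set up the embedding problem precisely. By Corollary \ref{cor:orthogonal_complement}, the existence of $S$ forces an embedding of lattices $\iota\colon -D_8 \hookrightarrow -\mathbb{Z}^9$, where $9 = b_2(X_{D_8}) + 1 = 8+1$, with the additional constraint that the orthogonal complement of $\iota(-D_8)$ in $-\mathbb{Z}^9$ is generated by a single vector $v$ of square $\langle v, v\rangle = -4$. So I reduce the problem to pure lattice combinatorics: does the negative-definite root lattice $D_8$ embed into $\mathbb{Z}^9$ in such a way that its orthogonal complement is spanned by one vector of norm $4$? Working with positive-definite conventions (multiplying through by $-1$), this asks whether $D_8 \hookrightarrow \mathbb{Z}^9$ with orthogonal complement $\langle v \rangle$, $v \cdot v = 4$.

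The key step is to analyze the possible images of the eight generating root vectors of $D_8$ under such an embedding. Each root $r_i$ has $r_i \cdot r_i = 2$, so in $\mathbb{Z}^9$ each must be $\pm e_a \pm e_b$ for standard basis vectors; the intersection pattern of the $D_8$ diagram (a chain of six with a fork at one end) then constrains which coordinates can be shared between adjacent and non-adjacent roots. I would argue, up to the obvious symmetries (permutation of coordinates and sign changes), that any such embedding of $D_8$ into $\mathbb{Z}^9$ must essentially use the standard realization inside a coordinate subspace $\mathbb{Z}^8 \subset \mathbb{Z}^9$, namely $r_i = e_i - e_{i+1}$ for the chain together with the fork root $e_7 + e_8$. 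One then computes the orthogonal complement of this copy of $D_8$ in $\mathbb{Z}^9$ and checks the norm of its primitive generator. The expected outcome is that the complement generator has square $1$ (coming from the leftover basis vector $e_9$, together with whatever combination of the $\mathbb{Z}^8$ coordinates is forced) rather than the required square $4$, yielding the desired contradiction.

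The main obstacle will be handling the case analysis of embeddings rigorously rather than just exhibiting the standard one: I must rule out all \emph{nonstandard} ways that $D_8$ could sit inside $\mathbb{Z}^9$ using the extra ninth coordinate, and show that in every case the orthogonal complement fails to be cyclic of the right norm. This requires a careful, but finite, combinatorial argument tracking coordinate overlaps and sign choices across the Dynkin diagram; Lisca's techniques (cited in the excerpt) for classifying lattice embeddings of linear plumbings provide the template, and the small rank makes the enumeration manageable. An alternative, possibly cleaner, route I would keep in reserve is to invoke the $d$-invariant obstruction of Corollary \ref{cor:spin_d_2}: since $4$ is even, a realization would force the existence of a spin structure $\mathfrak{s}$ on $L(D_8)$ with $d(L(D_8),\mathfrak{s}) = 1/4$, whereas by Example \ref{ex:d_invs_Dn} the spin $d$-invariants of $L(D_8)$ are exactly $\tfrac{8}{4} = 2$ and $\tfrac{8-4}{4} = 1$, neither of which equals $1/4$, giving an immediate contradiction.
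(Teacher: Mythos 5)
Your primary argument breaks at its very first step because of an orientation error. Corollary~\ref{cor:orthogonal_complement} requires, for each singularity, a compact, oriented, \emph{negative definite} smooth filling of $-L_p$, i.e.\ of the link with \emph{reversed} orientation. The plumbing on the $D_8$ Dynkin diagram with all weights $-2$ (the minimal resolution neighborhood) is indeed negative definite with $H_1=0$, but its boundary is $L(D_8)$ with its \emph{natural} orientation; reversing the orientation of that plumbing gives a manifold whose boundary is $-L(D_8)$ but whose intersection form is \emph{positive} definite. Your parenthetical ``equivalently, the minimal resolution neighborhood with reversed orientation'' conflates these two distinct objects, and so the hypothesis of Corollary~\ref{cor:orthogonal_complement} is never verified. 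This is precisely why the paper runs its embedding arguments only for lens-space links (types $A_n$, $K_n$, and the index-three cyclic types), where one has the special identity $-L(p,q)=L(p,p-q)$ and hence an explicit negative definite linear plumbing $X(p,p-q)$ filling the reversed link. No such filling of $-L(D_8)$ is exhibited in your proposal (and producing one, or proving none exists, is a nontrivial question), so the reduction to the lattice problem $D_8\hookrightarrow \mathbb{Z}^9$ with complement generator of square $4$ is unfounded, and the ensuing case analysis is moot.

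Your reserve argument, by contrast, is essentially correct: since $\prod_p|H_1(L_p;\mathbb{Z})|=4$ is even, Corollary~\ref{cor:spin_d_2} would force a spin structure $\mathfrak{s}$ on $L(D_8)$ with $d(L(D_8),\mathfrak{s})=1/4$, whereas by Example~\ref{ex:d_invs_Dn} every spin$^c$ structure on $L(D_8)$ has $d$-invariant in $\{2,0,0,1\}$, none of which equals $1/4$. One small imprecision: the paper asserts only that the values $2$ and $1$ are realized by spin structures, not that they are the \emph{only} spin values (in fact, since $H^1(L(D_8);\mathbb{Z})=0$, all four spin$^c$ structures are induced by spin structures); this does not affect your conclusion, since $0\neq 1/4$ as well. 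This route is the same technique the paper uses elsewhere (for $K_1E_8$, $A_1(2)E_8$, $A_3(2,2)E_8$, and $D_9(2)$), but it is not the paper's proof for $D_8$, and both of your routes overlook the immediate obstruction the paper actually uses: by Corollary~\ref{cor:linking_form}, the assumption $H_1(S^0;\mathbb{Z})=0$ forces $H_1(L_p;\mathbb{Z})$ to be \emph{cyclic} for every singularity, while $H_1(L(D_8);\mathbb{Z})\cong \mathbb{Z}_2\oplus\mathbb{Z}_2$ is not cyclic. That one-line topological argument disposes of $D_8$ without any gauge-theoretic or Floer-theoretic input.
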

\begin{proof} Recall that $H_1(L(D_8);\mathbb{Z})=\mathbb{Z}_2\oplus \mathbb{Z}_2$ (Example \ref{ex:d_invs_Dn}), which is not cyclic. Therefore the type $D_8$ does not occur by Corollary \ref{cor:linking_form}.
\end{proof}

\begin{lemma} The types $A_8$ and $A_7$ do not occur as the singularity type of $S$.
\end{lemma}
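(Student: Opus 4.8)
The two remaining candidate types $A_8$ and $A_7$ are the only ones on the list of ten with a single cyclic singularity of even-ish determinant that I have not yet eliminated. The plan is to use the Heegaard Floer $d$-invariant obstruction, since the relevant quantity $\prod_p |H_1(L_p;\mathbb{Z})|$ turns out to be even in both cases, so Corollary \ref{cor:spin_d_2} applies directly. Recall that for type $A_n$ the only singularity has link $L(n+1,n)$ with $|H_1|=n+1$. For $A_8$ the product is $9$, which is odd, so I would need to be careful — let me instead lean on the Donaldson embedding obstruction (Corollary \ref{cor:orthogonal_complement}) for $A_8$, and use the $d$-invariant obstruction (Corollary \ref{cor:spin_d_2}) for $A_7$, where $n+1=8$ is even.

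\textbf{The type $A_7$.} For $A_7$ the product $\prod_p|H_1(L_p;\mathbb{Z})| = 8$ is even, so by Corollary \ref{cor:spin_d_2} there must exist a spin structure $\mathfrak{s}$ on the single link $L(8,7)$ with $d(L(8,7),\mathfrak{s}) = \frac{1}{4}$. First I would read off from Example \ref{ex:d_invs_An_Kn} the spin $d$-invariants of $L(n+1,n)$ for $n=7$. Since $n=7$ is odd, $L(8,7)$ has two spin structures, with values $d = -\frac{1}{4}$ and $d = \frac{n}{4} = \frac{7}{4}$. Neither of these equals $\frac{1}{4}$, which contradicts the requirement of Corollary \ref{cor:spin_d_2}. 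Hence $A_7$ cannot occur. This is the clean case and I expect it to take only a line or two once the values from Example \ref{ex:d_invs_An_Kn} are invoked.

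\textbf{The type $A_8$.} Here the single singularity has link $L(9,8)$, whose orientation reversal $-L(9,8) = L(9,1)$ bounds the negative definite plumbing $X(9,1)$, a single vertex of weight $-9$ (Example \ref{ex:link_of_An_Kn}). Thus $Q_{X_p} = (-9)$ is a rank-one lattice with $H_1(X_p;\mathbb{Z})=0$. Applying Corollary \ref{cor:orthogonal_complement} with $n = b_2(X_p) = 1$, I would obtain an embedding of the lattice $(-9)$ into $-\mathbb{Z}^2$ whose orthogonal complement is generated by a vector of square $-\prod_p|H_1(L_p;\mathbb{Z})| = -9$. Writing the generator of $(-9)$ as $a e_1 + b e_2$ with $a^2+b^2 = 9$, and the complement generator as $c e_1 + d e_2$ with $c^2 + d^2 = 9$ and $ac+bd = 0$, I would check that $9$ cannot be written as a sum of two squares in a way that admits an orthogonal companion also summing to $9$: the only representation is $9 = 9+0$ (since $9$ is not a sum of two positive squares), forcing $\{a,b\}=\{3,0\}$ and likewise $\{c,d\}=\{3,0\}$, but then orthogonality $ac+bd=0$ fails. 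This contradiction rules out $A_8$.

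\textbf{Main obstacle.} The genuinely delicate point is bookkeeping the orientation and spin$^c$-labelling conventions so that the $d$-invariant values from Example \ref{ex:d_invs_An_Kn} are applied with the correct sign in the $A_7$ argument; a sign error there would spuriously produce a value of $\frac{1}{4}$ and invalidate the obstruction. For $A_8$ the only subtlety is the elementary number-theoretic claim that $9$ has no representation as a sum of two positive squares, which is immediate but must be stated so that the orthogonal-complement constraint in Corollary \ref{cor:orthogonal_complement} bites. I expect both arguments to be short; the conceptual work is entirely in selecting which of the two obstructions (parity-driven $d$-invariant versus Donaldson embedding) to deploy for each type.
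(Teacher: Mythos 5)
Your $A_8$ argument contains a genuine error in its final step. With $(a,b)=(3,0)$, the constraints you impose on the complement generator --- $c^2+d^2=9$ and $ac+bd=0$ --- \emph{are} satisfiable: take $(c,d)=(0,3)$. The vector $3e_2$ is orthogonal to $3e_1$ and has square $-9$, so the contradiction you claim (``orthogonality fails'') does not exist, and as written your argument proves nothing. The actual contradiction lies elsewhere: Corollary \ref{cor:orthogonal_complement} requires that the \emph{generator of the orthogonal complement} of $\iota(Q_{X(9,1)})=\langle 3e_1\rangle$ in $-\mathbb{Z}^2$ have square $-9$. That orthogonal complement is the full sublattice $\{w\in -\mathbb{Z}^2 : \langle w,3e_1\rangle =0\}=\mathbb{Z}e_2$, whose generator is $e_2$, of square $-1\neq -9$; the vector $3e_2$ lies in the complement but does not generate it. This is exactly how the paper argues (Figure \ref{fig:embedding_A8}). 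Your setup conflates ``there exists a vector of square $-9$ orthogonal to the image'' with ``the generator of the orthogonal complement has square $-9$''; the first statement is true and useless, the second is the one that fails. The fix is immediate, but the step as stated is wrong.

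Your $A_7$ argument, by contrast, is correct and takes a genuinely different route from the paper. The paper handles $A_7$ with the same lattice-embedding obstruction as $A_8$: the unique embedding of $Q_{X(8,1)}$ into $-\mathbb{Z}^2$ sends the generator to $2e_1+2e_2$, and the complement generator $e_1-e_2$ has square $-2\neq -8$. You instead invoke Corollary \ref{cor:spin_d_2}: since $|H_1(L(8,7);\mathbb{Z})|=8$ is even, some spin structure on $L(8,7)$ would need $d$-invariant $\frac{1}{4}$, but by Example \ref{ex:d_invs_An_Kn} the two spin $d$-invariants are $-\frac{1}{4}$ and $\frac{7}{4}$. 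This is a valid application (the corollary does not require more than one singularity; the paper itself uses it for the single-singularity type $D_9(2)$), and your orientation bookkeeping is consistent with the paper's conventions. What the paper's choice buys is uniformity --- one tool dispatches both types --- while your choice shows the Floer-theoretic spin obstruction alone suffices for $A_7$; neither is stronger here, since both obstructions apply to that case.
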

\begin{proof} We apply Corollary \ref{cor:orthogonal_complement} in both cases.

(1) Type $A_8$: The link of a singularity of type $A_8$ is the lens space $L(9,8)$ (Example \ref{ex:link_of_An_Kn}), and its orientation reversal $-L(9,8)=L(9,1)$ bounds the negative definite 4-manifold $X(9,1)$. Up to an automorphism of $-\mathbb{Z}^2$, it is evident that there is a unique embedding $Q_{X(9,1)} \hookrightarrow -\mathbb{Z}^2$, as indicated in Figure \ref{fig:embedding_A8}. The orthogonal complement is generated by $e_2$, which has square $-1\neq -9$. Hence, the condition given by Corollary \ref{cor:orthogonal_complement} is not satisfied.

 \begin{figure}[!th]
    \centering
\begin{tikzpicture}[scale=1]
\draw (0,-1.5) node[circle, fill, inner sep=1.2pt, black]{};
\draw (0,-1.5) node[above]{$-9$};
\draw (0,-1.5) node[below]{$3e_1$};
\end{tikzpicture}
\caption{An embedding of $Q_{X(9,1)}$ into $-\mathbb{Z}^2$.}
\label{fig:embedding_A8}
\end{figure}

(2) Type $A_7$: The link of a singularity of type $A_7$ is the lens space $L(8,7)$, and its orientation reversal $-L(8,7)=L(8,1)$ bounds the negative definite 4-manifold $X(8,1)$. Up to an automorphism of $-\mathbb{Z}^2$, there is a unique embedding $Q_{X(8,1)} \hookrightarrow -\mathbb{Z}^2$, as shown in Figure \ref{fig:embedding_A7}. The orthogonal complement is generated by $e_1-e_2$, which has square $-2\neq -8$. 
 \begin{figure}[!th]
    \centering
\begin{tikzpicture}[scale=1]
\draw (0,-1.5) node[circle, fill, inner sep=1.2pt, black]{};
\draw (0,-1.5) node[above]{$-8$};
\draw (0,-1.5) node[below]{$2e_1+2e_2$};
\end{tikzpicture}
\caption{An embedding of $Q_{X(8,1)}$ into $-\mathbb{Z}^2$.}
\label{fig:embedding_A7}
\end{figure}

\end{proof}

Recall that the remaining 7 types $E_8$, $E_7$, $E_6$, $D_5$, $A_4$, $A_2A_1$, $A_1$ can be realized by a $\mathbb{Q}$-homology $\mathbb{CP}^2$ whose smooth locus is simply-connected \cite[Lemma 6]{Miyanishi-Zhang-1988}. This concludes the proofs of Theorem \ref{thm:Gorenstein_H1=0} and Corollary \ref{cor:Gorenstein_pi1=1}.

\section{The Index Two Case (Proof of Theorem \ref{thm:index2_list_H1=0})}\label{sec:index_2_case}
Let $S$ be a $\mathbb{Q}$-homology $\mathbb{CP}^2$ of index $2$, and let $f\colon\tilde{S}\to S$ be the minimal resolution of $S$. Then, up to numerical equivalence, we have \[
K_{\tilde{S}}= f^*K_S - \sum_{p\in \textrm{Sing}(S)} D_p,
\]
where $D_p$ is an effective $\mathbb{Q}$-divisor supported on $f^{-1}(p)$. It follows that \[
K_S^2 = K_{\tilde{S}}^2- \sum_{p\in \textrm{Sing}(S)} D_p^2.
\]
Note that \[
K_{\tilde{S}}^2= 9-L
\]
where $L$ is the number of exceptional curves of $f\colon\tilde{S}\to S$ \cite[Corollary 3.4]{Hwang-Keum-2011-2}, and that \[
D_p^2 = \begin{cases}
    0, &\textrm{if $p$ is a rational double point,} \\ 
    -1, & \textrm{if $p$ is of type $K_n$}
\end{cases}
\]
(see \cite[Lemma 3.6]{Hwang-Keum-2011-2}).
Therefore, \begin{equation}\label{eq:K_index_2}
    K_{S}^2 = 9 + (\textrm{$\#$ of singularities of index 2}) - L.
\end{equation}

Now suppose that $H_1(S^0;\mathbb{Z})=0$. Then the numbers $|\det(R_p)|$ are pairwise relatively prime, and $K_S$ is not numerically trivial (Lemma \ref{lem:canonical_divisor}(1)). It follows that $S$ has exactly one $K_n$ singularity and $K_S^2=10-L>0$ by \eqref{eq:K_index_2}. Also, $S$ cannot have a singularity of type $A_n$ (for odd $n$), type $D_n$, or type $E_7$ (see Table \ref{tab:|det|}). It easily follows that the singularity type of $S$ is one of the following 28 types: 
\begin{align*}
  &  K_9,\quad K_8,\quad K_7,\quad K_7A_2,\quad K_6,\quad K_5,\quad K_5A_2,\quad K_4,\quad K_4A_2,\quad K_4A_4, \\ &
K_3,\quad K_3A_4, \quad K_3A_6,\quad K_2,\quad K_2A_2,\quad K_2A_4,\quad K_2A_6, \quad  K_2E_6,\quad 
K_2A_2A_4, \\ &  K_1,\quad K_1A_2,\quad K_1A_4, \quad K_1A_6,\quad K_1A_8,\quad K_1E_6,\quad K_1E_8,\quad K_1A_2A_4,\quad K_1A_2A_6.
\end{align*}

Among these $28$ types, we first eliminate $18$ using Lemma \ref{lem:canonical_divisor}(2).

\begin{lemma}\label{lem:D_calculation} The $18$ types listed in Table \ref{tab:D_calculation} do not occur as the singularity type of $S$.
\end{lemma}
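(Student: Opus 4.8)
The plan is to apply Lemma \ref{lem:canonical_divisor}(2). Since $H_1(S^0;\mathbb{Z})=0$, the canonical divisor $K_S$ is not numerically trivial by Lemma \ref{lem:canonical_divisor}(1), so the quantity
\[
D := K_S^2 \cdot \prod_{p\in \textup{Sing}(S)}|\det(R_p)|
\]
is forced to be a nonzero perfect square. For each of the $18$ candidate types I would compute $D$ explicitly and observe that it fails to be a perfect square; this contradiction rules out the type.

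First I would compute $K_S^2$ for every candidate using the formula \eqref{eq:K_index_2}. Since each candidate has exactly one singularity of index two (a single $K_n$ factor), equation \eqref{eq:K_index_2} reduces to $K_S^2 = 10 - L$, where $L$ is the total number of exceptional curves of the minimal resolution. Reading off the weighted dual graphs, a $K_n$ singularity contributes $n$ curves, an $A_m$ singularity contributes $m$ curves, and an $E_6$ singularity contributes $6$; summing these over the singularities gives $L$, and hence $K_S^2$.

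Next I would read the determinants from Table \ref{tab:|det|}, namely $|\det(R_{K_n})| = 4n$, $|\det(R_{A_m})| = m+1$, and $|\det(R_{E_6})| = 3$, and multiply them together with $K_S^2$ to obtain $D$ for each type. For example, for $K_7$ one finds $K_S^2 = 10 - 7 = 3$ and $\prod|\det(R_p)| = 28$, so $D = 84$, which is not a perfect square; a typical mixed case such as $K_1A_2A_6$ gives $K_S^2 = 10 - 9 = 1$ and $\prod|\det(R_p)| = 4\cdot 3\cdot 7 = 84$, so $D = 84$ again fails to be a square. I would collect these computations in a table listing $K_S^2$, $\prod|\det(R_p)|$, and $D$ for each of the $18$ types, observing in every row that $D$ is not a perfect square.

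The computation is entirely routine, so there is no genuine obstacle beyond careful bookkeeping. The one point demanding attention is reading off $L$ correctly from each dual graph---in particular remembering that a $K_n$ singularity has exactly $n$ components, with $K_1$ being a single curve---since an off-by-one error there would shift $K_S^2$ and corrupt every subsequent square-check. Once the table is assembled, Lemma \ref{lem:canonical_divisor}(2) immediately excludes all $18$ types, completing the proof.
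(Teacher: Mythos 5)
Your proposal is correct and is essentially identical to the paper's own proof: the paper likewise applies Lemma \ref{lem:canonical_divisor}(2) with $K_S^2 = 10 - L$ (the specialization of \eqref{eq:K_index_2} to one index-two singularity) and tabulates $D$ for all $18$ types, checking that each value fails to be a perfect square. Your sample computations (e.g.\ $D = 2^2\cdot 3\cdot 7$ for both $K_7$ and $K_1A_2A_6$) agree with the paper's Table \ref{tab:D_calculation}.
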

\begin{proof} By Lemma \ref{lem:canonical_divisor}(2), \[
D:=K_S^2 \cdot \prod_{p\in \textup{Sing}(S)}|\det(R_p)|
\]
must be a nonzero square number. Recall that $K_S^2=10-L$. See Table \ref{tab:D_calculation}.   
\end{proof}

\begin{table}[t]
    \centering
    \begin{tabular}{c|c || c|c}
      Type  &  $D$ &  Type  &  $D$\\ [-1em] &&&\\ 
      \hline &&&\\ [-1em]
       $K_7$ & $2^2\cdot 3\cdot 7$ & $K_3A_6$ & $2^2\cdot 3\cdot 7$\\ [-1em] &&&\\
       $K_7A_2$ & $2^2\cdot 3\cdot 7$ & $K_2A_4$ & $2^5\cdot 5$ \\ [-1em] &&&\\
       $K_6$ & $2^5\cdot 3$ & $K_2A_6$ & $2^4\cdot 7$\\ [-1em] &&&\\
       $K_5A_2$ & $2^2\cdot 3^2\cdot 5$ &$K_2E_6$ &  $2^4\cdot 3$\\ [-1em]  &&&\\
       $K_4$ & $2^5\cdot 3$ & $K_2A_2A_4$ & $2^4\cdot 3\cdot 5$\\ [-1em] &&&\\
       $K_4A_2$ & $2^6\cdot 3$ &$K_1A_2$ & $2^3\cdot 3\cdot 7$ \\ [-1em] &&&\\
       $K_4A_4$ & $2^5\cdot 5$ &$K_1A_6$ & $2^2\cdot 3\cdot 7$\\ [-1em] &&&\\
       $K_3$ & $2^2\cdot 3\cdot 7$ & $K_1A_2A_4$ & $2^2\cdot 3^2\cdot 5$ \\ [-1em]&&&\\
       $K_3A_4$ & $2^2\cdot 3^2\cdot 5$ &$K_1A_2A_6$   & $2^2\cdot 3\cdot 7$
    \end{tabular}
    \caption{Computation of $D$ for the $18$ types in Lemma \ref{lem:D_calculation}.}
    \label{tab:D_calculation}
\end{table}

\begin{lemma} The type $K_2$ does not occur as the singularity type of $S$.
\end{lemma}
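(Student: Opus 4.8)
The plan is to rule out $K_2$ with the Heegaard Floer spin $d$-invariant obstruction of Corollary \ref{cor:spin_d_2}, since the coarser tools used so far do not apply. Indeed, if $S$ had singularity type $K_2$, its single link would be $L(K_2)=L(8,3)$ with $H_1(L(8,3);\mathbb{Z})=\mathbb{Z}_8$ cyclic, so Corollary \ref{cor:linking_form} imposes no restriction; and with $K_S^2=10-L=8$ the determinant test gives $D=8\cdot|\det(R_p)|=8\cdot 8=64=8^2$, a nonzero square, so Lemma \ref{lem:canonical_divisor}(2) is also satisfied. Hence a finer invariant is genuinely required.

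The key observation is that the parity of $|H_1|$ unlocks the $d$-invariant obstruction. Since $S$ has the single singularity $K_2$, the product $\prod_{p\in\textup{Sing}(S)}|H_1(L_p;\mathbb{Z})|=8$ is even, so Corollary \ref{cor:spin_d_2} would force the existence of a spin structure $\mathfrak{s}$ on $L(8,3)$ with $d(L(8,3),\mathfrak{s})=\tfrac14$. I would then compare this against the actual spin $d$-invariants of the $K_n$-link recorded in Example \ref{ex:d_invs_An_Kn}: the link $L(4n,2n-1)$ has exactly two spin structures, and for \emph{even} $n$ both of them have $d$-invariant $-\tfrac14$. Specializing to $n=2$ shows that both spin $d$-invariants of $L(8,3)$ equal $-\tfrac14\neq\tfrac14$, contradicting the requirement produced by Corollary \ref{cor:spin_d_2}. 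Therefore $K_2$ cannot be the singularity type of $S$.

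There is essentially no hard step here; once one notices that $|H_1(L(8,3);\mathbb{Z})|$ is even, the argument reduces to the already-tabulated values in Example \ref{ex:d_invs_An_Kn}, and the only thing to watch is the sign. As an independent check one could instead run Donaldson's theorem through Corollary \ref{cor:orthogonal_complement}: the orientation reversal $-L(8,3)=L(8,5)$ bounds $X(8,5)$ with linear plumbing $(-2,-3,-2)$, so one must decide whether $Q_{X(8,5)}$ embeds in $-\mathbb{Z}^4$ with orthogonal complement generated by a vector of square $-8$. Normalizing the square-$3$ generator to $e_1+e_2+e_3$ and enumerating the compatible square-$2$ vectors shows that, up to an automorphism of $-\mathbb{Z}^4$, there is a unique such embedding, whose orthogonal complement is spanned by $e_2-e_3$ of square $-2\neq-8$. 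Either route completes the proof, but I would present the $d$-invariant version as the main argument since it is the shortest.
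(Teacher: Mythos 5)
Your proof is correct, but it takes a genuinely different route from the paper's. The paper argues algebro-geometrically: since $K_S^2=10-L=8>0$, the canonical class must be ample or anti-ample; anti-ample is excluded because $K_2$ does not appear in the Alexeev--Nikulin list (Theorem \ref{thm:Alexeev-Nikulin}), and ample is excluded by the orbifold BMY inequality (Theorem \ref{thm:oBMY}(1)), since $3e_{\textup{orb}}(S)=51/8<8=K_S^2$. Your argument instead stays entirely inside the $4$-manifold toolkit of Section \ref{sec:preliminaries}: Corollary \ref{cor:spin_d_2} demands a spin structure on $L(8,3)$ with $d$-invariant $+\frac14$, while both spin structures of $L(8,3)$ have $d=-\frac14$ (this matches Example \ref{ex:d_invs_An_Kn} for even $n$, and is confirmed by the recursion (\ref{eq:d-invariant}) with spin structures $i=1,5$); your Donaldson backup is also correct---up to automorphism the unique embedding of $Q_{X(8,5)}$ into $-\mathbb{Z}^4$ sends the chain $(-2,-3,-2)$ to $-e_1+e_4,\ e_1+e_2+e_3,\ -e_1-e_4$, with orthogonal complement generated by $e_2-e_3$ of square $-2\neq-8$, contradicting Corollary \ref{cor:orthogonal_complement}. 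The trade-offs are worth noting: the paper's proof never uses the hypothesis $H_1(S^0;\mathbb{Z})=0$, so it actually shows that \emph{no} $\mathbb{Q}$-homology $\mathbb{CP}^2$ has singularity type $K_2$, a strictly stronger statement, but it leans on the deep external classification of Alexeev--Nikulin; your proof requires the standing hypothesis $H_1(S^0;\mathbb{Z})=0$ of Section \ref{sec:index_2_case} (which is all the lemma needs in context), and in exchange makes the exclusion self-contained and uniform with the other lemmas of the section, removing any dependence on Theorem \ref{thm:Alexeev-Nikulin}. One small quibble: your claim that a finer invariant is ``genuinely required'' overstates the situation, since the purely topological linking-form comparison of Lemma \ref{lem:index2_linkingform} also suffices here---by Lemma \ref{lem:linking_form}, the linking form of $\partial Z^0=-L(8,3)=L(8,5)$ is represented by $\left(\frac{5}{8}\right)\cong\left(-\frac{3}{8}\right)$, whereas the intersection form $(8)$ of $Z^0$ forces $\left(-\frac{1}{8}\right)$, and these are not isomorphic because $3$ is not a quadratic residue modulo $8$.
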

\begin{proof} Suppose $S$ is a $\mathbb{Q}$-homology $\mathbb{CP}^2$ with singularity type $K_2$. Then $K_S^2=8>0$ by (\ref{eq:K_index_2}), so $K_S$ is either ample or anti-ample. However, $K_S$ cannot be anti-ample by Theorem \ref{thm:Alexeev-Nikulin}. Thus, $K_S$ is ample, and $S$ must satisfy the orbifold BMY inequality (Theorem \ref{thm:oBMY}(1)). However, \[
3e_{\textup{orb}}(S)= 3\cdot \dfrac{17}{8} =\frac{51}{8} < 8=K_S^2,
\]
which is a contradiction. \end{proof}

\begin{lemma}\label{lem:index2_complement} The types $K_9$, $K_8$, and $K_1A_8$ do not occur as the singularity type of $S$.     
\end{lemma}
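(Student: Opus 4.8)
The plan is to eliminate the three types $K_9$, $K_8$, and $K_1A_8$ using the smooth obstructions developed in Section \ref{subsec:top_and_smooth_obstructions}, specifically Corollary \ref{cor:orthogonal_complement}, exactly as was done for the types $A_8$ and $A_7$ in the index-one case. For each type I would first identify the relevant links and their orientation-reversed negative-definite fillings via Example \ref{ex:link_of_An_Kn}, then write down all lattice embeddings $\bigoplus_p Q_{X_p}\hookrightarrow -\mathbb{Z}^{n+1}$, and finally check whether the orthogonal complement of the image can have square equal to $-\prod_p|H_1(L_p;\mathbb{Z})|$. If no embedding produces an orthogonal complement of the required square, the type is ruled out.

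For $K_9$ and $K_8$ there is a single singularity, so by Example \ref{ex:link_of_An_Kn} the orientation reversal $-L(4n,2n-1)=L(4n,2n+1)$ bounds the plumbed $4$-manifold $X(4n,2n+1)$, whose plumbing graph is the three-vertex chain with weights $-2,-(n+1),-2$ (Figure \ref{fig:plumbing_graph_of_X(4n,2n+1)}). Thus $Q_{X_p}$ has rank $3$, and I would embed it into $-\mathbb{Z}^4$ and examine the rank-one orthogonal complement, whose generator must have square $-4n$ by Corollary \ref{cor:orthogonal_complement}. Concretely, I would parametrize the images of the three basis vectors as integer combinations of $e_1,\dots,e_4$ subject to the pairing constraints (two vectors of square $-2$, one of square $-(n+1)$, adjacent vectors pairing to $-1$ (really $+1$ in our sign convention) and non-adjacent vectors orthogonal), classify these embeddings up to automorphism of $-\mathbb{Z}^4$, and compute the orthogonal complement in each case. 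I expect that the square-$(-2)$ vectors force a rigid shape (each being of the form $e_i-e_j$), which then pins down the square-$(-(n+1))$ vertex and leaves the complement with square too small, i.e.\ $\neq -4n$ for $n=9$ and $n=8$.

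For $K_1A_8$ there are two singularities, so $X$ is the disjoint union of $X(4,3)$ (filling $-L(K_1)=L(4,3)$) and $X(9,1)$ (filling $-L(A_8)=L(9,8)$). Here $K_1$ is the single $-4$ vertex, so $X(4,3)$ is the one-vertex plumbing $(-4)$; combined with $X(9,1)=(-9)$ this gives a rank-two lattice $\langle-4\rangle\oplus\langle-9\rangle$ to embed into $-\mathbb{Z}^3$, with the orthogonal complement required to have square $-\prod_p|H_1(L_p;\mathbb{Z})|=-(4\cdot 9)=-36$. I would enumerate embeddings of a square-$(-4)$ vector and a square-$(-9)$ vector that are mutually orthogonal, reduce by the automorphism group of $-\mathbb{Z}^3$, and check the complement's square in each case; I anticipate the complement comes out with square strictly larger than $-36$ (in absolute value too small), contradicting Corollary \ref{cor:orthogonal_complement}.

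The main obstacle is the embedding classification for the rank-$3$ lattices $Q_{X(4n,2n+1)}$ into $-\mathbb{Z}^4$: unlike the rank-one cases of $A_8$ and $A_7$, there may be several inequivalent embeddings to track, and I must be careful to account for all of them before concluding that none yields a complement of square $-4n$. The key simplification I would lean on is that the two pendant $-2$ vertices are highly constrained (each must map to a difference $e_i-e_j$ of distinct standard basis vectors, up to sign and permutation), which drastically limits the possible images and makes the finite case-check manageable; the central $-(n+1)$ vertex is then determined by its orthogonality and pairing relations with the pendants.
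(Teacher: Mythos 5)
Your overall strategy---apply Corollary \ref{cor:orthogonal_complement}, take the negative-definite fillings of the orientation-reversed links from Example \ref{ex:link_of_An_Kn}, enumerate embeddings into $-\mathbb{Z}^{n+1}$, and check the square of the orthogonal complement---is exactly the paper's method, and your treatment of $K_9$ and $K_8$ (embedding the rank-$3$ lattice of the chain $\langle -2,-(n+1),-2\rangle$ into $-\mathbb{Z}^4$ and checking that no complement has square $-36$, resp.\ $-32$) is the paper's argument in outline; the paper finds two embeddings for $K_9$ with complements of square $-1$ and $-4$, and one for $K_8$ with complement of square $-2$, confirming your expectation.

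However, your $K_1A_8$ case contains a genuine error. You write that ``$X(4,3)$ is the one-vertex plumbing $(-4)$,'' but $X(4,3)$ is by definition the linear plumbing given by the continued fraction $4/3=[2,2,2]$, i.e.\ a chain of three $(-2)$-vertices, so $Q_{X(4,3)}$ has rank $3$. The one-vertex $(-4)$-plumbing is $X(4,1)$, whose boundary is $L(4,1)=L(K_1)$ itself --- the \emph{wrong} orientation. Corollary \ref{cor:orthogonal_complement} requires each $X_p$ to fill $-L_p$, because the closed manifold is formed as $(-W^0)\cup_\partial X$; a filling of $L_p$ cannot be glued there, and gluing it to $W^0$ instead would produce a manifold with $b_2^+=b_2^-=1$, to which Donaldson's theorem does not apply. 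In fact $L(4,3)$ admits no negative-definite filling with trivial $H_1$ and intersection form $\langle -4\rangle$ at all: such a filling would force the linking form of $L(4,3)$ to be $\left(\frac{1}{4}\right)$, whereas it is $\left(-\frac{1}{4}\right)$, and $-1$ is not a square modulo $4$. So although your rank-$2$ computation happens to reach the same conclusion, it is not a proof. The correct embedding problem, as in the paper, is $Q_{X(4,3)}\oplus Q_{X(9,1)}$ (rank $4$, with $X(9,1)=\langle -9\rangle$) into $-\mathbb{Z}^5$; there are exactly two embeddings up to automorphism, with orthogonal complements of square $-1$ and $-4$, neither equal to $-36$.
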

\begin{proof} We analyze the condition from Corollary \ref{cor:orthogonal_complement} in each case.

(1) Type $K_9$: The link of a singularity of type $K_9$ is the lens space $L(36,17)$ (Example \ref{ex:link_of_An_Kn}), and its orientation reversal $-L(36,17)=L(36,19)$ bounds the negative definite 4-manifold $X(36,19)$. Up to an automorphism of $-\mathbb{Z}^4$, there are exactly two distinct embeddings $Q_{X(36,19)} \hookrightarrow -\mathbb{Z}^4$, as shown in Figure \ref{fig:embedding_K9}. For the first embedding, the orthogonal complement is generated by $e_4$, and for the second embedding, the orthogonal complement is generated by $e_1-e_2-e_3+e_4$. Since $e_4^2=-1\neq -36$ and $(e_1-e_2-e_3+e_4)^2=-4 \neq -36$ for each embedding, the condition from Corollary \ref{cor:orthogonal_complement} is not satisfied.

      \begin{figure}[!th]
    \centering
\begin{tikzpicture}[scale=1]
\draw (-2,0) node[circle, fill, inner sep=1.2pt, black]{};
\draw (0,0) node[circle, fill, inner sep=1.2pt, black]{};
\draw (2,0) node[circle, fill, inner sep=1.2pt, black]{};

\draw (-2,0) node[above]{$-2$};
\draw (0,0) node[above]{$-10$};
\draw (2,0) node[above]{$-2$};
\draw (-2,0)--(2,0);

\draw (-2,0) node[below]{$e_1-e_2$};
\draw (0,0) node[below]{$-e_1+3e_3$};
\draw (2,0) node[below]{$e_1+e_2$};

\draw (5,0) node[circle, fill, inner sep=1.2pt, black]{};
\draw (7,0) node[circle, fill, inner sep=1.2pt, black]{};
\draw (9,0) node[circle, fill, inner sep=1.2pt, black]{};

\draw (5,0) node[above]{$-2$};
\draw (7,0) node[above]{$-10$};
\draw (9,0) node[above]{$-2$};
\draw (5,0)--(9,0);

\draw[densely dotted] (7,0)--(7,-0.5);

\draw (5,0) node[below]{$e_1+e_2$};
\draw (7,-0.5) node[below]{$-2e_1+e_2-2e_3+e_4$};
\draw (9,0) node[below]{$e_3+e_4$};
\end{tikzpicture}
\caption{Two embeddings of $Q_{X(36,19)}$ into $-\mathbb{Z}^4$.}
\label{fig:embedding_K9}
\end{figure}
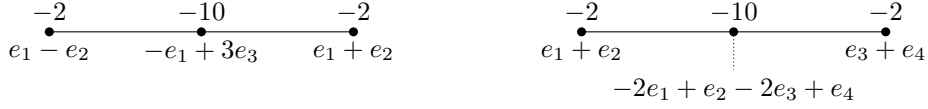

(2) Type $K_8$: The corresponding link is $L(32,15)$, and $-L(32,15)=L(32,17)$ bounds $X(32,17).$ Up to an automorphism of $-\mathbb{Z}^4$, there is a unique embedding $Q_{X(32,17)} \hookrightarrow -\mathbb{Z}^4$, as shown in Figure \ref{fig:embedding_K8}. The orthogonal complement is generated by $e_3-e_4$, which has square $-2\neq -32$.

 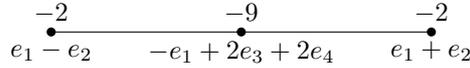
\begin{figure}[!th]
    \centering
\begin{tikzpicture}[scale=1]
\draw (-2.5,0) node[circle, fill, inner sep=1.2pt, black]{};
\draw (0,0) node[circle, fill, inner sep=1.2pt, black]{};
\draw (2.5,0) node[circle, fill, inner sep=1.2pt, black]{};

\draw (-2.5,0) node[above]{$-2$};
\draw (0,0) node[above]{$-9$};
\draw (2.5,0) node[above]{$-2$};
\draw (-2.5,0)--(2.5,0);

\draw (-2.5,0) node[below]{$e_1-e_2$};
\draw (0,0) node[below]{$-e_1+2e_3+2e_4$};
\draw (2.5,0) node[below]{$e_1+e_2$};
\end{tikzpicture}
\caption{An embedding of $Q_{X(32,15)}$ into $-\mathbb{Z}^4$.}
\label{fig:embedding_K8}
\end{figure}

(3) Type $K_1A_8$: The links corresponding to $K_1$ and $A_8$ are $L(4,1)$ and $L(9,8)$, respectively. $-L(4,1)$ and $-L(9,8)$ bound $X(4,3)$ and $X(9,1)$, respectively. Up to an automorphism of $-\mathbb{Z}^5$, there are exactly two distinct embeddings $Q_{X(4,3)}\oplus Q_{X(9,1)} \hookrightarrow -\mathbb{Z}^5$, as shown in Figure \ref{fig:embedding_K1A8}. 

For the first embedding, the orthogonal complement is generated by $e_5$ which has square $-1\neq -36$. In the second case, the orthogonal complement is generated by $e_2+e_3+e_4-e_5$, which has square $-4\neq -36$.

 \begin{figure}[!th]
    \centering
\begin{tikzpicture}[scale=1]
\draw (-2,0) node[circle, fill, inner sep=1.2pt, black]{};
\draw (0,0) node[circle, fill, inner sep=1.2pt, black]{};
\draw (2,0) node[circle, fill, inner sep=1.2pt, black]{};
\draw (0,-1.5) node[circle, fill, inner sep=1.2pt, black]{};

\draw (-2,0) node[above]{$-2$};
\draw (0,0) node[above]{$-2$};
\draw (2,0) node[above]{$-2$};
\draw (0,-1.5) node[above]{$-9$};
\draw (-2,0)--(2,0);

\draw (-2,0) node[below]{$e_1-e_2$};
\draw (0,0) node[below]{$-e_1+e_3$};
\draw (2,0) node[below]{$e_1+e_2$};
\draw (0,-1.5) node[below]{$3e_4$};

\draw (5,0) node[circle, fill, inner sep=1.2pt, black]{};
\draw (7,0) node[circle, fill, inner sep=1.2pt, black]{};
\draw (9,0) node[circle, fill, inner sep=1.2pt, black]{};
\draw (7,-1.5) node[circle, fill, inner sep=1.2pt, black]{};

\draw (5,0) node[above]{$-2$};
\draw (7,0) node[above]{$-2$};
\draw (9,0) node[above]{$-2$};
\draw (5,0)--(9,0);
\draw (7,-1.5) node[above]{$-9$};

\draw (5,0) node[below]{$-e_2+e_3$};
\draw (7,0) node[below]{$e_2-e_4$};
\draw (9,0) node[below]{$e_4+e_5$};
\draw (7,-1.5) node[below]{$3e_1$};
\end{tikzpicture}
\caption{Two embeddings of $Q_{X(4,3)}\oplus Q_{X(9,1)}$ into $-\mathbb{Z}^5$.}
\label{fig:embedding_K1A8}
\end{figure}
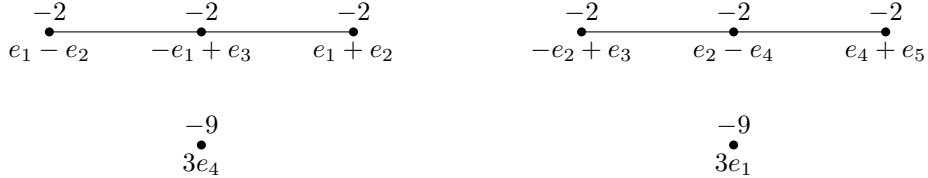
\end{proof}

\begin{lemma}\label{lem:index2_linkingform} The type $K_1E_6$ does not occur as the singularity type of $S$.
\end{lemma}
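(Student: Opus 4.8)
The plan is to rule out $K_1E_6$ using the Heegaard Floer $d$-invariant obstruction of Corollary \ref{cor:spin_d_2}, since for this type the algebraic constraints are not enough: one checks that $K_S^2 = 10 - L = 3$ (here $L = 1 + 6 = 7$), so $D = 3\cdot 4\cdot 3 = 36$ is a perfect square and Lemma \ref{lem:canonical_divisor}(2) gives no contradiction. First I would record that for the type $K_1E_6$ the product of the orders of the first homologies of the links is $\prod_{p}|H_1(L_p;\mathbb{Z})| = 4\cdot 3 = 12$, which is even, so the hypothesis of Corollary \ref{cor:spin_d_2} is met: if such an $S$ with $H_1(S^0;\mathbb{Z})=0$ existed, there would have to be spin structures $\mathfrak{s}_{K_1}$ on $L(K_1)$ and $\mathfrak{s}_{E_6}$ on $L(E_6)$ with $d(L(K_1),\mathfrak{s}_{K_1}) + d(L(E_6),\mathfrak{s}_{E_6}) = \tfrac14$.

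Next I would compute the relevant spin $d$-invariants. The link of $K_1$ is $L(4,1)$, which, being the link of a $K_n$ with $n=1$ odd, carries exactly two spin structures with $d$-invariants $-\tfrac34$ and $\tfrac14$ by Example \ref{ex:d_invs_An_Kn}. The link $L(E_6)$ has $H_1(L(E_6);\mathbb{Z})\cong\mathbb{Z}_3$ (Example \ref{ex:d_invs_En}), so it carries a unique spin structure, and its three spin$^c$ $d$-invariants are $\tfrac32,\tfrac16,\tfrac16$ by Table \ref{tab:d_invs}. To single out the spin value I would use that a spin structure is self-conjugate while the $d$-invariant is invariant under conjugation: the two non-spin structures form a conjugate pair and therefore share the value $\tfrac16$, so the self-conjugate spin structure must be the remaining one, giving $d(L(E_6),\mathfrak{s}_{E_6}) = \tfrac32$.

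Finally I would examine every admissible sum. The only possibilities are $-\tfrac34 + \tfrac32 = \tfrac34$ and $\tfrac14 + \tfrac32 = \tfrac74$, and neither equals $\tfrac14$. This contradicts Corollary \ref{cor:spin_d_2}, so $K_1E_6$ cannot be the singularity type of $S$.

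I expect the only delicate point to be the identification of the spin $d$-invariant of $L(E_6)$ among the three listed values; the conjugation-symmetry argument settles it cleanly, but if a more self-contained justification is preferred one can pin the spin value down directly from the $(-3)$-surgery description of $L(E_6)$ on the left-handed trefoil via the Ni--Wu formula (Proposition \ref{prop:Ni-Wu}), exactly as in Example \ref{ex:d_invs_En}.
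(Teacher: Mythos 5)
Your proof is correct, but it takes a genuinely different route from the paper. The paper rules out $K_1E_6$ with a purely topological argument: the intersection form of $Z^0$ is $(12)$, so by Lemma \ref{lem:linking_form}(1) the linking form on $H_1(\partial Z^0;\mathbb{Z})\cong\mathbb{Z}_{12}$ must be $\left(-\frac{1}{12}\right)$ up to isomorphism, whereas computing it from the surgery descriptions of $L(4,3)$ and $-L(E_6)$ gives $\left(-\frac{7}{12}\right)$; since $7$ is not a quadratic residue modulo $12$, these forms are not isomorphic. You instead use the spin $d$-invariant obstruction (Corollary \ref{cor:spin_d_2}), which is the tool the paper reserves for the next lemma ($K_1E_8$, where the linking form gives no contradiction). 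All your computations check out: the hypothesis $\prod_p|H_1(L_p;\mathbb{Z})|=12$ even holds; $L(4,1)$ has spin $d$-invariants $-\frac34$ and $\frac14$; and your conjugation-symmetry argument correctly pins down the spin value of $L(E_6)$ as $\frac32$, since the two non-self-conjugate spin$^c$ structures must share a $d$-invariant, forcing them to be the pair with value $\frac16$. The resulting sums $\frac34$ and $\frac74$ both miss $\frac14$, so the contradiction is genuine. The trade-off: the paper's linking-form argument is a homeomorphism-level obstruction requiring no Floer theory, and is arguably more elementary; your argument has the merit of uniformity, handling $K_1E_6$ and $K_1E_8$ with a single tool, at the cost of invoking the Heegaard Floer machinery (Proposition \ref{prop:spin_cobordism}) and the extra step of identifying which spin$^c$ structure on $L(E_6)$ is spin --- a step the paper never needs for this type.
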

\begin{proof} Suppose $S$ is a $\mathbb{Q}$-homology $\mathbb{CP}^2$ with singularity type $K_1E_6$ such that $H_1(S^0;\mathbb{Z})=0$. Consider the smooth 4-manifold $Z^0$ constructed in Section \ref{subsec:top_and_smooth_obstructions}, whose boundary is \[
    \partial Z^0=-L(4,1)\# -L(E_6)=L(4,3)\# -L(E_6).\]
The smooth 4-manifold $Z^0$ has intersection form $(12)$ by Corollary \ref{cor:linking_form}, so the linking form on $H_1(\partial Z^0;\mathbb{Z})=\mathbb{Z}_{12}$ is represented by $\left(-\dfrac{1}{12}\right)$ by Lemma \ref{lem:linking_form}(1). Note that symmetric bilinear $\mathbb{Q}/\mathbb{Z}$-valued forms on $\mathbb{Z}_{12}$ isomorphic to $\left(-\dfrac{1}{12}\right)$ are precisely those of the form $\left(-\dfrac{a^2}{12}\right)$, where $(a,12)=1$.

On the other hand, recall that $L(4,3)$ is obtained from $S^3$ by $\left(-{4}/{3}\right)$-surgery along the unknot, and that $-L(E_6)$ is obtained by $(+3)$-surgery along the right-handed trefoil knot (Example \ref{ex:d_invs_En}). By Lemma \ref{lem:linking_form}(2), the linking form on $H_1(L(4,3);\mathbb{Z})=\mathbb{Z}_4$ is represented by $\left(\displaystyle\frac{3}{4}\right)$, and the linking form on $H_1(-L(E_6);\mathbb{Z})=\mathbb{Z}_3$ is represented by $\left(-\dfrac{1}{3}\right)$.
Via the isomorphism $H_1(L(4,3)\# -L(E_6);\mathbb{Z})=\mathbb{Z}_4\oplus \mathbb{Z}_3 \cong \mathbb{Z}_{12}$, given by $(1,1)\mapsto 1$, we see that the linking form on $H_1(L(4,3)\# -L(E_6);\mathbb{Z})=\mathbb{Z}_{12}$ is represented by $\left(\displaystyle\frac{5}{12}\right) \cong \left(-\displaystyle\frac{7}{12}\right)$. However, since $7$ is not a quadratic residue modulo $12$, the form $\left(-\dfrac{7}{12}\right)$ is not isomorphic to $\left(-\dfrac{1}{12}\right)$, which leads to a contradiction. 
\end{proof}

\begin{lemma} The type $K_1E_8$ does not occur as the singularity type of $S$.
\end{lemma}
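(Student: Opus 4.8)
The plan is to use the spin Heegaard Floer $d$-invariant obstruction of Corollary \ref{cor:spin_d_2} rather than the Donaldson-embedding argument employed in the preceding lemmas. First I would record the links: the $K_1$ singularity has link $L(4,1)$ (Example \ref{ex:link_of_An_Kn}), and the $E_8$ singularity has link $L(E_8)$, which is the Poincar\'e homology sphere $\Sigma(2,3,5)$ and hence an \emph{integral} homology sphere with $H_1(L(E_8);\mathbb{Z})=0$. Thus $n:=\prod_{p}|H_1(L_p;\mathbb{Z})|=4\cdot 1=4$ is even, so the hypotheses of Corollary \ref{cor:spin_d_2} are met. The reason I would not attempt to run Corollary \ref{cor:orthogonal_complement} here is that $-L(E_8)$ does not bound a negative definite smooth $4$-manifold with trivial $H_1$: by Donaldson's theorem such a manifold would have diagonal form, forcing $d(-L(E_8))\geq 0$, whereas $d(-L(E_8))=-2$. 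So the lattice-embedding method is unavailable, and the spin $d$-invariant obstruction is the natural substitute.

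Assuming $S$ has singularity type $K_1E_8$ with $H_1(S^0;\mathbb{Z})=0$, Corollary \ref{cor:spin_d_2} supplies spin structures $\mathfrak{s}_{K_1}$ on $L(4,1)$ and $\mathfrak{s}_{E_8}$ on $L(E_8)$ with
\[
d(L(4,1),\mathfrak{s}_{K_1})+d(L(E_8),\mathfrak{s}_{E_8})=\frac{1}{4}.
\]
Now I would insert the already-computed values. By Example \ref{ex:d_invs_An_Kn} (the $K_n$ case with $n=1$, which is odd), the two spin $d$-invariants of $L(4,1)=L(K_1)$ are $-3/4$ and $1/4$. By Example \ref{ex:d_invs_En} and Table \ref{tab:d_invs}, $L(E_8)$ carries a unique spin structure (being an integral homology sphere) with $d(L(E_8))=2$. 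Hence the left-hand side above equals either $-\tfrac{3}{4}+2=\tfrac{5}{4}$ or $\tfrac{1}{4}+2=\tfrac{9}{4}$, and in neither case is it $\tfrac14$. This contradiction shows that $K_1E_8$ cannot occur.

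The computation is short, so the only real care needed is bookkeeping: I must keep the orientation conventions consistent with Corollary \ref{cor:spin_d_2}, where the identity $\sum_p d(L_p,\mathfrak{s}_p)=\tfrac14$ is obtained from $d(\partial Z^0,\mathfrak{s})=-\tfrac14$ together with $\partial Z^0=\#_p(-L_p)$ and the sign flip $d(-L_p,\mathfrak{s}_p)=-d(L_p,\mathfrak{s}_p)$; in particular the relevant $d$-invariants are those of the links $L_p$ with their standard orientations, exactly as tabulated in Examples \ref{ex:d_invs_An_Kn} and \ref{ex:d_invs_En}. I would also double-check the parity step $n=4$ to confirm that $Z^0$ is spin, since that is what licenses the use of Corollary \ref{cor:spin_d_2} in the first place. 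Beyond these orientation and parity checks, there is no substantive obstacle.
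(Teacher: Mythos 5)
Your proof is correct and is essentially identical to the paper's own argument: both invoke Corollary \ref{cor:spin_d_2} and compare the spin $d$-invariants $-\tfrac{3}{4},\tfrac{1}{4}$ of the $K_1$ link with $d(L(E_8))=2$ to rule out the sum $\tfrac{1}{4}$. Your added observation that the lattice-embedding route fails because $-L(E_8)$ bounds no negative definite $4$-manifold (since $d(-L(E_8))=-2<0$) is a correct and reasonable justification for choosing this obstruction, and your link $L(4,1)$ is in fact the one consistent with Example \ref{ex:link_of_An_Kn}.
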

\begin{proof} Consider the corresponding links $L(K_1)=L(4,3)$ and $L(E_8)$. The link $L(K_1)$ has two spin structures with $d$-invariants $-{3}/{4}$ and ${1}/{4}$ (Example \ref{ex:d_invs_An_Kn}), while $L(E_8)$ has a unique spin$^c$ structure, which is induced by the unique spin structure, with the $d$-invariant of $2$ (Example \ref{ex:d_invs_En}). Hence, $d$-invariants of $L(K_1)\#L(E_8)$ cannot satisfy the constraint given in Corollary \ref{cor:spin_d_2}. \end{proof}

Recall that the remaining 4 types $K_5$, $K_2A_2$, $K_1$, and $K_1A_4$ can be realized as a $\mathbb{Q}$-homology $\mathbb{CP}^2$ whose smooth locus is simply-connected (Theorem \ref{thm:Kojima}). This concludes the proofs of Theorem \ref{thm:index2_list_H1=0} and Corollary \ref{cor:index2_list_pi1=1}.

\section{The Index Three Case (Proof of Theorem \ref{thm:index3_list})}\label{sec:index3_case}
Recall that a quotient singularity whose germ is locally analytically isomorphic to $(\mathbb{C}^2/G,0)$ is of index 3 if and only if $[G:G\cap \textup{SL}(2,\mathbb{C})]=3$, where $G\subset \textup{GL}(2,\mathbb{C})$ is a finite subgroup which does not contain any reflections. Following Brieskorn's notation \cite[Section 2]{Brieskorn-1968}, such a group $G$ is one of the following \cite[Section 2]{Ohashi-Taki-2012}: 

\begin{enumerate}[label=\textup{(\arabic*)}]
\item $G=C_{n,q}$, where $0<q<n$, $(n,q)=1$, and $\displaystyle\frac{1+q}{n}\in \frac{1}{3}\mathbb{Z}\setminus \mathbb{Z}$, 
\item $G=(\mathbb{Z}_6,\mathbb{Z}_6;\mathbb{D}_2,\mathbb{D}_2)$,
\item $G=(\mathbb{Z}_6,\mathbb{Z}_6;\mathbb{D}_n,\mathbb{D}_n)$, where $n\not\equiv 0 \mod 3$ and $n\geq 4$. 
\end{enumerate}
For the cyclic case (1), the weighted dual graph of the reduced exceptional divisor of a minimal resolution of the corresponding singularity is one of those listed in Table \ref{tab:singularities_of_index_3}. For the non-cyclic cases (2) and (3), the weighted dual graph is one of those listed in Table \ref{tab:noncyclic_singularities_of_index_3} \cite[Table 1]{Ohashi-Taki-2012}. For singularities corresponding to cases (2) and (3), we have $|\det(R_p)|=12$ (see \cite[Lemma 3.7]{Hwang-Keum-2011-2}).

\begin{table}[t]
    \centering
    \begin{tabular}{c|c|c}
      Type  &  Dual graph &  Link  \\ [-1em] &&\\  
      \hline 
       $A_1(1)$ &  \begin{tikzpicture}[baseline=0]
\draw (0,0) node[circle, fill, inner sep=1.2pt, black]{};
\draw (0,0) node[above]{$-3$};
\end{tikzpicture}
& $L(3,1)$ \\ [-0.8em] &&\\
$A_1(2)$ & \begin{tikzpicture}[baseline=0]
\draw (0,0) node[circle, fill, inner sep=1.2pt, black]{};
\draw (0,0) node[above]{$-6$};
\end{tikzpicture} & $L(6,1)$  \\ [-0.8em] &&\\
      \hline $A_2(1,2)$ & \begin{tikzpicture}[baseline=0]
\draw (0,0) node[circle, fill, inner sep=1.2pt, black]{};
\draw (1,0) node[circle, fill, inner sep=1.2pt, black]{};
\draw (0,0) node[above]{$-2$};
\draw (1,0) node[above]{$-5$};
\draw (0,0)--(1,0); 
\end{tikzpicture} & $L(9,5)$  \\ [-0.8em] &&\\
      \hline
      $A_3(1,1)$ &  \begin{tikzpicture}[baseline=0]
\draw (0,0) node[circle, fill, inner sep=1.2pt, black]{};
\draw (1,0) node[circle, fill, inner sep=1.2pt, black]{};
\draw (2,0) node[circle, fill, inner sep=1.2pt, black]{};
\draw (0,0) node[above]{$-2$};
\draw (1,0) node[above]{$-4$};
\draw (2,0) node[above]{$-2$};
\draw (0,0)--(2,0); 
\end{tikzpicture} & $L(12,7)$ \\ [-0.8em] &&\\
      \hline
      $A_n(1,1)$ $(n\geq 4)$ & \begin{tikzpicture}[baseline=0]
\draw (-3,0) node[circle, fill, inner sep=1.2pt, black]{};
\draw (-2,0) node[circle, fill, inner sep=1.2pt, black]{};
\draw (-1,0) node[circle, fill, inner sep=1.2pt, black]{};
\draw (1,0) node[circle, fill, inner sep=1.2pt, black]{};
\draw (2,0) node[circle, fill, inner sep=1.2pt, black]{};
\draw (3,0) node[circle, fill, inner sep=1.2pt, black]{};
\draw (-3,0) node[above]{$-2$};
\draw (-2,0) node[above]{$-3$};
\draw (-1,0) node[above]{$-2$};
\draw (1,0) node[above]{$-2$};
\draw (2,0) node[above]{$-3$};
\draw (3,0) node[above]{$-2$};
\draw (0,0) node{$\cdots$};
\draw (-3,0)--(-1,0) (-1,0)--(-0.5,0) (0.5,0)--(1,0)  (1,0)--(3,0) ;
\draw [thick,decorate,decoration={mirror,brace,amplitude=3pt},xshift=0pt]
	(-1,-0.2) -- (1,-0.2) node [black,midway,xshift=0pt,yshift=-10pt] 
	{$n-4$};
\end{tikzpicture} & $L(9n-15,6n-11)$ \\ [-0.8em] &&\\
$A_n(1,2)$ $(n\geq 3)$ & \begin{tikzpicture}[baseline=0]
\draw (-3,0) node[circle, fill, inner sep=1.2pt, black]{};
\draw (-2,0) node[circle, fill, inner sep=1.2pt, black]{};
\draw (-1,0) node[circle, fill, inner sep=1.2pt, black]{};
\draw (1,0) node[circle, fill, inner sep=1.2pt, black]{};
\draw (2,0) node[circle, fill, inner sep=1.2pt, black]{};
\draw (-3,0) node[above]{$-2$};
\draw (-2,0) node[above]{$-3$};
\draw (-1,0) node[above]{$-2$};
\draw (1,0) node[above]{$-2$};
\draw (2,0) node[above]{$-4$};
\draw (0,0) node{$\cdots$};
\draw (-3,0)--(-1,0) (-1,0)--(-0.5,0) (0.5,0)--(1,0)  (1,0)--(2,0) ;
\draw [thick,decorate,decoration={mirror,brace,amplitude=3pt},xshift=0pt]
	(-1,-0.2) -- (1,-0.2) node [black,midway,xshift=0pt,yshift=-10pt] 
	{$n-3$};
\end{tikzpicture} & $L(9n-9,6n-7)$ \\ [-0.8em] &&\\
$A_n(2,2)$ $(n\geq 2)$ & \begin{tikzpicture}[baseline=0]
\draw (-2,0) node[circle, fill, inner sep=1.2pt, black]{};
\draw (-1,0) node[circle, fill, inner sep=1.2pt, black]{};
\draw (1,0) node[circle, fill, inner sep=1.2pt, black]{};
\draw (2,0) node[circle, fill, inner sep=1.2pt, black]{};
\draw (-2,0) node[above]{$-4$};
\draw (-1,0) node[above]{$-2$};
\draw (1,0) node[above]{$-2$};
\draw (2,0) node[above]{$-4$};
\draw (0,0) node{$\cdots$};
\draw (-2,0)--(-1,0) (-1,0)--(-0.5,0) (0.5,0)--(1,0)  (1,0)--(2,0) ;
\draw [thick,decorate,decoration={mirror,brace,amplitude=3pt},xshift=0pt]
	(-1,-0.2) -- (1,-0.2) node [black,midway,xshift=0pt,yshift=-10pt] 
	{$n-2$};
\end{tikzpicture} & $L(9n-3,3n-2)$
    \end{tabular}
    \caption{Cyclic quotient singularities of index three.}   \label{tab:singularities_of_index_3}
\end{table}

\subsection{Obstructions}
Let $S$ be a $\mathbb{Q}$-homology $\mathbb{CP}^2$ of index 3 with $H_1(S^0;\mathbb{Z})=0$. Then for $p\in\textup{Sing}(S)$, the numbers $|\det(R_p)|=|H_1(L_p;\mathbb{Z})|$ are pairwise relatively prime (Lemma \ref{lem:canonical_divisor}(1)). Therefore, $S$ must have exactly one singularity of index 3, and the other singularities of $S$ must be rational double points.

\begin{table}[t]
    \centering
    \begin{tabular}{c|c}
      Type  &  Dual graph  \\ [-1em] &\\  
      \hline 
       $D_4(1)$ &   \begin{tikzpicture}[baseline=0]
\draw (0,0) node[circle, fill, inner sep=1.2pt, black]{};
\draw (1,0) node[circle, fill, inner sep=1.2pt, black]{};
\draw (2,0) node[circle, fill, inner sep=1.2pt, black]{};
\draw (1,-0.7) node[circle, fill, inner sep=1.2pt, black]{};
\draw (0,0) node[above]{$-2$};
\draw (1,0) node[above]{$-3$};
\draw (2,0) node[above]{$-2$};
\draw (1,-0.7) node[left]{$-2$};
\draw (0,0)--(2,0) (1,0)--(1,-0.7); 
\end{tikzpicture}  \\ [-0.8em] &\\  
      \hline
      $D_n(1)$ $(n\geq 5)$ & \begin{tikzpicture}[baseline=0]
\draw (-3,0) node[circle, fill, inner sep=1.2pt, black]{};
\draw (-2,0) node[circle, fill, inner sep=1.2pt, black]{};
\draw (-1,0) node[circle, fill, inner sep=1.2pt, black]{};
\draw (1,0) node[circle, fill, inner sep=1.2pt, black]{};
\draw (2,0) node[circle, fill, inner sep=1.2pt, black]{};
\draw (1,-0.8) node[circle, fill, inner sep=1.2pt, black]{};
\draw (-3,0) node[above]{$-2$};
\draw (-2,0) node[above]{$-3$};
\draw (-1,0) node[above]{$-2$};
\draw (1,0) node[above]{$-2$};
\draw (2,0) node[above]{$-2$};
\draw (1,-0.8) node[right]{$-2$};
\draw (0,0) node{$\cdots$};
\draw (-3,0)--(-1,0) (-1,0)--(-0.5,0) (0.5,0)--(1,0)  (1,0)--(2,0) (1,0)--(1,-0.8);
\draw [thick,decorate,decoration={mirror,brace,amplitude=3pt},xshift=0pt]
	(-1,-0.15) -- (0.9,-0.15) node [black,midway,xshift=0pt,yshift=-8pt] 
	{$n-4$};
\end{tikzpicture}  \\ [-0.8em] &\\
$D_n(2)$ $(n\geq 4)$ & \begin{tikzpicture}[baseline=0]
\draw (-2,0) node[circle, fill, inner sep=1.2pt, black]{};
\draw (-1,0) node[circle, fill, inner sep=1.2pt, black]{};
\draw (1,0) node[circle, fill, inner sep=1.2pt, black]{};
\draw (1,-0.8) node[circle, fill, inner sep=1.2pt, black]{};
\draw (2,0) node[circle, fill, inner sep=1.2pt, black]{};
\draw (-2,0) node[above]{$-4$};
\draw (-1,0) node[above]{$-2$};
\draw (1,0) node[above]{$-2$};
\draw (1,-0.8) node[right]{$-2$};
\draw (2,0) node[above]{$-2$};
\draw (0,0) node{$\cdots$};
\draw (-2,0)--(-1,0) (-1,0)--(-0.5,0) (0.5,0)--(1,0)  (1,0)--(2,0) (1,0)--(1,-0.8);
\draw [thick,decorate,decoration={mirror,brace,amplitude=3pt},xshift=0pt]
	(-1,-0.15) -- (0.9,-0.15) node [black,midway,xshift=0pt,yshift=-8pt] 
	{$n-3$};
\end{tikzpicture} 
    \end{tabular}
    \caption{Non-cyclic quotient singularities of index three.}   \label{tab:noncyclic_singularities_of_index_3}
\end{table}

Let $f\colon\tilde{S}\to S$ be the minimal resolution of $S$. As in Section \ref{sec:index_2_case}, we have \[
K_S^2=K_{\tilde{S}}^2-\sum_{p\in \textup{Sing}(S)} D_p^2.
\]
Also, $K_{\tilde{S}}^2=9-L$, where $L$ is the number of exceptional curves. From \cite[Lemma 3.6, Lemma 3.7]{Hwang-Keum-2011-2}, we have \[
D_p^2=\begin{cases}
    -\frac{1}{3}, & \textrm{if $p$ is of type $A_1(1)$}, \\
    -\frac{4}{3}, & \textrm{if $p$ is of type $A_n(1,1)$ $(n\geq 3)$}, \\
    -2, & \textrm{if $p$ is of type $A_n(1,2)$ $(n\geq 2)$}, \\
    -\frac{8}{3}, & \textrm{if $p$ is of type $A_1(2)$ or $A_n(2,2)$}, \\
    -\frac{2}{3}, & \textrm{if $p$ is of type $D_n(1)$ $(n\geq 5)$}, \\
    -\frac{4}{3}, & \textrm{if $p$ is of type $D_n(2)$ $(n\geq 5)$}.
\end{cases}
\]
Note that $D_p^2=0$ if $p$ is a rational double point.

\medskip

\textbf{\hypertarget{Case_1}{Case 1}:} Suppose that the unique singularity of index $3$ in $S$ is of type $A_1(1)$. In this case, $K_S^2=9+\frac{1}{3}-L>0$, and in particular, $L\leq 9$. Since the numbers $|\det(R_p)|$ are pairwise relatively prime, $S$ cannot have a singularity of type $A_2$, $A_5$, $A_8$, or $E_6$. Moreover, since $H_1(L(D_n);\mathbb{Z})=\mathbb{Z}_2\oplus\mathbb{Z}_2$ is not cyclic, $S$ cannot have a singularity of type $D_n$ with even $n$ (see Corollary \ref{cor:linking_form} and Example \ref{ex:d_invs_Dn}). It follows that the singularity type of $S$ is one of the 13 types listed in Table \ref{tab:D_index3_case1}. Only the following 6 types satisfy the condition of Lemma \ref{lem:canonical_divisor}(2): \[
A_1(1)E_8,\quad A_1(1)D_7,\quad A_1(1)A_6,\quad A_1(1)A_4A_1,\quad A_1(1)A_3,\quad A_1(1).\] 
Each of these $6$ cases can be realized by a $\mathbb{Q}$-homology $\mathbb{CP}^2$ whose smooth locus is simply-connected; see No.1, 18a, 18b, 66-68, and 97 in \cite[Appendix]{Zhang-1989}.

\begin{table}[t]
    \centering
    \begin{tabular}{c|c || c|c || c|c|| c|c}
      Type  &  $D$ &  Type  &  $D$ & Type  &  $D$ &  Type  &  $D$ \\ [-1em] &&&&&&&\\ 
      \hline &&&&&&&\\ [-1em]
       $\pmb{A_1(1)E_8}$ & $1$ & $A_1(1)A_7$ & $2^5$ & $\pmb{A_1(1)A_4A_1}$ & $2^2\cdot 5^2$ & $\pmb{A_1(1)}$ & $5^2$\\ [-1em] &&&&&\\
       $A_1(1)E_7$  & $2^3$ & $A_1(1)A_6A_1$ & $2^3\cdot 7$ & $A_1(1)A_4$ & $5\cdot 13$   \\[-1em] &&&&&\\
       $\pmb{A_1(1)D_7}$ & $2^4$ & $\pmb{A_1(1)A_6}$ & $7^2$ & $\pmb{A_1(1)A_3}$ & $2^6$ \\[-1em] &&&&&\\
       $A_1(1)D_5$ & $2^3\cdot 5$ & $A_1(1)A_4A_3$ & $2^4\cdot 5$ & $A_1(1)A_1$ & $2^2\cdot 11$
    \end{tabular}
    \caption{Computation of $D$ for types in \protect\hyperlink{Case_1}{Case 1}.}
    \label{tab:D_index3_case1}
\end{table}

\medskip

\textbf{\hypertarget{Case_2}{Case 2}:} Suppose that the unique singularity of index 3 in $S$ is of type $A_n(1,1)$ ($n\geq 3$). In this case, $K_S^2=9+\frac{4}{3}-L>0$, so $L\leq 10$. Noting that $S$ cannot have a singularity of type $D_n$ with $n$ even, and that the numbers $|\det(R_p)|$ are pairwise relatively prime, we conclude that the singularity type of $S$ is one of the 19 types listed in Table \ref{tab:D_index3_case2}. Among these, $D$ is a square number only for the two types $A_6(1,1)$ and $A_{10}(1,1)$.

\begin{table}[t]
    \centering
    \begin{tabular}{c|c || c|c || c|c|| c|c}
      Type  &  $D$ &  Type  &  $D$ & Type  &  $D$ &  Type  &  $D$ \\ [-1em] &&&&&&&\\ 
      \hline &&&&&&&\\ [-1em]
       $A_3(1,1)A_6$ & $2^4\cdot 7$ & $A_4(1,1)A_4$ & $5\cdot 7^2$ & $A_6(1,1)A_4$ & $5\cdot 13$ & $A_8(1,1)A_1$ & $2^3\cdot 19$\\ [-1em] &&&&&&\\
       $A_3(1,1)A_4$  & $2^3\cdot 5^2$ & $A_4(1,1)A_3$ & $2^3\cdot 5\cdot 7$ & $A_6(1,1)A_3$ & $2^4\cdot 13$  & 
       $A_8(1,1)$ & $7\cdot 19$\\[-1em] &&&&&&\\
       $A_3(1,1)$ & $2^3\cdot 11$ & $A_4(1,1)A_1$ & $2^5\cdot 7$ & $A_6(1,1)A_1$ & $2^2\cdot 5\cdot 13$ & $A_9(1,1)$ & $2^3\cdot 11$\\[-1em] &&&&&&\\
       $A_4(1,1)D_5$ & $2^4\cdot 7$ & $A_4(1,1)$ & $7\cdot 19$ & $\pmb{A_6(1,1)}$ & $13^2$ & $\pmb{A_{10}(1,1)}$ & $5^2$ \\[-1em] &&&&&\\
       $A_4(1,1)A_4A_1$ & $2^3\cdot 5\cdot 7$ & $A_5(1,1)$ & $2^5\cdot 5$ & $A_7(1,1)$ & $2^5\cdot 5$
    \end{tabular}
    \caption{Computation of $D$ for types in \protect\hyperlink{Case_2}{Case 2}.}
    \label{tab:D_index3_case2}
\end{table}

\medskip

\textbf{\hypertarget{Case_3}{Case 3}:} Suppose that the unique singularity of index 3 in $S$ is of type $A_n(1,2)$ ($n\geq 2$). In this case, $K_S^2=9+2-L=11-L>0$, so $L\leq 10$. The singularity type of $S$ is one of the $33$ types listed in Table \ref{tab:D_index3_case3}. Among these, $D$ is a square number only for the following $12$ types: 
\[
A_2(1,2)E_8, \quad A_2(1,2)E_7, \quad A_2(1,2)D_5, \quad A_2(1,2)A_7, \quad A_2(1,2)A_4, \quad A_2(1,2)A_1, \quad A_2(1,2),
\]  
\[
A_3(1,2), \quad A_4(1,2)A_1, \quad A_6(1,2), \quad A_9(1,2), \quad A_{10}(1,2).
\]

\begin{table}[t]
    \centering
    \scalebox{0.9}{
    \begin{tabular}{c|c || c|c || c|c|| c|c}
      Type  &  $D$ &  Type  &  $D$ & Type  &  $D$ &  Type  &  $D$ \\ [-1em] &&&&&&&\\ 
      \hline &&&&&&&\\ [-1em]
       $A_2(1,2)E_8$ & $3^2$ & $\pmb{A_2(1,2)A_4}$ & $3^2\cdot 5^2$ & $A_4(1,2)A_4A_1$ & $2^2\cdot 3^3\cdot 5$ & $A_6(1,2)$ & $3^2\cdot 5^2$\\ [-1em] &&&&&&\\
       $\pmb{A_2(1,2)E_7}$  & $2^2\cdot 3^2$ & $A_2(1,2)A_3$ & $2^3\cdot 3^3$ & $A_4(1,2)A_4$ & $3^4\cdot 5$  & 
       $A_7(1,2)$ & $2^3\cdot 3^3$\\[-1em] &&&&&&\\
       $A_2(1,2)D_7$ & $2^3\cdot 3^2$ & $\pmb{A_2(1,2)A_1}$ & $2^4\cdot 3^2$ & $A_4(1,2)A_3$ & $2^4\cdot 3^3$ & $A_8(1,2)A_1$ & $2^2\cdot 3^2\cdot 7$\\[-1em] &&&&&&\\
       $A_2(1,2)D_5$ & $2^4\cdot 3^2$ & $A_2(1,2)$ & $3^4$ & $\pmb{A_4(1,2)A_1}$ & $2^2\cdot 3^4$ & $A_8(1,2)$ & $3^3\cdot 7$ \\[-1em] &&&&&&\\
       $A_2(1,2)A_7$ & $2^4\cdot 3^2$ & $A_3(1,2)A_6$ & $2^2\cdot 3^2\cdot 7$ & $A_4(1,2)$ & $3^3\cdot 7$ & $A_9(1,2)$ & $2^4\cdot 3^2$ \\[-1em] &&&&&&\\
       $A_2(1,2)A_6A_1$ & $2^2\cdot 3^2\cdot 7$ & $A_3(1,2)A_4$ & $2^3\cdot 3^2\cdot 5$ & $A_5(1,2)A_4$ & $2^3\cdot 3^2\cdot 5$ & $A_{10}(1,2) $& $3^4$\\[-1em] &&&&&&\\
       $A_2(1,2)A_6$ & $3^3\cdot 7$ & $A_3(1,2)$ & $2^4\cdot 3^2$ & $A_5(1,2)$ & $2^3\cdot 3^3$  \\[-1em] &&&&&\\
       $A_2(1,2)A_4A_3$ & $2^3\cdot 3^2\cdot 5$ & $A_4(1,2)D_5$ & $2^3\cdot 3^3$ & $A_6(1,2)A_3$ & $2^3\cdot 3^2\cdot 5$  \\[-1em] &&&&&\\
       $A_2(1,2)A_4A_1$ & $2^3\cdot 3^2\cdot 5$ & $A_4(1,2)A_6$ & $3^3\cdot 7$ & $A_6(1,2)A_1$ & $2^3\cdot 3^2\cdot 5$ 
    \end{tabular}}
    \caption{Computation of %$D=K_S^2 \cdot \displaystyle\prod_{p\in \textup{Sing}(S)}|\det(R_p)|$ 
    $D$ for types in \protect\hyperlink{Case_3}{Case 3}.}
    \label{tab:D_index3_case3}
\end{table}

\begin{lemma}\label{lem:index3_Donaldson} The types $A_2(1,2)A_7$, $A_2(1,2)$, $A_3(1,2)$, $A_6(1,2)$, $A_9(1,2)$, and $A_{10}(1,2)$ do not occur as the singularity type of $S$.     
\end{lemma}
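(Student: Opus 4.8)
The plan is to eliminate all six types by means of the Donaldson-type obstruction in Corollary~\ref{cor:orthogonal_complement}. The first step is to identify the bounding plumbings. By Table~\ref{tab:singularities_of_index_3} a singularity of type $A_n(1,2)$ has link $L(9n-9,6n-7)$, so its orientation reversal is $-L(9n-9,6n-7)=L(9n-9,3n-2)$, and a direct Hirzebruch--Jung computation gives
\[
\frac{9n-9}{3n-2}=[3,n,2,2].
\]
Hence $-L(9n-9,6n-7)$ bounds the negative definite linear plumbing with weights $-3,-n,-2,-2$ and $b_2=4$. For the five single-singularity types ($n=2,3,6,9,10$) this is the only plumbing involved, so one embeds into $-\mathbb{Z}^5$; for $A_2(1,2)A_7$ I would additionally use that $-L(8,7)=L(8,1)$ bounds the single vertex of weight $-8$ (Example~\ref{ex:link_of_An_Kn}), giving an embedding problem in $-\mathbb{Z}^6$. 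Writing $\Lambda:=\bigoplus_p Q_{X_p}$ and $P:=\prod_p|H_1(L_p;\mathbb{Z})|$, Corollary~\ref{cor:orthogonal_complement} asserts that if such a type is realized then $\Lambda$ admits an embedding $\iota$ into the relevant $-\mathbb{Z}^N$ whose orthogonal complement is generated by a vector $w$ with $w^2=-P$.

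The second step is to recast the condition $w^2=-P$ as a primitivity statement, which makes it easier to refute. Put $M:=\iota(\Lambda)$ and let $\widehat M$ be its saturation in $-\mathbb{Z}^N$. Since $-\mathbb{Z}^N$ is unimodular and $w$ is primitive, the orthogonal complement of $M$ equals that of $\widehat M$, so $|w^2|=|\det\widehat M|$, while $P=|\det\Lambda|=|\det M|=[\widehat M:M]^2\,|\det\widehat M|$. Therefore $|w^2|=P$ holds precisely when $M$ is saturated. Thus it suffices to prove that $\Lambda$ has no primitive embedding into $-\mathbb{Z}^N$; equivalently, that every embedding has index $[\widehat M:M]=k>1$, so that $|w^2|=P/k^2<P$.

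The third step is the finite enumeration of embeddings up to the automorphism group of $-\mathbb{Z}^N$ (the signed permutations of the standard basis). I would first normalize the trailing $-2$-chain, say $v_4=e_4-e_5$ and $v_3=e_3-e_4$ (each norm $-2$ vector is $\pm e_i\pm e_j$, and the intersection conditions pin down the overlap), then determine all admissible weight-$(-n)$ vectors $v_2$ and weight-$(-3)$ vectors $v_1$ from the constraints $v_2\cdot v_3=1$, $v_2\cdot v_4=0$, $v_1\cdot v_2=1$, $v_1\cdot v_3=v_1\cdot v_4=0$ together with the prescribed norms; for $A_2(1,2)A_7$ one also places the $-8$ vector orthogonally to the chain. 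For each resulting configuration I would solve the linear system $w\cdot v_i=0$ and read off $w^2$, recording the embeddings in figures as in the index-$2$ analysis (cf.\ Lemma~\ref{lem:index2_complement}). The expected outcome is that in every branch $M$ is non-saturated and $|w^2|=P/k^2$ with $k>1$; for instance $w^2=-1$ for $A_2(1,2)$, $w^2=-2$ for $A_3(1,2)$, and $w^2=-5$ for $A_6(1,2)$, none of which equals $-P$ for the respective values $P\in\{9,18,45,72,81\}$. This is the desired contradiction.

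The hard part will be ensuring that the enumeration is exhaustive, and the difficulty is concentrated at the vertices of large weight: for $n\in\{6,9,10\}$ the norm $-n$ vector $v_2$ may have entries of absolute value $\geq 2$ with several possible supports and sign patterns (e.g.\ of type $(2,2,1,0,0)$ or $(3,1,0,0,0)$), and the same holds for the $-8$ vector in $A_2(1,2)A_7$. One must verify that none of these, paired with a compatible $-3$ vector, yields a saturated image. I expect this to be bookkeeping-heavy but routine, and it is best organized by the support of $v_2$ (and of the $-8$ vector) relative to the normalized $-2$-chain, so that the forbidden possibility $|w^2|=P$ is excluded case by case.
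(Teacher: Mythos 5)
Your proposal follows essentially the same route as the paper's proof: both identify the reversed link of $A_n(1,2)$ as $L(9n-9,3n-2)$ with Hirzebruch--Jung expansion $[3,n,2,2]$, bound it by the corresponding negative definite linear plumbing (plus the single $(-8)$-vertex for the $A_7$ summand), and refute each type by showing that no embedding of the resulting lattice into $-\mathbb{Z}^5$ (resp.\ $-\mathbb{Z}^6$) has orthogonal complement generated by a vector of square $-P$, exactly as required by Corollary~\ref{cor:orthogonal_complement}. Your one genuine addition, recasting the condition $|w^2|=P$ as saturation of the embedded image $M$, is correct lattice theory: since $w$ is primitive and $-\mathbb{Z}^N$ is unimodular, $|w^2|=|\det\widehat{M}|=P/[\widehat{M}:M]^2$, so the obstruction is precisely that no embedding is primitive. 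This is a clean way to organize the check; the paper instead computes the complement generator explicitly for each embedding (Figures~\ref{fig:embedding_A_2(1,2)A_7}--\ref{fig:embedding_A_10(1,2)}), which amounts to the same verification.

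The shortfall is that the enumeration itself, which constitutes the entire content of the paper's proof, is not carried out. You state expected outcomes that do agree with the paper ($w^2=-1$ for $A_2(1,2)$ and $A_{10}(1,2)$, $-2$ for $A_3(1,2)$ and $A_9(1,2)$, $-5$ for $A_6(1,2)$, and $-2$ or $-18$ for the two embeddings in the $A_2(1,2)A_7$ case), but you defer the proof that the list of embeddings is exhaustive, flagging it as ``bookkeeping-heavy but routine.'' Your normalization strategy (fix the trailing $(-2)$-chain, then classify admissible $(-n)$- and $(-3)$-vectors by support and sign pattern, placing the $(-8)$-vector orthogonally in the $A_2(1,2)A_7$ case) is the standard Lisca-type argument and does terminate in all six cases, so the plan is sound and nothing in it would fail; but until that finite case analysis is written down, the proposal is a correct blueprint rather than a complete proof.
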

\begin{proof} We apply the lattice embedding condition from Corollary \ref{cor:orthogonal_complement} for each case (cf. Lemma \ref{lem:index2_complement}). Note that $L(9n-9,6n-7)$ is the link of a singularity of type $A_n(1,2)$ $(n\geq 2)$, and its orientation reversal $-L(9n-9,6n-7)=L(9n-9,3n-2)$ bounds the negative definite 4-manifold $X(9n-9,3n-2)$. The Hirzebruch-Jung continued fraction of $\frac{9n-9}{3n-2}$ is given by $[3,n,2,2]$.

(1) Type $A_2(1,2)A_7$: There are exactly two distinct embeddings $Q_{X(9,4)}\oplus Q_{X(8,1)} \hookrightarrow -\mathbb{Z}^6$ up to an automorphism of $-\mathbb{Z}^6$, as shown in Figure \ref{fig:embedding_A_2(1,2)A_7}. For the first embedding, the orthogonal complement is generated by $e_5-e_6$, which has square $-2\neq -72$. For the second embedding, the orthogonal complement is generated by $e_1+e_2+2e_3+2e_4+2e_5-2e_6$, which has square $-18\neq -72$.

 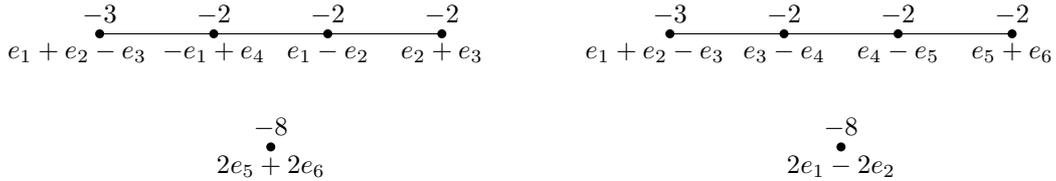
\begin{figure}[!th]
    \centering
\begin{tikzpicture}[scale=1]
\draw (-3,0) node[circle, fill, inner sep=1.2pt, black]{};
\draw (-1.5,0) node[circle, fill, inner sep=1.2pt, black]{};
\draw (0,0) node[circle, fill, inner sep=1.2pt, black]{};
\draw (1.5,0) node[circle, fill, inner sep=1.2pt, black]{};
\draw (-0.75,-1.5) node[circle, fill, inner sep=1.2pt, black]{};

\draw (-3,0) node[above]{$-3$};
\draw (-1.5,0) node[above]{$-2$};
\draw (0,0) node[above]{$-2$};
\draw (1.5,0) node[above]{$-2$};
\draw (-0.75,-1.5) node[above]{$-8$};
\draw (-3,0)--(1.5,0);

\draw (-3.3,0) node[below]{$e_1+e_2-e_3$};
\draw (-1.5,0) node[below]{$-e_1+e_4$};
\draw (0,0) node[below]{$e_1-e_2$};
\draw (1.5,0) node[below]{$e_2+e_3$};
\draw (-0.75,-1.5) node[below]{$2e_5+2e_6$};

\draw (4.5,0) node[circle, fill, inner sep=1.2pt, black]{};
\draw (6,0) node[circle, fill, inner sep=1.2pt, black]{};
\draw (7.5,0) node[circle, fill, inner sep=1.2pt, black]{};
\draw (9,0) node[circle, fill, inner sep=1.2pt, black]{};
\draw (6.75,-1.5) node[circle, fill, inner sep=1.2pt, black]{};

\draw (4.5,0) node[above]{$-3$};
\draw (6,0) node[above]{$-2$};
\draw (7.5,0) node[above]{$-2$};
\draw (9,0) node[above]{$-2$};
\draw (4.5,0)--(9,0);
\draw (6.75,-1.5) node[above]{$-8$};

\draw (4.3,0) node[below]{$e_1+e_2-e_3$};
\draw (6,0) node[below]{$e_3-e_4$};
\draw (7.5,0) node[below]{$e_4-e_5$};
\draw (9,0) node[below]{$e_5+e_6$};
\draw (6.75,-1.5) node[below]{$2e_1-2e_2$};
\end{tikzpicture}
\caption{Two embeddings of $Q_{X(9,4)}\oplus Q_{X(8,1)}$ into $-\mathbb{Z}^6$.}
\label{fig:embedding_A_2(1,2)A_7}
\end{figure}

(2) Type $A_2(1,2)$: Up to an automorphism of $-\mathbb{Z}^5$, there is a unique embedding $Q_{X(9,4)}\hookrightarrow -\mathbb{Z}^5$, as shown in Figure \ref{fig:embedding_A_2(1,2)}. The orthogonal complement is generated by $e_5$, which has square $-1\neq -9$.

\begin{figure}[!th]
    \centering
\begin{tikzpicture}[scale=1]
\draw (-3,0) node[circle, fill, inner sep=1.2pt, black]{};
\draw (-1.5,0) node[circle, fill, inner sep=1.2pt, black]{};
\draw (0,0) node[circle, fill, inner sep=1.2pt, black]{};
\draw (1.5,0) node[circle, fill, inner sep=1.2pt, black]{};

\draw (-3,0) node[above]{$-3$};
\draw (-1.5,0) node[above]{$-2$};
\draw (0,0) node[above]{$-2$};
\draw (1.5,0) node[above]{$-2$};
\draw (-3,0)--(1.5,0);

\draw (-3.3,0) node[below]{$e_1+e_2-e_3$};
\draw (-1.5,0) node[below]{$-e_1+e_4$};
\draw (0,0) node[below]{$e_1-e_2$};
\draw (1.5,0) node[below]{$e_2+e_3$};
\end{tikzpicture}
\caption{An embedding of $Q_{X(9,4)}$ into $-\mathbb{Z}^5$.}
\label{fig:embedding_A_2(1,2)}
\end{figure}
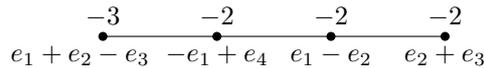

(3) Type $A_3(1,2)$: Up to an automorphism of $-\mathbb{Z}^5$, there is a unique embedding $Q_{X(18,7)}\hookrightarrow -\mathbb{Z}^5$, as shown in Figure \ref{fig:embedding_A_3(1,2)}. The orthogonal complement is generated by $e_4-e_5$, which has square $-2\neq -18$.

\begin{figure}[!th]
    \centering
\begin{tikzpicture}[scale=1]
\draw (-3,0) node[circle, fill, inner sep=1.2pt, black]{};
\draw (-1.5,0) node[circle, fill, inner sep=1.2pt, black]{};
\draw (0,0) node[circle, fill, inner sep=1.2pt, black]{};
\draw (1.5,0) node[circle, fill, inner sep=1.2pt, black]{};

\draw (-3,0) node[above]{$-3$};
\draw (-1.5,0) node[above]{$-3$};
\draw (0,0) node[above]{$-2$};
\draw (1.5,0) node[above]{$-2$};
\draw (-3,0)--(1.5,0);

\draw (-3.7,0) node[below]{$e_1+e_2-e_3$};
\draw (-1.5,0) node[below]{$-e_1+e_4+e_5$};
\draw (0.4,0) node[below]{$e_1-e_2$};
\draw (1.8,0) node[below]{$e_2+e_3$};
\end{tikzpicture}
\caption{An embedding of $Q_{X(18,7)}$ into $-\mathbb{Z}^5$.}
\label{fig:embedding_A_3(1,2)}
\end{figure}

(4) Type $A_6(1,2)$: Up to an automorphism of $-\mathbb{Z}^5$, there is a unique embedding $Q_{X(45,16)}\hookrightarrow -\mathbb{Z}^5$, as shown in Figure \ref{fig:embedding_A_6(1,2)}. The orthogonal complement is generated by $e_4-2e_5$, which has square $-5\neq -45$.

\begin{figure}[!th]
    \centering
\begin{tikzpicture}[scale=1]
\draw (-4,0) node[circle, fill, inner sep=1.2pt, black]{};
\draw (-2,0) node[circle, fill, inner sep=1.2pt, black]{};
\draw (0,0) node[circle, fill, inner sep=1.2pt, black]{};
\draw (2,0) node[circle, fill, inner sep=1.2pt, black]{};

\draw (-4,0) node[above]{$-3$};
\draw (-2,0) node[above]{$-6$};
\draw (0,0) node[above]{$-2$};
\draw (2,0) node[above]{$-2$};
\draw (-4,0)--(2,0);

\draw (-4.5,0) node[below]{$e_1+e_2-e_3$};
\draw (-2,0) node[below]{$-e_1+2e_4+e_5$};
\draw (0,0) node[below]{$e_1-e_2$};
\draw (2,0) node[below]{$e_2+e_3$};
\end{tikzpicture}
\caption{An embedding of $Q_{X(45,16)}$ into $-\mathbb{Z}^5$.}
\label{fig:embedding_A_6(1,2)}
\end{figure}

(5) Type $A_9(1,2)$: Up to an automorphism of $-\mathbb{Z}^5$, there is a unique embedding $Q_{X(72,25)}\hookrightarrow -\mathbb{Z}^5$ as shown in Figure \ref{fig:embedding_A_9(1,2)}. The orthogonal complement is generated by $e_4-e_5$, which has square $-2\neq -72$.

\begin{figure}[!th]
    \centering
\begin{tikzpicture}[scale=1]
\draw (-4,0) node[circle, fill, inner sep=1.2pt, black]{};
\draw (-2,0) node[circle, fill, inner sep=1.2pt, black]{};
\draw (0,0) node[circle, fill, inner sep=1.2pt, black]{};
\draw (2,0) node[circle, fill, inner sep=1.2pt, black]{};

\draw (-4,0) node[above]{$-3$};
\draw (-2,0) node[above]{$-9$};
\draw (0,0) node[above]{$-2$};
\draw (2,0) node[above]{$-2$};
\draw (-4,0)--(2,0);

\draw (-4.5,0) node[below]{$e_1+e_2-e_3$};
\draw (-2,0) node[below]{$-e_1+2e_4+2e_5$};
\draw (0,0) node[below]{$e_1-e_2$};
\draw (2,0) node[below]{$e_2+e_3$};
\end{tikzpicture}
\caption{An embedding of $Q_{X(72,25)}$ into $-\mathbb{Z}^5$.}
\label{fig:embedding_A_9(1,2)}
\end{figure}

(6) Type $A_{10}(1,2)$: Up to an automorphism of $-\mathbb{Z}^5$, there is a unique embedding $Q_{X(81,28)}\hookrightarrow -\mathbb{Z}^5$, as shown in Figure \ref{fig:embedding_A_10(1,2)}. The orthogonal complement is generated by $e_5$, which has square $-1\neq -81$. \end{proof}

\begin{figure}[!th]
    \centering
\begin{tikzpicture}[scale=1]
\draw (-4,0) node[circle, fill, inner sep=1.2pt, black]{};
\draw (-2,0) node[circle, fill, inner sep=1.2pt, black]{};
\draw (0,0) node[circle, fill, inner sep=1.2pt, black]{};
\draw (2,0) node[circle, fill, inner sep=1.2pt, black]{};

\draw (-4,0) node[above]{$-3$};
\draw (-2,0) node[above]{$-10$};
\draw (0,0) node[above]{$-2$};
\draw (2,0) node[above]{$-2$};
\draw (-4,0)--(2,0);

\draw (-4,0) node[below]{$e_1+e_2-e_3$};
\draw (-2,0) node[below]{$-e_1+3e_4$};
\draw (0,0) node[below]{$e_1-e_2$};
\draw (2,0) node[below]{$e_2+e_3$};
\end{tikzpicture}
\caption{An embedding of $Q_{X(81,28)}$ into $-\mathbb{Z}^5$.}
\label{fig:embedding_A_10(1,2)}
\end{figure}

The following two types are obstructed by the topological linking form condition from Lemma \ref{lem:linking_form}.
\begin{lemma} The types $A_2(1,2)E_8$ and $A_2(1,2)D_5$ do not occur as the singularity type of $S$.     
\end{lemma}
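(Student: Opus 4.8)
The plan is to adapt the linking-form obstruction used in Lemma~\ref{lem:index2_linkingform}. Assuming $S$ has one of these two singularity types with $H_1(S^0;\mathbb{Z})=0$, I would form the $4$-manifold $Z^0$ of Section~\ref{subsec:top_and_smooth_obstructions}, whose boundary is $\partial Z^0=\#_{p\in\textup{Sing}(S)}(-L_p)$. By Corollary~\ref{cor:linking_form} its intersection form is $(n)$ with $n=\prod_{p}|H_1(L_p;\mathbb{Z})|$, so Lemma~\ref{lem:linking_form}(1) forces the linking form on $H_1(\partial Z^0;\mathbb{Z})\cong\mathbb{Z}_n$ to be represented by $(-1/n)$. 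The strategy is to compute the linking form of $\partial Z^0$ a second way---directly from surgery presentations of the summands via Lemma~\ref{lem:linking_form}(2)---and to show that the two representatives are non-isomorphic over $\mathbb{Z}_n$.

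For the individual summands I would record the following. The link of $A_2(1,2)$ is $L(9,5)$ (Table~\ref{tab:singularities_of_index_3}), so its orientation reversal $-L(9,5)=L(9,4)$ carries the linking form $(4/9)$, following the convention (used for $L(4,3)$ in Lemma~\ref{lem:index2_linkingform}) that $-p/q$-surgery on the unknot yields $(q/p)$. The $E_8$ link is the Poincar\'e sphere, which has $H_1=0$ and hence contributes nothing. For $D_5$ I would invoke Example~\ref{ex:d_invs_En}: since $L(D_5)$ is $(-4)$-surgery on the left-handed trefoil, one has $H_1(L(D_5);\mathbb{Z})=\mathbb{Z}_4$, and $-L(D_5)$ is $(+4)$-surgery on the right-handed trefoil, giving linking form $(-1/4)$ by Lemma~\ref{lem:linking_form}(2).

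It then remains to assemble and compare the forms. For $A_2(1,2)E_8$ we have $n=9$, and since the $E_8$ summand is a homology sphere the linking form of $\partial Z^0$ is simply $(4/9)$; this is isomorphic to $(-1/9)$ exactly when $-4\equiv 5$ is a square modulo $9$, which fails because the nonzero squares modulo $9$ are $\{1,4,7\}$. For $A_2(1,2)D_5$ we have $n=36$, and under the isomorphism $\mathbb{Z}_9\oplus\mathbb{Z}_4\cong\mathbb{Z}_{36}$ sending $1\mapsto(1,1)$ the orthogonal sum $(4/9)\oplus(-1/4)$ evaluates on a generator as $4/9-1/4=7/36$; thus the linking form of $\partial Z^0$ is $(7/36)$, whose isomorphism with $(-1/36)$ would require $5$ to be a square modulo $36$, which again fails already modulo $9$. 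In each case the two representatives of the linking form disagree, contradicting Lemma~\ref{lem:linking_form}(1).

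I expect the only delicate points to be the bookkeeping of orientation and sign conventions---so that $-L(9,5)$ contributes $(4/9)$ and $-L(D_5)$ contributes $(-1/4)$ consistently with the signs in Lemma~\ref{lem:index2_linkingform}---and the correct transport of the orthogonal-sum form across the Chinese remainder isomorphism. Once these are pinned down, both obstructions collapse to the single arithmetic fact that $5$ (equivalently $2$) is not a quadratic residue modulo $9$, which is the heart of the argument.
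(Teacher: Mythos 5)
Your proposal is correct and follows essentially the same route as the paper's own proof: the same manifold $Z^0$ with intersection form $(n)$ giving the linking form $\left(-\frac{1}{n}\right)$ via Lemma \ref{lem:linking_form}(1), the same surgery descriptions giving $\left(\frac{4}{9}\right)$ for $L(9,4)$ and $\left(-\frac{1}{4}\right)$ for $-L(D_5)$, the same Chinese remainder assembly yielding $\left(\frac{7}{36}\right)$, and the same quadratic-residue obstruction (the paper phrases it as $29$, equivalently your $5$, failing to be a square modulo $36$, and $5$ failing modulo $9$ in the $E_8$ case). Your sign and orientation bookkeeping matches the paper's conventions exactly, so there is nothing to correct.
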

\begin{proof} We use the argument from the proof of Lemma \ref{lem:index2_linkingform}. Suppose $S$ is a $\mathbb{Q}$-homology $\mathbb{CP}^2$ with singularity type $A_2(1,2)D_5$ such that $H_1(S^0;\mathbb{Z})=0$. The associated smooth 4-manifold $Z^0$, constructed in Section \ref{subsec:top_and_smooth_obstructions}, has boundary $\partial Z^0=L(9,4)\# -L(D_5)$. The intersection form of $Z^0$ is represented by $(36)$ (Corollary \ref{cor:linking_form}), so the linking form on $H_1(\partial Z^0;\mathbb{Z})=\mathbb{Z}_{36}$ is represented by $\displaystyle\left(-\frac{1}{36}\right)$.

On the other hand, recall that $L(9,4)$ is obtained from $S^3$ by $(-{9}/{4})$-surgery along the unknot, and that $-L(D_5)$ is obtained from $S^3$ by $+4$-surgery along the right-handed trefoil knot (see Example \ref{ex:d_invs_En}). By Lemma \ref{lem:linking_form}(2), the linking forms on $H_1(L(9,4);\mathbb{Z})=\mathbb{Z}_9$ and $H_1(-L(D_5);\mathbb{Z})=\mathbb{Z}_4$ are represented by $\displaystyle\left(\frac{4}{9}\right)$ and $\displaystyle\left(-\frac{1}{4}\right)$, respectively. Via the isomorphism $H_1(L(9,4)\# -L(D_5);\mathbb{Z})=\mathbb{Z}_9\oplus \mathbb{Z}_4 \cong \mathbb{Z}_{36}$ given by $(1,1)\mapsto 1$, it follows that the linking form on $H_1(L(9,4)\# -L(D_5);\mathbb{Z})=\mathbb{Z}_{36}$ is represented by $\displaystyle\left(\frac{7}{36}\right)\cong \left(-\displaystyle\frac{29}{36}\right)$. However, since $29$ is not a quadratic residue modulo $36$, the form $\left(-\displaystyle\frac{29}{36}\right)$ is not isomorphic to $\left(-\dfrac{1}{36}\right)$, which leads to a contradiction.

The same argument can be used to show that the type $A_2(1,2)E_8$ does not occur. Note that the linking form on $L(E_8)$ is trivial, as $H_1(L(E_8);\mathbb{Z})=0$, and that $5$ is not a quadratic residue modulo $9$.
\end{proof}

Hence, the remaining types in Case $3$ are the following four:
\[A_2(1,2)E_7, \quad A_2(1,2)A_4, \quad A_2(1,2)A_1, \quad A_4(1,2)A_1. \]
\medskip

\textbf{\hypertarget{Case_4}{Case 4}:} Suppose that the unique singularity of index $3$ in $S$ is of type $A_1(2)$ or $A_n(2,2)$ ($n\geq 2$). In this case, $K_S^2=9+{8}/{3}-L>0$, so $L\leq 11$. The possible singularity type of $S$ is one of the $58$ types listed in Table \ref{tab:D_index3_case4}, and $D$ is a square number only for the following $10$ types: 
\[
A_1(2)E_8, \quad A_1(2)A_6, \quad A_1(2), \quad A_2(2,2)E_8, \quad A_2(2,2)A_9, \quad A_2(2,2)A_3,
\]  
\[
A_3(2,2)E_8, \quad A_5(2,2)A_6, \quad A_6(2,2), \quad A_{11}(2,2).
\]

\begin{table}[t]
    \centering
    \scalebox{0.9}{
    \begin{tabular}{c|c || c|c || c|c|| c|c}
      Type  &  $D$ &  Type  &  $D$ & Type  &  $D$ &  Type  &  $D$ \\ [-1em] &&&&&&&\\ 
      \hline &&&&&&&\\ [-1em]
       $A_1(2)E_8$ & $2^4$ & $A_2(2,2)A_6A_3$ & $2^3\cdot 5\cdot 7$ & $A_4(2,2)D_5$ & $2^5\cdot 11$ & $A_6(2,2)A_4$ & $5^2\cdot 17$
\\ [-1em] &&&&&&\\
       $A_1(2)A_{10}$  & $2^2\cdot 11$ & $A_2(2,2)A_6A_1$ & $2^4\cdot 5\cdot 7$ & $A_4(2,2)A_7$ & $2^4\cdot 11$  &        $A_6(2,2)A_3$ & $2^5\cdot 17$
\\[-1em] &&&&&&\\
       $A_1(2)A_6A_4$ & $2^2\cdot 5\cdot 7$ & $A_2(2,2)A_6$ & $5\cdot 7\cdot 11$ & $A_4(2,2)A_6A_1$ & $2^2\cdot 7\cdot 11$ & $A_6(2,2)A_1$ & $2^2\cdot 7\cdot 17$
\\[-1em] &&&&&&\\
       $\pmb{A_1(2)A_6}$ & $2^2\cdot 7^2$ & $A_2(2,2)A_4A_3$ & $2^5\cdot 5^2$ & $A_4(2,2)A_6$ & $5\cdot 7\cdot 11$ & $\pmb{A_6(2,2)}$ & $17^2$ 
\\[-1em] &&&&&&\\
       $A_1(2)A_4$ & $2^3\cdot 5^2$ & $A_2(2,2)A_4A_1$ & $2^2\cdot 5^2\cdot 7$ & $A_4(2,2)A_4A_3$ & $2^3\cdot 5\cdot 11$ & $A_7(2,2)A_4$ & $2^3\cdot 5^2$ 
\\[-1em] &&&&&&\\
       $\pmb{A_1(2)}$ & $2^6$ & $A_2(2,2)A_4$ & $5^2\cdot 17$ & $A_4(2,2)A_4A_1$ & $2^4\cdot 5\cdot 11$ & $A_7(2,2) $& $2^3\cdot 5\cdot 7$
\\[-1em] &&&&&&\\
       $A_2(2,2)E_8A_1$ & $2^2\cdot 5$ & $\pmb{A_2(2,2)A_3}$ & $2^4\cdot 5^2$ & $A_4(2,2)A_4$ & $5\cdot 11^2$  & $A_8(2,2)A_3$ & $2^3\cdot 23$
\\[-1em] &&&&&&\\
       $\pmb{A_2(2,2)E_8}$ & $5^2$ & $A_2(2,2)A_1$ & $2^2\cdot 5\cdot 13$ & $A_4(2,2)A_3$ & $2^3\cdot 7\cdot 11$  & $A_8(2,2)A_1$ & $2^4\cdot 23$
\\[-1em] &&&&&&\\
       $A_2(2,2)E_7$ & $2^4\cdot 5$ & $A_2(2,2)$ & $5\cdot 29$ & $A_4(2,2)A_1$ & $2^3\cdot 5\cdot 11$ & $A_8(2,2)$ & $11\cdot 23$
\\[-1em] &&&&&&\\
       $A_2(2,2)D_9$ & $2^3\cdot 5$ & $A_3(2,2)E_8$ & $2^4$ & $A_4(2,2)$ & $11\cdot 23$ & $A_9(2,2)$ & $2^4\cdot 13$
\\[-1em] &&&&&&\\
       $A_2(2,2)D_7$ & $2^5\cdot 5$ & $A_3(2,2)A_6$ & $2^6\cdot 7$ & $A_5(2,2)A_6$ & $2^2\cdot 7^2$ & $A_{10}(2,2)A_1$ & $2^2\cdot 29$
\\[-1em] &&&&&&\\
       $A_2(2,2)D_5A_4$ & $2^3\cdot 5^2$ & $A_3(2,2)A_4$ & $2^4\cdot 5\cdot 7$ & $A_5(2,2)A_4$ & $2^4\cdot 5\cdot 7$ & $A_{10}(2,2)$ & $5\cdot 29$
\\[-1em] &&&&&&\\
       $A_2(2,2)D_5$ & $2^3\cdot 5\cdot 7$ & $A_3(2,2)$ & $2^4\cdot 13$ & $A_5(2,2)$ & $2^3\cdot 5\cdot 7$ & $A_{11}(2,2)$ & $2^6$
\\[-1em] &&&&&\\
       $A_2(2,2)A_9$ & $2^2\cdot 5^2$ & $A_4(2,2)E_7$ & $2^2\cdot 11$ & $A_6(2,2)D_5$ & $2^3\cdot 17$ 
\\[-1em] &&&&&\\
       $A_2(2,2)A_7$ & $2^6\cdot 5$ & $A_4(2,2)D_7$ & $2^3\cdot 11$ & $A_6(2,2)A_4A_1$ & $2^2\cdot 5\cdot 17$ 
    \end{tabular}}
    \caption{Computation of %$D=K_S^2 \cdot \displaystyle\prod_{p\in \textup{Sing}(S)}|\det(R_p)|$ 
    $D$ for types in \protect\hyperlink{Case_4}{Case 4}.}
    \label{tab:D_index3_case4}
\end{table}

\begin{lemma} The types $A_2(2,2)A_9$, $A_5(2,2)A_6$, and $A_{11}(2,2)$ do not occur as the singularity type of $S$.     
\end{lemma}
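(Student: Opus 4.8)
The plan is to dispose of the three types by two different obstructions: the reducible types $A_2(2,2)A_9$ and $A_5(2,2)A_6$ fall to the cyclicity constraint on $H_1(\partial Z^0)$, while the single type $A_{11}(2,2)$ requires a Heegaard Floer $d$-invariant computation.

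First I would record the links and their first homology. By Table \ref{tab:singularities_of_index_3}, the link of $A_n(2,2)$ is $L(9n-3,3n-2)$, so the index-$3$ singularities $A_2(2,2)$, $A_5(2,2)$, $A_{11}(2,2)$ have $|\det(R_p)| = 15$, $42$, $96$ respectively, while the rational double points $A_9$ and $A_6$ have $|\det(R_p)| = 10$ and $7$ (Example \ref{ex:link_of_An_Kn}). For $A_2(2,2)A_9$ one has $\gcd(15,10)=5$, and for $A_5(2,2)A_6$ one has $\gcd(42,7)=7$, so in each case the determinants of the two singularities are not coprime. By Lemma \ref{lem:canonical_divisor}(1) (equivalently, by the cyclicity of $H_1(\partial Z^0)$ in Corollary \ref{cor:linking_form}), the numbers $|\det(R_p)|$ must be pairwise relatively prime whenever $H_1(S^0;\mathbb{Z})=0$; since they are not, neither type can occur. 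It is worth noting that these two types cannot be excluded by the spin $d$-invariant criterion of Corollary \ref{cor:spin_d_2}: a direct check shows the available spin $d$-invariants can be made to sum to $\tfrac14$, so the non-coprimality of the determinants is the essential obstruction here.

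For $A_{11}(2,2)$ the group $H_1 = \mathbb{Z}_{96}$ is cyclic, so the previous argument does not apply. Since $\prod_p |H_1(L_p;\mathbb{Z})| = 96$ is even, Corollary \ref{cor:spin_d_2} demands a spin structure $\mathfrak{s}$ on the link $L(96,31)$ with $d(L(96,31),\mathfrak{s}) = \tfrac14$. I would locate the two spin structures as the integers $i=15$ and $i=63$ among $(q-1)/2$ and $(p+q-1)/2$, and then evaluate the recursion (\ref{eq:d-invariant}) along the chain $L(96,31)\to L(31,3)\to L(3,1)\to S^3$ (using $96\equiv 3 \bmod 31$ and $31\equiv 1 \bmod 3$). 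This yields spin $d$-invariants $-\tfrac54$ and $\tfrac{11}{4}$, neither of which equals $\tfrac14$, producing the desired contradiction.

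The only genuinely computational step, and hence the main obstacle, is the $d$-invariant evaluation for $L(96,31)$: one must track the Hirzebruch--Jung reductions modulo $q$ at each stage and correctly identify which residues carry the spin structures. The reducible cases, by contrast, are immediate once the common prime factor in the determinants is observed. As an alternative to the $d$-invariant argument, one could instead apply Corollary \ref{cor:orthogonal_complement} to $-L(96,31)=L(96,65)$, whose negative definite filling $X(96,65)$ has linear plumbing $[2,2,12,2,2]$, and verify that no embedding of $Q_{X(96,65)}$ into $-\mathbb{Z}^6$ has orthogonal complement of square $-96$.
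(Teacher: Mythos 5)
Your proof is correct, but it takes a genuinely different route from the paper's. The paper disposes of all three types uniformly with the lattice-embedding obstruction (Corollary \ref{cor:Donaldson}/Corollary \ref{cor:orthogonal_complement}): it enumerates, up to automorphism, the embeddings of $Q_{X(15,11)}\oplus Q_{X(10,1)}$ and $Q_{X(42,29)}\oplus Q_{X(7,1)}$ into $-\mathbb{Z}^7$ and of $Q_{X(96,65)}$ into $-\mathbb{Z}^6$, and checks that in each case the orthogonal complement is generated by a vector of square $-6$, rather than the required $-150$, $-294$, $-96$. Your treatment of $A_2(2,2)A_9$ and $A_5(2,2)A_6$ is a genuine simplification: the determinants are $15,10$ (with $\gcd$ equal to $5$) and $42,7$ (with $\gcd$ equal to $7$), so Lemma \ref{lem:canonical_divisor}(1) --- equivalently the cyclicity statement of Corollary \ref{cor:linking_form} --- already forbids these types; this also reveals that these two entries of Table \ref{tab:D_index3_case4} fail the coprimality constraint that the paper enforces in the other cases, so the Donaldson argument is redundant for them. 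For $A_{11}(2,2)$ your spin $d$-invariant argument is valid and the computation checks out: the spin structures of $L(96,31)$ sit at $i=15$ and $i=63$, and the recursion (\ref{eq:d-invariant}) through $L(31,3)$ and $L(3,1)$ gives $d=-\tfrac{5}{4}$ and $d=\tfrac{11}{4}$, neither equal to the value $\tfrac{1}{4}$ demanded by Corollary \ref{cor:spin_d_2} (which applies since $96$ is even and there is a single singularity, so $Z^0=W^0$). Your side remarks are also accurate: the spin $d$-invariants of the two reducible types can be made to sum to $\tfrac{1}{4}$ (indeed $d(L(15,4),9)=\tfrac{1}{2}$ with $-\tfrac{1}{4}$ for $A_9$, and $d(L(42,13),6)=-\tfrac{5}{4}$ with $\tfrac{3}{2}$ for $A_6$), so coprimality really is the essential obstruction there, and the alternative embedding argument you sketch for $A_{11}(2,2)$ is exactly the paper's. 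In sum, your route trades the delicate step of the paper's proof --- verifying that the displayed lattice embeddings are the only ones up to automorphism --- for elementary arithmetic on two types and a short $d$-invariant recursion on the third, while the paper's route has the merit of being uniform across all three types and of the same shape as the arguments used elsewhere in Section \ref{sec:index3_case}.
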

\begin{proof} We apply the lattice embedding condition from Corollary \ref{cor:Donaldson}, as in the proof of Lemma \ref{lem:index3_Donaldson}. The link of a singularity of type $A_n(2,2)$ ($n\geq 2$) is $L(9n-3,3n-2)$, and $-L(9n-3,3n-2)$ bounds the negative definite 4-manifold $X(9n-3,6n-1)$. The Hirzebruch-Jung continued fraction of $\displaystyle\frac{9n-3}{6n-1}$ is given by $[2,2,n+1,2,2]$.

(1) Type $A_2(2,2)A_9$: Up to an automorphism of $-\mathbb{Z}^7$, there are exactly two distinct embeddings $Q_{X(15,11)}\oplus Q_{X(10,1)}\hookrightarrow -\mathbb{Z}^7$. One such embedding, $\iota_1$, is shown in Figure \ref{fig:embedding_A_2(2,2)A_9}, and the other embedding $\iota_2$ is obtained by defining $\iota_2(v,w)=\iota_1(v,-w)$. In both cases, the orthogonal complement is generated by the vector $e_2+\cdots+e_6-e_7$, which has square $-6\neq -150$.

\begin{figure}[!th]
    \centering
\begin{tikzpicture}[scale=1]
\draw (-4,0) node[circle, fill, inner sep=1.2pt, black]{};
\draw (-2,0) node[circle, fill, inner sep=1.2pt, black]{};
\draw (0,0) node[circle, fill, inner sep=1.2pt, black]{};
\draw (2,0) node[circle, fill, inner sep=1.2pt, black]{};
\draw (4,0) node[circle, fill, inner sep=1.2pt, black]{};
\draw (0,-1.5) node[circle, fill, inner sep=1.2pt, black]{};

\draw (-4,0) node[above]{$-2$};
\draw (-2,0) node[above]{$-2$};
\draw (0,0) node[above]{$-3$};
\draw (2,0) node[above]{$-2$};
\draw (4,0) node[above]{$-2$};
\draw (0,-1.5) node[above]{$-10$};
\draw (-4,0)--(4,0);

\draw (-4,0) node[below]{$-e_2+e_3$};
\draw (-2,0) node[below]{$e_2-e_4$};
\draw (0,0) node[below]{$e_1+e_4-e_5$};
\draw (2,0) node[below]{$e_5-e_6$};
\draw (4,0) node[below]{$e_6+e_7$};
\draw (0,-1.5) node[below]{$2e_1-e_2-e_3-e_4+e_5+e_6-e_7$};
\end{tikzpicture}
\caption{An embedding of $Q_{X(15,11)}\oplus Q_{X(10,1)}$ into $-\mathbb{Z}^7$.}
\label{fig:embedding_A_2(2,2)A_9}
\end{figure}

(2) Type $A_5(2,2)A_6$: Up to an automorphism of $-\mathbb{Z}^7$, there are exactly two distinct embeddings $Q_{X(42,29)}\oplus Q_{X(7,1)}\hookrightarrow -\mathbb{Z}^7$. One such embedding, $\iota_1$, is shown in Figure \ref{fig:embedding_A_5(2,2)A_6}, and the other embedding, $\iota_2$, is obtained by defining $\iota_2(v,w)=\iota_1(v,-w)$. In both cases, the orthogonal complement is generated by the vector $e_2+e_3-e_4+e_5+e_6-e_7$, which has square $-6\neq -294$. 
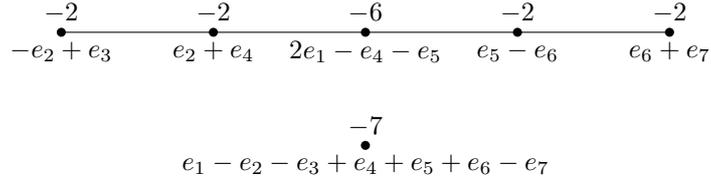
\begin{figure}[!th]
    \centering
\begin{tikzpicture}[scale=1]
\draw (-4,0) node[circle, fill, inner sep=1.2pt, black]{};
\draw (-2,0) node[circle, fill, inner sep=1.2pt, black]{};
\draw (0,0) node[circle, fill, inner sep=1.2pt, black]{};
\draw (2,0) node[circle, fill, inner sep=1.2pt, black]{};
\draw (4,0) node[circle, fill, inner sep=1.2pt, black]{};
\draw (0,-1.5) node[circle, fill, inner sep=1.2pt, black]{};

\draw (-4,0) node[above]{$-2$};
\draw (-2,0) node[above]{$-2$};
\draw (0,0) node[above]{$-6$};
\draw (2,0) node[above]{$-2$};
\draw (4,0) node[above]{$-2$};
\draw (0,-1.5) node[above]{$-7$};
\draw (-4,0)--(4,0);

\draw (-4,0) node[below]{$-e_2+e_3$};
\draw (-2,0) node[below]{$e_2+e_4$};
\draw (0,0) node[below]{$2e_1-e_4-e_5$};
\draw (2,0) node[below]{$e_5-e_6$};
\draw (4,0) node[below]{$e_6+e_7$};
\draw (0,-1.5) node[below]{$e_1-e_2-e_3+e_4+e_5+e_6-e_7$};
\end{tikzpicture}
\caption{An embedding of $Q_{X(42,29)}\oplus Q_{X(7,1)}$ into $-\mathbb{Z}^7$.}
\label{fig:embedding_A_5(2,2)A_6}
\end{figure}

(3) Type $A_{11}(2,2)$: Up to an automorphism of $-\mathbb{Z}^6$, there is a unique embedding $Q_{X(96,65)}\hookrightarrow -\mathbb{Z}^6$, as shown in Figure \ref{fig:embedding_A_11(2,2)}. The orthogonal complement is generated by $e_1+e_2-e_3+e_4+e_5-e_6$, which has square $-6\neq -96$. \end{proof}

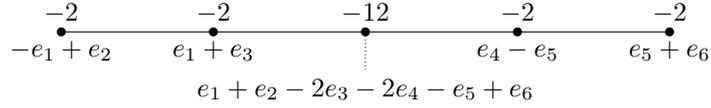
\begin{figure}[!th]
    \centering
\begin{tikzpicture}[scale=1]
\draw (-4,0) node[circle, fill, inner sep=1.2pt, black]{};
\draw (-2,0) node[circle, fill, inner sep=1.2pt, black]{};
\draw (0,0) node[circle, fill, inner sep=1.2pt, black]{};
\draw (2,0) node[circle, fill, inner sep=1.2pt, black]{};
\draw (4,0) node[circle, fill, inner sep=1.2pt, black]{};

\draw (-4,0) node[above]{$-2$};
\draw (-2,0) node[above]{$-2$};
\draw (0,0) node[above]{$-12$};
\draw (2,0) node[above]{$-2$};
\draw (4,0) node[above]{$-2$};
\draw (-4,0)--(4,0);
\draw[densely dotted] (0,0)--(0,-0.5);

\draw (-4,0) node[below]{$-e_1+e_2$};
\draw (-2,0) node[below]{$e_1+e_3$};
\draw (0,-0.5) node[below]{$e_1+e_2-2e_3-2e_4-e_5+e_6$};
\draw (2,0) node[below]{$e_4-e_5$};
\draw (4,0) node[below]{$e_5+e_6$};
\end{tikzpicture}
\caption{An embedding of $Q_{X(96,65)}$ into $-\mathbb{Z}^6$.}
\label{fig:embedding_A_11(2,2)}
\end{figure}

\begin{lemma} The types $A_1(2)E_8$ and $A_3(2,2)E_8$ do not occur as the singularity type of $S$.     
\end{lemma}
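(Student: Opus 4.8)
The plan is to reuse the spin $d$-invariant obstruction of Corollary \ref{cor:spin_d_2}, exactly as in the proof for the type $K_1E_8$. The first step is to check that the hypothesis of that corollary is met. Since the $E_8$ factor has $|H_1(L(E_8);\mathbb{Z})|=1$ while the index-$3$ factor contributes the link $L(6,1)$ (of order $6$) for $A_1(2)E_8$ and the link $L(24,7)=L(9\cdot 3-3,3\cdot 3-2)$ (of order $24$) for $A_3(2,2)E_8$, the product $\prod_{p}|H_1(L_p;\mathbb{Z})|$ is even in both cases. Hence, assuming such an $S$ exists with $H_1(S^0;\mathbb{Z})=0$, Corollary \ref{cor:spin_d_2} forces the existence of spin structures $\mathfrak{s}_p$ on the two links with $\sum_p d(L_p,\mathfrak{s}_p)=\tfrac14$.

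Next I would isolate the two contributions. By Example \ref{ex:d_invs_En}, the link $L(E_8)$ carries a unique spin structure, with $d$-invariant $2$. Because this value is an integer, the constraint reduces to a single question: is $\tfrac14-2=-\tfrac74$ a spin $d$-invariant of the index-$3$ link? To answer this I would identify the spin structures using the criterion that they correspond to the integers among $(q-1)/2$ and $(p+q-1)/2$, and then evaluate the $d$-invariants through the recursive formula \eqref{eq:d-invariant}. For $L(6,1)$, both $0$ and $3$ are integers and the recursion terminates at $S^3$ in one step, so I expect the two spin $d$-invariants to be $-\tfrac54$ and $\tfrac14$. For $L(24,7)$, the spin structures are $i=3$ and $i=15$, and the recursion must be iterated through $L(7,3)$ and $L(3,1)$; I expect the two spin $d$-invariants to come out as $-\tfrac54$ and $\tfrac34$.

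Finally I would add the $E_8$ contribution of $2$ to each possibility and compare with the required value $\tfrac14$. For $A_1(2)E_8$ the achievable totals are $-\tfrac54+2=\tfrac34$ and $\tfrac14+2=\tfrac94$; for $A_3(2,2)E_8$ they are $-\tfrac54+2=\tfrac34$ and $\tfrac34+2=\tfrac{11}{4}$. Since none of these equals $\tfrac14$, the constraint of Corollary \ref{cor:spin_d_2} cannot be satisfied, and neither singularity type can occur.

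The only genuinely laborious point is the iterated Hirzebruch--Jung evaluation of $d(L(24,7),i)$ for the two relevant $i$; everything else is a direct citation of the already-established corollary and of Example \ref{ex:d_invs_En}. The one subtlety worth watching is the spin-structure bookkeeping: Corollary \ref{cor:spin_d_2} constrains only \emph{spin} structures, so I must be careful to read off the $d$-invariants precisely at the indices given by $(q-1)/2$ and $(p+q-1)/2$ rather than at an arbitrary spin$^c$ structure.
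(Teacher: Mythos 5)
Your proposal is correct and follows the same route as the paper: both apply the spin $d$-invariant obstruction of Corollary \ref{cor:spin_d_2}, computing the spin $d$-invariants $-\tfrac{5}{4},\tfrac{1}{4}$ of $L(6,1)$ and $-\tfrac{5}{4},\tfrac{3}{4}$ of $L(24,7)$ via the recursion \eqref{eq:d-invariant} and noting that adding the $E_8$ contribution of $2$ can never yield $\tfrac14$. Your values agree with the paper's, and your extra care about checking the evenness hypothesis and reading off only genuine spin (not arbitrary spin$^c$) structures is exactly the bookkeeping the paper leaves implicit.
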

\begin{proof} The link $L(6,1)$ of a singularity of type $A_1(2)$ has two spin structures with $d$-invariants $-{5}/{4}$ and ${1}/{4}$, and the link $L(24,7)$ of a singularity of type $A_3(2,2)$ has two spin structures with $d$-invariants $-{5}/{4}$ and ${3}/{4}$ (see formula (\ref{eq:d-invariant})). On the other hand, the link $L(E_8)$ of a singularity of type $E_8$ has a unique spin structure with $d$-invariant of $2$ (Table \ref{tab:d_invs}). Therefore, the condition from Corollary \ref{cor:spin_d_2} is not satisfied in either case. 
\end{proof}

Hence, the remaining types in Case $4$ are the following five: 
%$A_1(2)A_6$, $A_1(2)$, $A_2(2,2)E_8$, $A_2(2,2)A_3$, and $A_6(2,2)$. 
\[A_1(2)A_6, \quad A_1(2), \quad A_2(2,2)E_8, \quad A_2(2,2)A_3, \quad A_6(2,2).\]
\medskip

\textbf{Case 5:} Suppose that the unique singularity of index 3 in $S$ is of type $D_n(1)$ $(n\geq 4)$. For even $n$, the first homology group $H_1(L(D_n(1));\mathbb{Z})$ of the link $L(D_n(1))$ of a singularity of type $D_n(1)$ is $\mathbb{Z}_6\oplus \mathbb{Z}_2$ \cite[Satz 2.11]{Brieskorn-1968}, which is not cyclic. Thus, by Corollary \ref{cor:linking_form}, $S$ cannot have a singularity of type $D_n(1)$ for even $n$. In particular, we may assume $n\geq 5$, in which case $K_S^2=9+\frac{2}{3}-L>0$ and $L\leq 9$. Since the numbers $|\det(R_p)|$ are pairwise relatively prime, it follows that the singularity type of $S$ is one of the following four: $D_5(1)A_4$, $D_5(1)$, $D_7(1)$, and $D_9(1)$. However, $D$ is equal to $2^3\cdot 5$, $2^3\cdot 7$, $2^5$, and $2^3$, respectively, meaning that $D$ is a square number for any of these types. Hence, Case $5$ does not occur.
\medskip

\textbf{Case 6:} Suppose that the unique singularity of index $3$ in $S$ is of type $D_n(2)$ $(n\geq 4)$. For even $n$, the first homology group $H_1(L(D_n(2));\mathbb{Z})=\mathbb{Z}_6\oplus \mathbb{Z}_2$ is not cyclic, so $S$ cannot have a singularity of type $D_n(2)$ for even $n$. We have $K_S^2=9+\frac{4}{3}-L>0$, so $L\leq 10$. The possible singularity type of $S$ is one of the following four: $D_5(2)A_4$, $D_5(2)$, $D_7(2)$, and $D_9(2)$. However, $D$ is equal to $2^4\cdot 5$, $2^6$, $2^3\cdot 5$, and $2^4$, respectively. Thus, $D$ is a square number only for the two types $D_5(2)$ and $D_9(2)$.

\begin{lemma} The type $D_9(2)$ does not occur as the singularity type of $S$.     
\end{lemma}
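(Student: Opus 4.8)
The plan is to obstruct $D_9(2)$ using the Heegaard Floer $d$-invariant test of Corollary \ref{cor:spin_d_2}, since the coarser tools do not suffice. In this case $S$ has the single singularity $D_9(2)$, and $|H_1(L(D_9(2));\mathbb{Z})|=12$ is even, so Corollary \ref{cor:spin_d_2} asserts that if such an $S$ existed there would be a spin structure $\mathfrak{s}$ on $L(D_9(2))$ with $d(L(D_9(2)),\mathfrak{s})=\tfrac14$. Thus it suffices to compute the $d$-invariants of the two spin structures on $L(D_9(2))$ and check that neither equals $\tfrac14$. First I would confirm that the linking-form test cannot help: inverting the plumbing matrix $B$ of $D_9(2)$ (read off from Table \ref{tab:noncyclic_singularities_of_index_3}) shows $H_1(L(D_9(2));\mathbb{Z})\cong\mathbb{Z}_{12}$ is cyclic with linking form $\left(\tfrac{1}{12}\right)$, which is exactly what Corollary \ref{cor:linking_form} and Lemma \ref{lem:linking_form} require, so this type survives the linking-form obstruction and the finer $d$-invariant is genuinely needed.

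The difficulty is that, unlike the links of $D_5$ and the $E_n$ (which are trefoil surgeries, Example \ref{ex:d_invs_En}), $L(D_9(2))$ is not a surgery on a knot: from its dual graph it is the Seifert fibered space over $S^2$ with three exceptional fibers of orders $2,2,19$, so neither the lens-space recursion \eqref{eq:d-invariant} nor the Ni--Wu formula applies. Instead I would compute directly from the negative definite resolution plumbing $X$, whose graph has a single bad vertex (the central $(-2)$-vertex, of valence three). This puts us in the range of the Ozsv\'ath--Szab\'o formula $d(\partial X,\mathfrak{t})=\max_{c}\frac{c^{2}+|V|}{4}$, where $|V|=9$, $c$ runs over characteristic covectors representing $\mathfrak{t}$, and $c^{2}=c^{T}B^{-1}c$. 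Because $B$ is an even form, $0$ is characteristic and the two spin structures are the self-conjugate classes, namely $0$ and the unique order-two element of $\mathbb{Z}_{12}$; tracing the cokernel relations shows the latter is represented by $e_{a}-e_{b}$, where $a,b$ are the two leaves of the fork.

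The key computation is then short. For the first spin structure, $c=0$ gives $d\geq\frac{0+9}{4}=\frac94$. For the second, taking $c=2(e_{a}-e_{b})$ and using $(e_{a}-e_{b})^{T}B^{-1}(e_{a}-e_{b})=-1$ yields $c^{2}=-4$, hence $d\geq\frac{-4+9}{4}=\frac54$. Since $d$ is a maximum over the spin class, both spin $d$-invariants are at least $\tfrac54$, so in particular neither equals $\tfrac14$. This contradicts Corollary \ref{cor:spin_d_2}, and therefore $D_9(2)$ cannot occur as the singularity type of $S$.

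The main obstacle I anticipate is precisely the unavailability of the surgery descriptions used elsewhere, which forces the plumbing computation; the crux is pinning down the correct characteristic covectors for the two spin structures and evaluating the relevant entries of $B^{-1}$ (equivalently the entries $[(-B)^{-1}]_{aa}$ and $[(-B)^{-1}]_{ab}$ appearing already in the linking-form check). Once suitable short representatives are in hand, no delicate maximization is required: it is enough to exhibit, in each spin class, one characteristic vector with $c^{2}>-8$, which immediately gives $d>\tfrac14$.
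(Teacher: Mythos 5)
Your proposal is correct, and it rests on exactly the same obstruction as the paper: Corollary \ref{cor:spin_d_2} applied to the single singularity $D_9(2)$, whose link has $|H_1|=12$ even, so some spin structure on $L(D_9(2))$ would have to have $d=\tfrac14$. The genuine difference is how the two spin $d$-invariants are obtained. The paper identifies $-L(D_9(2))$ with the Seifert manifold $\bigl(-1;\tfrac12,\tfrac12,\tfrac{3}{19}\bigr)$ and imports the values $-\tfrac54$ and $-\tfrac94$ from \cite[Table 3]{Doig-2015}; you compute from the $9$-vertex resolution plumbing instead. I checked your key claims and they hold: the matrix $B$ is even, so $0$ is characteristic; the central trivalent $(-2)$-vertex is the unique bad vertex; $2(e_a-e_b)=B(e_b-e_a)$, so $[e_a-e_b]$ is $2$-torsion in $\operatorname{coker}(B)$ and is nonzero (since $B^{-1}(e_a-e_b)$ is half-integral), which exhibits the second spin class; and the tree-minor computation gives $B^{-1}_{aa}=B^{-1}_{bb}=-\tfrac{25}{12}$ and $B^{-1}_{ab}=-\tfrac{19}{12}$, whence $(e_a-e_b)^{T}B^{-1}(e_a-e_b)=-\tfrac{25}{12}+\tfrac{38}{12}-\tfrac{25}{12}=-1$ as you assert. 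Your lower bounds $\tfrac94$ and $\tfrac54$ therefore agree with (and, by sharpness of one-bad-vertex plumbings, equal) the values the paper quotes for $L(D_9(2))$. As for what each route buys: your version is self-contained and, as you note at the end, needs only the Ozsv\'ath--Szab\'o inequality $c^2+b_2(X)\le 4d(Y,\mathfrak{t})$ for negative-definite fillings (Theorem 9.6 of \cite{Ozsvath-Szabo-2003}, already in this paper's bibliography), so neither the sharpness theorem for plumbings nor Doig's Seifert-fibered computations are actually required; the paper's version is shorter on the page but leans on the external identification of $-L(D_9(2))$ as a Seifert manifold and on Doig's tables. A pleasant bonus of your formulation is its robustness: exhibiting in each spin class a single characteristic vector with $c^2>-8$ already forces $d>\tfrac14$, so no maximization (and hence no appeal to equality in the plumbing formula) is needed.
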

\begin{proof} Consider the link $L(D_9(2))$. Note that $H_1(L(D_9(2));\mathbb{Z})=\mathbb{Z}_{12}$, and in particular, $L(D_9(2))$ has exactly two distinct spin structures. Following the notation in \cite[p.672]{Doig-2015}, the orientation reversal $-L(D_9(2))$ is a Seifert manifold $\left(-1;\frac{1}{2},\frac{1}{2},\frac{3}{19}\right)$. For the two spin structures of $-L(D_9(2))$, the corresponding $d$-invariants are $-\frac{5}{4}$ and $-\frac{9}{4}$ \cite[Table 3]{Doig-2015}. Therefore, the condition from Corollary \ref{cor:spin_d_2} cannot be satisfied.    
\end{proof}

Hence, $D_5(2)$ is the only remaining type in Case 6.
\medskip

In conclusion, the singularity types that can occur for a $\mathbb{Q}$-homology $\mathbb{CP}^2$ of index $3$ with trivial first integral homology group are one of 18 types: 6 types containing $A_1(1)$, 2 types containing $A_n(1,1)$, 4 types containing $A_n(1,2)$, 5 types containing $A_1(2)$ or $A_n(2,2)$, and 1 type containing $D_n(2)$, as stated in Theorem \ref{thm:index3_list}.

\subsection{Realizations}\label{subsec:realization}
Here we provide realizations for the 18 types in Theorem \ref{thm:index3_list}, except for the two types $A_2(1,2)E_7$ and $A_2(2,2)E_8$. As mentioned earlier, the realizations of the $6$ types $A_1(1)E_8$, $A_1(1)D_7$, $A_1(1)A_6$, $A_1(1)A_4A_1$, $A_1(1)A_3$, and $A_1(1)$ are given in \cite[Appendix]{Zhang-1989}. The remaining 10 types are: 

\[A_6(1,1),\quad A_{10}(1,1),\quad A_2(1,2)A_4,\quad A_2(1,2)A_1,\quad A_4(1,2)A_1,\quad A_1(2)A_6,\] 
\[A_1(2),\quad A_2(2,2)A_3,\quad A_6(2,2),\quad D_5(2).\]
For each of these 10 types, the configuration of the exceptional locus $D$ in the minimal resolution $\tilde{S}$ is given in Table \ref{tab:configuration}. In each case, the minimal resolution is obtained by a sequence of blow-ups from a Hirzebruch surface.

\begin{table}[t]
    \centering
    \begin{tabular}{cc|cc}

\begin{tikzpicture}[scale=1]
\draw (-2,0)--(2,0) (1.5,-0.2)--(1.5,1.2) (1.7,1)--(0.3,1) (0.5,0.8)--(0.5,2.2) (1.7,2)--(0.3,2) (1.5,1.8)--(1.5,3);
\draw[densely dashed] (0.7,1.5)--(-0.7,1.5);
\draw (1,2) node[above]{$-3$};
\draw (1.5,0.5) node[right]{$-3$};
\draw (-1.5,0) node{$*$};
\end{tikzpicture}

& & &

\begin{tikzpicture}[scale=1.1]
\draw (-3.5,0)--(2,0)  (1.7,1)--(0.3,1) (0.5,0.8)--(0.5,2.2) (2.2,2)--(0.3,2) (1.5,1.8)--(1.5,3.2) (-3,-0.2)--(-3,1.2) (-3,1.8)--(-3,3.2) (-3.5,3)--(2,3) (-3.2,0.8)--(-1.8,1.7) (-3.2,2.2)--(-1.8,1.3);
\draw[densely dashed] (1.5,-0.2)--(1.5,1.2) (2,1.5)--(2,2.5) (-2.7,1.5)--(-1.8,0.5);
\draw (0.5,1.5) node[left]{$-3$};
\draw (-3,0.4) node[left]{$-3$};
\draw (-1,0) node{$*$};
\draw (-1,3) node{$*$};

\draw (0,0) node[above]{$D_1$};
\draw (-3,0.2) node[right]{$D_2$};
\draw (-2.9,0.9) node[right]{$D_3$};
\draw (-2.6,1.9) node[right]{$D_4$};
\draw (-3,2.6) node[right]{$D_5$};
\draw (-1.5,3) node[below]{$D_6$};
\draw (1.5,2.7) node[left]{$D_7$};
\draw (0.9,2) node[above]{$D_8$};
\draw (0.5,1.5) node[right]{$D_9$};
\draw (1,1) node[below]{$D_{10}$};
\draw (1.5,0.5) node[right]{$E_1$};
\draw (2,1.5) node[below]{$E_2$};
\draw (-1.9,0.5) node[right]{$E_3$};
\end{tikzpicture}

\\

$A_6(1,1)$ && & $A_{10}(1,1)$ 
\\ [-1em] &&& \\  
$\tilde{S}=\Sigma_2\#5\overline{\mathbb{CP}}^2$ & & & $\tilde{S}=\Sigma_2\#9\overline{\mathbb{CP}}^2$ \\
      \hline & & & \\ [-1em] &&& \\  

\begin{tikzpicture}[scale=0.8]
\draw (-2,0)--(2,0) (1.5,-0.2)--(1.5,1.2)  (0.5,0.8)--(0.5,2.2) (1.7,2)--(0.3,2) (1.5,1.8)--(1.5,3.2) (1.7,3)--(0.4,3);
\draw[densely dashed] (1.7,1)--(0.3,1);
\draw (1.5,0.5) node[left]{$-5$};
\draw (-1.5,0) node{$*$};
\end{tikzpicture}

&& &

\begin{tikzpicture}[scale=0.8]
\draw (-2,0)--(2,0) (1.5,-0.2)--(1.5,1.2)  (0.5,0.8)--(0.5,2.2) ;
\draw[densely dashed] (1.7,1)--(0.3,1);
\draw (-1,0) node[above]{$-5$};
\draw (-1.5,0) node{$*$};
\end{tikzpicture}

\\
$A_2(1,2)A_4$ && & $A_{2}(1,2)A_1$
\\ [-1em] &&& \\  
$\tilde{S}=\Sigma_2\#5\overline{\mathbb{CP}}^2$ & & & $\tilde{S}=\Sigma_5\#2\overline{\mathbb{CP}}^2$ \\
      \hline & & & \\ [-1em] &&& \\ 

\begin{tikzpicture}[scale=1]
\draw (-2,0)--(2,0) (1.5,-0.2)--(1.5,1.2) (1.7,1)--(-0.5,1) (0.5,0.8)--(0.5,2.2) (-0.1,2)--(-1.2,2);
\draw[densely dashed] (-0.3,0.8)--(-0.3,2.2);
\draw (1,1) node[below]{$-3$};
\draw (-1,0) node[above]{$-4$};
\draw (-1.5,0) node{$*$};
\end{tikzpicture}

&& &

\begin{tikzpicture}[scale=1]
\draw (-2,0.6)--(2,0.6) (1.5,0.5)--(1.5,1.4) (1.7,1.3)--(0.3,1.3) (0.5,1.2)--(0.5,2.1) (1.7,2)--(0.3,2) (1.5,1.9)--(1.5,2.6) (0.5,2.4)--(0.5,3);
\draw[densely dashed] (1.7,2.5)--(0.4,2.5);
\draw (0.5,2.7) node[left]{$-6$};
\draw (-1.5,0.6) node{$*$};
\end{tikzpicture}

\\
$A_4(1,2)A_1$ && & $A_{1}(2)A_6$
\\ [-1em] &&& \\  
$\tilde{S}=\Sigma_4\#4\overline{\mathbb{CP}}^2$ & & & $\tilde{S}=\Sigma_2\#6\overline{\mathbb{CP}}^2$ \\ 
      \hline & & & \\ [-1em] &&& \\ 

\begin{tikzpicture}[scale=0.8]
\draw (-2,0)--(2,0) ;
\draw (-1,0) node[above]{$-6$};
\draw (-1.5,0) node{$*$};
\end{tikzpicture}

& & &

\begin{tikzpicture}[scale=0.8]
\draw (-2,0)--(2,0) (1.5,-0.2)--(1.5,1.2)  (0.5,0.8)--(0.5,2.2) (1.7,2)--(0.3,2) (1.5,1.8)--(1.5,3);
\draw[densely dashed] (1.7,1)--(0.3,1);
\draw (1.5,0.5) node[right]{$-4$};
\draw (-1,0) node[above]{$-4$};
\draw (-1.5,0) node{$*$};
\end{tikzpicture}

\\
$A_1(2)$ && & $A_{2}(2,2)A_3$
\\ [-1em] &&& \\  
$\tilde{S}=\Sigma_6$ & & & $\tilde{S}=\Sigma_4\#4\overline{\mathbb{CP}}^2$ \\
      \hline & & & \\ [-1em] &&& \\ 

\begin{tikzpicture}[scale=0.8]
\draw (-1.5,0)--(2.5,0) (1.5,-0.2)--(1.5,1.2) (1.7,1)--(0.3,1) (0.5,0.8)--(0.5,2.2) (1.7,2)--(0.3,2) (1.5,1.8)--(1.5,3);
\draw[densely dashed] (1,2.5)--(1,1.5);
\draw (-0.5,0) node[above]{$-4$};
\draw (1.5,2.5) node[right]{$-4$};
\draw (-1,0) node{$*$};
\end{tikzpicture}

&& &

\begin{tikzpicture}[scale=0.8]
\draw (-2,0)--(2,0) (1.5,-0.2)--(1.5,1.2) (1.7,1)--(-0.5,1) (0.6,0.8)--(0.6,2.2)  (-0.3,0.8)--(-0.3,2.2);
\draw[densely dashed] (-0.1,2)--(-1.2,2);
\draw (-1,0) node[above]{$-4$};
\draw (-1.5,0) node{$*$};
\end{tikzpicture} \\
$A_6(2,2)$ && & $D_5(2)$ \\ [-1em] &&& \\  
$\tilde{S}=\Sigma_4\#5\overline{\mathbb{CP}}^2$ & & & $\tilde{S}=\Sigma_4\#4\overline{\mathbb{CP}}^2$ 
\end{tabular}
\caption{Configurations of the exceptional locus in the minimal resolution}   \label{tab:configuration}
\end{table}

For the configurations, we follow the convention in \cite{Zhang-1989}. A solid line represents a component of $D$. The self-intersection number of a $(-2)$-curve in $D$ is omitted. A dotted line in the configuration represents a $(-1)$-curve. A line with $*$ indicates that the corresponding curve is not contained in any fiber of the vertical $\mathbb{CP}^1$-fibration on the minimal resolution $\tilde{S}$. Additionally, $\Sigma_n$ denotes the Hirzebruch surface of degree $n$.

Now we show that, in each case, the smooth locus $S^0$ of the given surface $S$ is simply-connected. Except for the type $A_{10}(1,1)$, it follows directly from the configurations that, in each case, the smooth locus $S^0$ contains $\Sigma_n\setminus ((\textup{fiber}) \cup (\textup{section}))\cong \mathbb{C}^2$ as a Zariski open subset. Hence $S^0$ is simply-connected by a transversality argument.

Next, we handle the type $A_{10}(1,1)$. We first show that $H_1(S^0;\mathbb{Z})=0$. Since $b_1(S^0)=0$, it suffices to show that $H_1(S^0;\mathbb{Z})$ is torsion-free, or equivalently, that $H^2(S^0;\mathbb{Z})$ is torsion-free. Note that there is a short exact sequence \cite[Lemma 2(2)]{Miyanishi-Zhang-1988} \[
0\to H_2(D;\mathbb{Z})\to \textup{Pic}(\tilde{S})\to H^2(S^0;\mathbb{Z})\to 0.
\]

Let $\Phi\colon \tilde{S}\to \mathbb{CP}^1$ denote the vertical $\mathbb{CP}^1$-fibration (defined by the linear pencil $|D_5+D_4+2(D_3+E_3)+D_4+D_5|$) on the minimal resolution given in the configuration. Letting $F$ be a fiber of $\Phi$, we have linear equivalences \[
F\sim D_5+D_4+2(D_3+E_3)+D_4+D_5\sim D_7+2(D_8+E_2)+D_9+D_{10}+E_1
\]
and \[
D_6\sim D_1+2F-(D_5+D_4+D_3+E_3)-(D_{10}+2D_9+3D_7+5D_8+5E_2).
\]
Note that $\textup{Pic}(\tilde{S})(\cong H^2(\tilde{S};\mathbb{Z})\cong H_2(\tilde{S};\mathbb{Z}))$ is a free abelian group of rank $11$, with the following basis: \[
\{D_1,D_3,D_4,D_5,D_7,D_8,D_9,D_{10},F,E_2,E_3\}.
\]
In $\textup{Pic}(\tilde{S})/\langle D_1,\dots,D_{10}\rangle$, we have $F=2E_3$ and $2F=E_3+5E_2$. Hence, $H^2(S^0;\mathbb{Z})\cong \mathbb{Z}$, and it follows that $H_1(S^0;\mathbb{Z})=0$ (cf. the proof of \cite[Proposition 4.13]{Kojima-1999}).

Now we show that $\pi_1(S^0)=1$ using the argument in \cite{Zhang-1989}. Let $\sigma\colon\tilde{S}_1\to \tilde{S}$ be the blow-up at the point $P:=D_5\cap D_6$. Let $E_4:=\sigma^{-1}(P)$, and let $\tilde{D}_5$ and $\tilde{D}_6$ denote the proper transforms of $D_5$, $D_6$, respectively. Set \[F_0=D_1+\tilde{D}_5+2D_2+3D_4+5(D_3+E_3)\quad\text{and}\quad F_1=\tilde{D}_6+D_{10}+2D_9+3D_7+5(D_8+E_2).\] Then, $|F_0|$ defines a $\mathbb{CP}^1$-fibration $\varphi\colon\tilde{S}_1\to \mathbb{CP}^1$ with precisely two singular fibers, $F_0$ and $F_1$. All components of $\sigma^{-1}(D)$, except $E_4$, are contained in the singular fibers of $\varphi$. Note that $E_1$ and $E_4$ are sections of $\varphi$. 

Let $\tau\colon\tilde{S}_1\to \Sigma_0$ be the contraction of the curves in $F_0$ and $F_1$ except $D_1$ and $\tilde{D}_6$. Then $\tau(\sigma^{-1}(D))$ is the union $\tau(F_0)\cup \tau(F_1)\cup \tau(E_4)$. Therefore, \[
\tilde{S}\setminus (D\cup E_2\cup E_3)=\Sigma_0\setminus \tau(\sigma^{-1}(D))\cong \mathbb{C}\times \mathbb{C}^*,\]
where $\mathbb{C}^*=\mathbb{C}\setminus \{0\}$. That is, $S^0$ contains $\mathbb{C}\times \mathbb{C}^*$ as a Zariski open subset, implying that $\pi_1(S^0)$ is a quotient group of $\mathbb{Z}$. Since we already know that $H_1(S^0;\mathbb{Z})=0$, we conclude that $\pi_1(S^0)=1$.

\begin{remark} Note that, in each case except for the two types $A_{10}(1,1)$ and $D_5(2)$, one can also establish the simple connectivity of the smooth locus using the topological argument provided in the proof of \cite[Theorem 3]{Lee-Park-2007}. \end{remark}

\clearpage

\bibliography{references}{}

\begin{thebibliography}{AMP22}

\bibitem[AMP22]{AMP-2022}
Paolo Aceto, Duncan McCoy, and JungHwan Park.
\newblock Definite fillings of lens spaces, 2022.
\newblock arXiv:2208.02586.

\bibitem[AN88]{Alexeev-Nikulin-1988}
V.~A. Alexeev and V.~V. Nikulin.
\newblock Classification of del {P}ezzo surfaces with log-terminal
  singularities of index {$\leq 2$}, involutions on {$K3$} surfaces, and
  reflection groups in {L}obachevski\u i\ spaces.
\newblock In {\em Lectures in mathematics and its applications, {V}ol. 2, {N}o.
  2 ({R}ussian)}, pages 51--150. Ross. Akad. Nauk, Inst. Mat. im. Steklova
  (MIAN), Moscow, 1988.

\bibitem[AN89]{Alexeev-Nikulin-1989}
V.~A. Alexeev and V.~V. Nikulin.
\newblock Classification of del {P}ezzo surfaces with log-terminal
  singularities of index {$\le 2$} and involutions on {$K3$} surfaces.
\newblock {\em Dokl. Akad. Nauk SSSR}, 306(3):525--528, 1989.

\bibitem[AN06]{Alexeev-Nikulin-2006}
V.~A. Alexeev and V.~V. Nikulin.
\newblock {\em Del {P}ezzo and {$K3$} surfaces}, volume~15 of {\em MSJ
  Memoirs}.
\newblock Mathematical Society of Japan, Tokyo, 2006.

\bibitem[Bel08]{Belousov-2008}
G.~N. Belousov.
\newblock Del {P}ezzo surfaces with log terminal singularities.
\newblock {\em Mat. Zametki}, 83(2):170--180, 2008.

\bibitem[Bri68]{Brieskorn-1968}
Egbert Brieskorn.
\newblock Rationale {S}ingularit\"aten komplexer {F}l\"achen.
\newblock {\em Invent. Math.}, 4:336--358, 1967/68.

\bibitem[CP21]{Choe-Park-2021}
Dong~Heon Choe and Kyungbae Park.
\newblock Spherical 3-manifolds bounding rational homology balls.
\newblock {\em Michigan Math. J.}, 70(2):227--261, 2021.

\bibitem[Doi15]{Doig-2015}
Margaret~I. Doig.
\newblock Finite knot surgeries and {H}eegaard {F}loer homology.
\newblock {\em Algebr. Geom. Topol.}, 15(2):667--690, 2015.

\bibitem[Don83]{Donaldson-1983}
Simon~K. Donaldson.
\newblock An application of gauge theory to four-dimensional topology.
\newblock {\em J. Differential Geom.}, 18(2):279--315, 1983.

\bibitem[Don87]{Donaldson-1987}
Simon~K. Donaldson.
\newblock The orientation of {Y}ang-{M}ills moduli spaces and {$4$}-manifold
  topology.
\newblock {\em J. Differential Geom.}, 26(3):397--428, 1987.

\bibitem[Fr{\o}96]{Froyshov-1996}
Kim~A. Fr{\o}yshov.
\newblock The {S}eiberg-{W}itten equations and four-manifolds with boundary.
\newblock {\em Math. Res. Lett.}, 3(3):373--390, 1996.

\bibitem[Fur86]{Furushima-1986}
Mikio Furushima.
\newblock Singular del {P}ezzo surfaces and analytic compactifications of
  {$3$}-dimensional complex affine space {${\bf C}^3$}.
\newblock {\em Nagoya Math. J.}, 104:1--28, 1986.

\bibitem[GPZ02]{Gurjar-Pradeep-Zhang-2002}
R.~V. Gurjar, C.~R. Pradeep, and D.-Q. Zhang.
\newblock On {G}orenstein surfaces dominated by {${\bf P}^2$}.
\newblock {\em Nagoya Math. J.}, 168:41--63, 2002.

\bibitem[GS99]{Gompf-Stipsicz-1999}
Robert~E. Gompf and Andr\'{a}s~I. Stipsicz.
\newblock {\em {$4$}-manifolds and {K}irby calculus}, volume~20 of {\em
  Graduate Studies in Mathematics}.
\newblock American Mathematical Society, Providence, RI, 1999.

\bibitem[HK11a]{Hwang-Keum-2011-1}
DongSeon Hwang and JongHae Keum.
\newblock Algebraic {M}ontgomery-{Y}ang problem: the non-cyclic case.
\newblock {\em Math. Ann.}, 350(3):721--754, 2011.

\bibitem[HK11b]{Hwang-Keum-2011-2}
DongSeon Hwang and JongHae Keum.
\newblock The maximum number of singular points on rational homology projective
  planes.
\newblock {\em J. Algebraic Geom.}, 20(3):495--523, 2011.

\bibitem[HK12]{Hwang-Keum-2012}
DongSeon Hwang and JongHae Keum.
\newblock Construction of singular rational surfaces of {P}icard number one
  with ample canonical divisor.
\newblock {\em Proc. Amer. Math. Soc.}, 140(6):1865--1879, 2012.

\bibitem[HK13]{Hwang-Keum-2013}
DongSeon Hwang and JongHae Keum.
\newblock Algebraic {M}ontgomery-{Y}ang problem: the nonrational surface case.
\newblock {\em Michigan Math. J.}, 62(1):3--37, 2013.

\bibitem[HK14]{Hwang-Keum-2014}
DongSeon Hwang and JongHae Keum.
\newblock Algebraic {M}ontgomery-{Y}ang problem: the log del {P}ezzo surface
  case.
\newblock {\em J. Math. Soc. Japan}, 66(4):1073--1089, 2014.

\bibitem[HKO15]{Hwang-Keum-Ohashi-2015}
DongSeon Hwang, JongHae Keum, and Hisanori Ohashi.
\newblock Gorenstein {$\Bbb{Q}$}-homology projective planes.
\newblock {\em Sci. China Math.}, 58(3):501--512, 2015.

\bibitem[Hwa]{Hwang-personal}
DongSeon Hwang.
\newblock {A} personal communication.

\bibitem[JPP24]{Jo-Park-Park-2024}
Woohyeok Jo, Jongil Park, and Kyungbae Park.
\newblock Algebraic {M}ontgomery-{Y}ang problem and smooth obstructions, 2024.
\newblock arXiv:2402.04569, to appear in Trans. Amer. Math. Soc.

\bibitem[Keu18]{Keum-2018}
JongHae Keum.
\newblock Algebraic surfaces with minimal {B}etti numbers.
\newblock In {\em Proceedings of the {I}nternational {C}ongress of
  {M}athematicians---{R}io de {J}aneiro 2018. {V}ol. {II}. {I}nvited lectures},
  pages 699--718. World Sci. Publ., Hackensack, NJ, 2018.

\bibitem[KM99]{KeM-1999}
Se\'{a}n Keel and James McKernan.
\newblock Rational curves on quasi-projective surfaces.
\newblock {\em Mem. Amer. Math. Soc.}, 140(669):viii+153, 1999.

\bibitem[KNS89]{KoNS-1989}
Ryoichi Kobayashi, Shu Nakamura, and Fumio Sakai.
\newblock A numerical characterization of ball quotients for normal surfaces
  with branch loci.
\newblock {\em Proc. Japan Acad. Ser. A Math. Sci.}, 65(7):238--241, 1989.

\bibitem[Koj99]{Kojima-1999}
Hideo Kojima.
\newblock Open rational surfaces with logarithmic {K}odaira dimension zero.
\newblock {\em Internat. J. Math.}, 10(5):619--642, 1999.

\bibitem[Koj03]{Kojima-2003}
Hideo Kojima.
\newblock Rank one log del {P}ezzo surfaces of index two.
\newblock {\em J. Math. Kyoto Univ.}, 43(1):101--123, 2003.

\bibitem[Kol08]{Kollar-2008}
J\'{a}nos Koll\'{a}r.
\newblock Is there a topological {B}ogomolov-{M}iyaoka-{Y}au inequality?
\newblock {\em Pure Appl. Math. Q.}, 4(2):203--236, 2008.

\bibitem[Lac24]{Lacini-2024}
Justin Lacini.
\newblock On rank one log del {P}ezzo surfaces in characteristic different from
  two and three.
\newblock {\em Advances in Mathematics}, 442:109568, 2024.

\bibitem[Lis07]{Lisca-2007}
Paolo Lisca.
\newblock Lens spaces, rational balls and the ribbon conjecture.
\newblock {\em Geom. Topol.}, 11:429--472, 2007.

\bibitem[LMV19]{Lidman-Moore-Vazquez-2019}
Tye Lidman, Allison~H. Moore, and Mariel Vazquez.
\newblock Distance one lens space fillings and band surgery on the trefoil
  knot.
\newblock {\em Algebr. Geom. Topol.}, 19(5):2439--2484, 2019.

\bibitem[LP07]{Lee-Park-2007}
Yongnam Lee and Jongil Park.
\newblock A simply connected surface of general type with {$p_g=0$} and
  {$K^2=2$}.
\newblock {\em Invent. Math.}, 170(3):483--505, 2007.

\bibitem[LS07]{Lisca-Stipsicz-2007}
Paolo Lisca and Andr\'as~I. Stipsicz.
\newblock Ozsv\'ath-{S}zab\'o{} invariants and tight contact 3-manifolds.
  {III}.
\newblock {\em J. Symplectic Geom.}, 5(4):357--384, 2007.

\bibitem[Meg99]{Megyesi-1999}
G.~Megyesi.
\newblock Generalisation of the {B}ogomolov-{M}iyaoka-{Y}au inequality to
  singular surfaces.
\newblock {\em Proc. London Math. Soc. (3)}, 78(2):241--282, 1999.

\bibitem[Miy84]{Miyaoka-1984}
Yoichi Miyaoka.
\newblock The maximal number of quotient singularities on surfaces with given
  numerical invariants.
\newblock {\em Math. Ann.}, 268(2):159--171, 1984.

\bibitem[MZ88]{Miyanishi-Zhang-1988}
M.~Miyanishi and D.-Q. Zhang.
\newblock Gorenstein log del {P}ezzo surfaces of rank one.
\newblock {\em J. Algebra}, 118(1):63--84, 1988.

\bibitem[NW15]{Ni-Wu-2015}
Yi~Ni and Zhongtao Wu.
\newblock Cosmetic surgeries on knots in {$S^3$}.
\newblock {\em J. Reine Angew. Math.}, 706:1--17, 2015.

\bibitem[Né05]{Nemethi-2005}
Andr\'as Némethi.
\newblock On the {O}zsv\'ath-{S}zab\'o{} invariant of negative definite plumbed
  3-manifolds.
\newblock {\em Geom. Topol.}, 9:991--1042, 2005.

\bibitem[OS03]{Ozsvath-Szabo-2003}
Peter Ozsv\'{a}th and Zolt\'{a}n Szab\'{o}.
\newblock Absolutely graded {F}loer homologies and intersection forms for
  four-manifolds with boundary.
\newblock {\em Adv. Math.}, 173(2):179--261, 2003.

\bibitem[OS05a]{Ozsvath-Szabo-2005-2}
Peter Ozsv\'ath and Zolt\'an Szab\'o.
\newblock On the {H}eegaard {F}loer homology of branched double-covers.
\newblock {\em Adv. Math.}, 194(1):1--33, 2005.

\bibitem[OS05b]{Ozsvath-Szabo-2005}
Peter Ozsváth and Zoltán Szabó.
\newblock On knot {F}loer homology and lens space surgeries.
\newblock {\em Topology}, 44(6):1281--1300, 2005.

\bibitem[OT12]{Ohashi-Taki-2012}
Hisanori Ohashi and Shingo Taki.
\newblock {$K3$} surfaces and log del {P}ezzo surfaces of index three.
\newblock {\em Manuscripta Math.}, 139(3-4):443--471, 2012.

\bibitem[Sak80]{Sakai-1980}
Fumio Sakai.
\newblock Semistable curves on algebraic surfaces and logarithmic
  pluricanonical maps.
\newblock {\em Math. Ann.}, 254(2):89--120, 1980.

\bibitem[Sch23]{Schutt-2023}
Matthias Schütt.
\newblock Moduli of {G}orenstein $\mathbb{Q}$-homology projective planes.
\newblock {\em Journal of the Mathematical Society of Japan}, 75(1):329--366,
  2023.

\bibitem[Ue09]{Ue-2009}
Masaaki Ue.
\newblock The {F}ukumoto-{F}uruta and the {O}zsv\'ath-{S}zab\'o{} invariants
  for spherical 3-manifolds.
\newblock In {\em Algebraic topology---old and new}, volume~85 of {\em Banach
  Center Publ.}, pages 121--139. Polish Acad. Sci. Inst. Math., Warsaw, 2009.

\bibitem[Ye02]{Ye-2002}
Qiang Ye.
\newblock On {G}orenstein log del {P}ezzo surfaces.
\newblock {\em Japan. J. Math. (N.S.)}, 28(1):87--136, 2002.

\bibitem[Zha89]{Zhang-1989}
De-Qi Zhang.
\newblock Logarithmic del {P}ezzo surfaces with rational double and triple
  singular points.
\newblock {\em Tohoku Math. J. (2)}, 41(3):399--452, 1989.

\bibitem[Zha91]{Zhang-1991}
De-Qi Zhang.
\newblock Logarithmic {E}nriques surfaces.
\newblock {\em J. Math. Kyoto Univ.}, 31(2):419--466, 1991.

\end{thebibliography}
\bibliographystyle{alpha}
\end{document}